\documentclass[a4paper]{article}
\usepackage{amsmath,amsfonts,amssymb,amsthm}
\usepackage{graphicx}
\usepackage{tikz}
\usetikzlibrary{cd,positioning}
\usepackage{eucal}
\usepackage{makeidx}
\usepackage{stackrel}
\usepackage[style=ext-alphabetic,doi=false,url=false,isbn=false,giveninits=true,uniquename=init,
articlein=false, backref=false]{biblatex}\usepackage[dvipdfmx, colorlinks=true, allcolors= blue, bookmarks=true]{hyperref}\usepackage{cleveref}
\definecolor{blue}{HTML}{065FD4}
\renewbibmacro{in:}{}

\newbibmacro{string+doiurl}[1]{ \iffieldundef{doi}
 {\iffieldundef{url}
 {#1}
 {\href{\thefield{url}}{#1}}}
 {\href{https://doi.org/\thefield{doi}}{#1}}}
\makeatletter
\def\blx@driver#1{ \ifcsdef{blx@bbx@#1}
 {\usebibmacro{string+doiurl}{\csuse{blx@bbx@#1}}}}
\makeatother
\DeclareSourcemap{
 \maps[datatype=bibtex, overwrite]{
 \map{
 \step[fieldset=editor, null]
 }
 }
}
\ExecuteBibliographyOptions{useeditor=false}
\newtheoremstyle{mystyle} {} {} {\normalfont} {} {\bfseries} {} { } {}\theoremstyle{definition}
\newtheorem{definition}{Definition}[section]
\newtheorem{example}[definition]{Example}
\theoremstyle{plain}
\newtheorem{proposition}[definition]{Proposition}
\newtheorem{lemma}[definition]{Lemma}
\newtheorem{theorem}[definition]{Theorem}

\theoremstyle{definition}
\newtheorem{remark}[definition]{Remark}
\newtheorem*{remark*}{Remark}

\crefname{section}{Section}{Sections}
\crefname{definition}{Definition}{Definitions}
\crefname{proposition}{Proposition}{Propositions}
\crefname{lemma}{Lemma}{Lemmas}
\crefname{theorem}{Theorem}{Theorems}
\crefname{corollary}{Corollary}{Corollaries}
\crefname{remark}{Remark}{Remarks}
\crefname{example}{Example}{Examples}
\crefname{conjecture}{Conjecture}{Conjectures}
\crefname{notation}{Notation}{Notations}
\newcommand{\coker}{\mathop{\mathrm{coker}}\nolimits}
\newcommand\Hom{\mathop{\mathrm{Hom}}\nolimits}
\newcommand\stHom{\mathop{\underline{\mathrm{Hom}}}\nolimits}
\newcommand\prHom{\mathop{\mathrm{Hom}^{\mathrm{pr}}}\nolimits}
\newcommand\End{\mathop{\mathrm{End}}\nolimits}
\newcommand\soc{\mathop{\mathrm{soc}}\nolimits}
\newcommand\rad{\mathop{\mathrm{rad}}\nolimits}

\newcommand\op{^\mathrm{op}}
\newcommand{\Z}{\mathbb{Z}}
\newcommand\derb{\mathop{D^b}}
\newcommand\modcat{\mathop{\mathrm{mod}}\nolimits}
\newcommand\stmodcat{\mathop{\underline{\mathrm{mod}}}\nolimits}
\newcommand\homob{\mathop{K^b}}
\newcommand\projcat{\mathop{\mathrm{proj}}}
\newcommand\addcat{\mathop{\mathrm{add}}}
\newcommand\pathtohom[1]{\,{\vphantom{#1}}^{\sharp}\:\!\! #1}
\newcommand\image{\mathop{\mathrm{Im}}\nolimits}
\renewcommand\Im{\mathop{\mathrm{Im}}\nolimits}
\newcommand\Ker{\mathop{\mathrm{Ker}}\nolimits}
\newcommand\Coker{\mathop{\mathrm{Coker}}\nolimits}
\newcommand\syzygy[1][]{\mathop{\Omega^{\scriptstyle #1}}}\newcommand{\crefenum}[2]{ \namecref{#1}~\hyperref[#2]{\labelcref*{#1}~\ref*{#2}}}

\newcommand{\myfbox}[1]{\kern1pt\fbox{#1}\kern1pt}

\title{Two-sided tilting complexes for generalized Brauer tree algebras}
\author{Shuji Fujino \and Yuta Kozakai \and Kohei Takamura}
\date{}
\begin{document}
\maketitle
\begin{abstract}
  We explicitly construct two-sided tilting complexes corresponding to Membrillo-Hern\'{a}ndez's tree-to-star tilting complexes for generalized Brauer tree algebras.
\end{abstract}
\section{Introduction}
In the article, we deal with the algebras called generalized Brauer tree algebras, which is symmetric and of tame representation type. These algebras generalize Brauer tree algebras and are Brauer graph algebras associated to tree-shaped Brauer graphs.

Let $k$ be an algebraically closed field. Let $A$ be a generalized Brauer tree algebra over $k$ associated to a generalized Brauer tree $\mathcal T=(V,E,m,\rho)$, where $V$ denotes the set of vertices, $E$ the set of edges, $m$ the multiplicities and $\rho$ cyclic orders. Fix a vertex $v_0\in V$.

If the multiplicity at any vertex except for $v_0$ is equal to $1$, the algebra $A$ is known as a Brauer tree algebra with an exceptional vertex $v_0$, which is of finite representation type.
Brauer tree algebras are appeared in the modular representation theory of finite groups as the blocks of group algebras with cyclic defect groups. There are many studies on these algebras. For the Brauer tree algebra $A$, Rickard showed that there is a tilting complex $T_0$ of $A$-modules whose endomorphism ring is isomorphic to a Brauer tree algebra associated to a star-shaped Brauer tree which has the same multiplicity at the center of the star as that of $v_0$ and has the same number of edges as those of $\mathcal T$ (\cite[Theorem 4.2]{MR1027750}). By using this fact, he proved that any two Brauer tree algebras are derived equivalent if and only if they have the same number of edges and have the same multiplicity at their exceptional vertices.

In general, for a finite dimensional algebra $\Gamma$ and a tilting complex $P^{\bullet}$ of $\Gamma$-modules, there exists a derived equivalence $\derb(\Lambda) \to \derb(\Gamma)$, where we put $\Lambda:= \End_{\homob(\projcat\Lambda)}(P^{\bullet})$, which is known as Rickard's theorem (\cite{Ricardtheorem}). The functor constructed in the article is designed for theoretical purposes and may be difficult for practical calculations.
Then, Keller and Rickard showed that in the same setting, there exists a two-sided tilting complex $C$ of $(\Lambda, \Gamma)$-bimodules whose restriction to $\Gamma$ is isomorphic to $P^{\bullet}$ in the derived category of $\Gamma$-modules (\cite{Keller1993,Rickard1991}). Moreover, the complex $C$ gives a left derived functor ${-} \otimes_{\Lambda}^{\mathbb L} C: \derb{(\Lambda)}\to\derb{(\Gamma)}$ of ${-} \otimes_\Lambda C$. The constructions of such two-sided tilting complexes may be difficult to calculate images of complexes through the functor.
Therefore, Kozakai-Kunugi gave the explicit description of a two-sided tilting complex for a Rickard's tree-to-star complex $T_0$ of a Brauer tree algebra $A$ by using a bimodule giving a stable equivalence of Morita type between $A$ and the endomorphism ring of the Rickard's tree-to-star complex(\cite{MR3895203}).

On the other hand, in the situation that $A$ is a generalized Brauer tree algebra not necessarily a Brauer tree algebra, which is of tame representation type but not necessarily of finite representation type, Membrillo-Hern\'{a}ndez showed that there is a tilting complex $T$ of $A$-modules whose endomorphism ring which we denote by $B$ is isomorphic to a generalized Brauer tree algebra associated to a star-shaped generalized Brauer tree with the same number of edges and with the same multiplicities in some order determined by Green walk on the edges (\cite[Theorem 7.4]{MEMBRILLOHERNANDEZ1997231}). By using this fact, he proved that any two generalized Brauer tree algebras are derived equivalent if and only if they have the same number of edges and the set of multiplicities (\cite[Theorem 9.7]{MEMBRILLOHERNANDEZ1997231}). The fact is a generalization of the one made in \cite{MR1027750}.

By analogy, we naturally expect that the method given in \cite{MR3895203} can be applied to construct a two-sided tilting complex corresponding to the Membrillo-Hern\'{a}ndez's tree-to-star tilting complex. However, the proofs Kozakai-Kunugi gave depends on the fact that given a simple $B$-module $V$, any simple $B$-module is isomorphic to $\syzygy[2\ell] V$ for some $\ell\in\mathbb Z$. This does not hold for generalized Brauer tree algebras in general.
Therefore, in this paper, we aim to explicitly construct a two-sided tilting complex for a Membrillo-Hern\'{a}ndez's tree-to-star complex $T$ of a generalized Brauer tree algebra $A$.
After all, the essential parts of proofs in this article are done by perverse equivalences which is introduced in \cite{Perverse}. We prove that a Membrillo-Hern\'{a}ndez's tilting complex $T$ induces a perverse equivalence. Moreover we calculated images of simple modules through the functor.

Now we briefly describe the two-sided tilting complex for a generalized Brauer tree algebra. Let $M$ denote some indecomposable $B^{\op}\otimes_k A$-module inducing a stable equivalence of Morita type.
By taking a minimal projective resolution of $M$ of $B^{\op}\otimes_k A$-module and deleting some direct summands of each term, we get a two-sided tilting complex $C$ of $B^{\op}\otimes_k A$-modules (see \cref{Cistwo-sided}, and \cref{s:main} for construction). Moreover, the restriction of $C$ to $A$ is isomorphic to $T$ in the derived category of $A$-modules (see \cref{thm:resisC}). The functor $G:={-}\otimes_B^{\mathbb L} C$ gives a derived equivalence $\derb(B)\to \derb(A)$. Furthermore, we have $G={-}\otimes_B C$ since $C$ is projective on either side by the construction of $C$. The functor $G$ is explicit and easy for us to deal with.

The organization of the article is as follows.

In \cref{s:notation}, we give the notation for the article.
In \cref{s:tilting}, we recall the basic facts for tilting complexes. The facts for stable equivalences of Morita type are collected in \cref{s:stable}. In \cref{s:perverse}, we introduce the notion of perverse equivalences for symmetric algebras. \cref{s:graphs,s:gbta,s:modules} are devoted to introduce the terminology for generalized Brauer tree and describe how to construct generalized Brauer tree algebras from generalized Brauer tree and their representation.
In \cref{s:main}, we construct the complex of $(B,A)$-bimodules and we give the theorems and proofs for the complex.
Finally, in \cref{s:examples} we give an example of the theorems in \cref{s:main}.

\section{Notation}\label{s:notation}
Throughout this paper, $\Gamma$ and $\Lambda$ mean finite dimensional indecomposable symmetric algebras over an algebraically closed field $k$. We denote by $\Gamma^{\op}$ the opposite algebra of $\Gamma$. We ensure that $\Gamma$-modules mean finitely generated right $\Gamma$-modules. We identify $(\Lambda,\Gamma)$-bimodules with $\Lambda^{\op}\otimes_k \Gamma$-modules.
Let $f:U\to V$ be a $\Gamma$-homomorphism. We denote the kernel of $f$ by $\Ker f$, the cokernel of $f$ by $\Coker f$ and the image of $f$ by $\Im f$. We denote the canonical monomorphism $\Ker f\to U$ by $\ker f$ and the canonical epimorphism $V\to \Coker f$ by $\coker f$.
Given a $\Gamma$-module $U$, we denote by $\rad(U)$ the radical of $U$, by $\soc(U)$ the socle of $U$, by $P(U)$ a projective cover of $U$, by $I(U)$ an injective hull of $U$, by $\syzygy U$ the kernel of a projective cover of $U$, and by $\syzygy[-1] U$ the cokernel of an injective hull of $U$. We define inductively $\syzygy[n+1] U=\syzygy (\syzygy[n] U)$ and $\syzygy[-n-1] U=\syzygy[-1](\syzygy[-n] U)$ for each integer $n\ge 1$ and a $\Gamma$-module $U$.
We denote by $U^{\ast}:=\Hom_{\Gamma}(U,\Gamma)$ the dual module of a $\Gamma$-module $U$. This is $\Gamma^{\op}$-module. We note that $V^{\ast}\otimes_k U$ is $(\Lambda,\Gamma)$-bimodule for a $\Gamma$-module $U$ and $\Lambda$-module $V$.
Let $\{C^n\}_{n\in\mathbb Z}$ be a set of $\Gamma$-modules indexed by $\Z$. Let $d^n$ be a $\Gamma$-homomorphism from $C^n$ to $C^{n+1}$, which satisfies $d^{n+1}\circ d^n=0$ for each integer $n$. Then $C=(C^n, d^n)_{n\in \Z}$ is a {complex} of $\Gamma$-modules. We say that a complex $C=(C^n, d^n)_{n\in \Z}$ is {bounded} if the set of $n$ such that $C^n$ is nonzero is finite.
Given complexes of $\Gamma$-modules $C=(C^n,d^n)$ and $D=(D^n, {d'}^n)$, we say that $f=(f^n)_{n\in \Z}$ is a {homomorphism of complexes} of $\Gamma$-modules from $C$ to $D$ if $f^n$ is a $\Gamma$-homomorphism from $C^n$ to $D^n$ satisfying $f^{n+1}\circ d^{n}={d'}^n\circ f^n$ for each integer $n$. We denote by $C[n]$ the $n$-shifted complexes of $C$ and by $H^n(C)$ the $n$th cohomology of $C$ for each $n$.

We denote the category of finitely generated right $\Gamma$-modules by $\modcat \Gamma$, the full subcategory of $\modcat \Gamma$ whose objects are finitely generated right projective $\Gamma$-modules by $\projcat \Gamma$,
the bounded homotopy category of $\projcat \Gamma$ by $\homob(\projcat \Gamma)$, the bounded derived category of $\modcat \Gamma$ by $\derb (\Gamma)$, and the stable category of $\modcat \Gamma$ by $\stmodcat{\Gamma}$.

Given a set $X$, we denote by $\delta_{ij}$ the Kronecker delta for $i, j \in X$ given by \[\delta_{ij}=\begin{cases}
    1 & \text{if $i=j$,}     \\
    0 & \text{if $i\neq j$}.
  \end{cases}\]

\section{Tilting complexes}\label{s:tilting}
Tilting complexes are important to consider derived equivalences. We describe them in this section.
Let $k$ be an algebraically closed field and $\Gamma$ and $\Lambda$ two finite dimensional indecomposable symmetric $k$-algebras. We say that $\Gamma$ and $\Lambda$ are derived equivalent if $\derb(\Gamma)$ and $\derb(\Lambda)$ are equivalent as triangulated categories.
\begin{definition}
  We call a bounded complex $T$ of projective $\Gamma$-modules a \emph{tilting complex} if the following conditions are satisfied:
  \begin{itemize}
    \item $\Hom_{\derb(\Gamma)}(T, T[n])=0$ for any nonzero integer $n$.
    \item The subcategory $\addcat(T)$ generates $\derb(\Gamma)$ as a triangulated category.
  \end{itemize}
\end{definition}
The second condition means that $\Gamma$ is obtained by applying a finite sequence of operations, including taking direct sums, direct summands, mapping cones, and shifts of $T$. We recall the definition of two-sided tilting complexes.
\begin{definition}[{\cite[Definition 3.4.]{Rickard1991}}]
  We call a bounded complex $C$ of $\Lambda^{\op}\otimes_k \Gamma$-modules a \emph{two-sided tilting complex} if
  ${-} \otimes_\Lambda^{\mathbb L} C$
  is an exact functor $\derb(\Lambda)\to \derb(\Gamma)$ which induces an equivalence.
\end{definition}
We note that ${-} \otimes_\Lambda^{\mathbb L} C$ is the left derived functor for ${-} \otimes_\Lambda C$. The following proposition holds.
\begin{proposition}[{\cite[Sec.~4]{Rickard1991}}]\label{omittingL}
  Let $C$ be a two-sided tilting complex of $\Lambda^{\op}\otimes_k \Gamma$-modules such that projective as $\Lambda^{\op}$-modules and $\Gamma$-modules, then ${-} \otimes_\Lambda^{\mathbb L} C$ is equivalent to ${-} \otimes_\Lambda C$ as a functor.
\end{proposition}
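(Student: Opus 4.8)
The plan is to show that $C$, viewed merely as a bounded complex of left $\Lambda$-modules, is flat enough to compute the derived tensor product directly, so that no resolution is needed. The single fact doing all the work is: if $N$ is an acyclic complex of right $\Lambda$-modules, then $N\otimes_\Lambda C$ is again acyclic. To see this, use the stupid (brutal) filtration $\sigma^{\ge i}C$ of the bounded complex $C$: it exhibits $N\otimes_\Lambda C$ as a finite iterated extension of the complexes $N\otimes_\Lambda C^i[-i]$. Since $C^i$ is projective as a $\Lambda^{\op}$-module, i.e. flat as a left $\Lambda$-module, each $N\otimes_\Lambda C^i$ is acyclic, and a finite extension of acyclic complexes is acyclic. (Equivalently, a bounded complex of flat modules is K-flat.)

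From this the proposition follows in two steps. First, $-\otimes_\Lambda C$ preserves quasi-isomorphisms: if $f\colon X\to Y$ is a quasi-isomorphism of complexes of right $\Lambda$-modules then its mapping cone is acyclic, hence $\mathrm{cone}(f)\otimes_\Lambda C\cong\mathrm{cone}(f\otimes\mathrm{id}_C)$ is acyclic by the previous paragraph, so $f\otimes\mathrm{id}_C$ is a quasi-isomorphism; therefore $-\otimes_\Lambda C$ descends to a well-defined functor $\derb(\Lambda)\to\derb(\Gamma)$. Second, for $X\in\derb(\Lambda)$ choose a quasi-isomorphism $p\colon P\to X$ with $P$ a bounded-above complex of projective $\Lambda$-modules, so that by the standard recipe for the left derived functor (\cite[Sec.~4]{Rickard1991}) one has $X\otimes_\Lambda^{\mathbb L}C=\mathrm{Tot}(P\otimes_\Lambda C)$, and $p\otimes\mathrm{id}_C$ is a natural transformation $X\otimes_\Lambda^{\mathbb L}C\to X\otimes_\Lambda C$ in $\derb(\Gamma)$. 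Since $p$ is a quasi-isomorphism, the first step shows $p\otimes\mathrm{id}_C$ is an isomorphism in $\derb(\Gamma)$, and naturality in $X$ is the usual bookkeeping with functoriality of projective resolutions.

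I do not expect a real obstacle here: the statement is the familiar principle that flat complexes compute the derived tensor product. The only points needing a little care are the boundedness bookkeeping, so that the total complexes and the finite filtration above are well-behaved, and checking that the comparison morphism $p\otimes\mathrm{id}_C$ is genuinely natural rather than merely an objectwise isomorphism; both are routine.
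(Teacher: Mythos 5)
The paper offers no proof of this proposition; it is cited directly to Rickard's paper, so there is no internal argument to compare yours against. Your proof is correct and is the standard one: a bounded complex of flat (here projective) left $\Lambda$-modules is K-flat, so the ordinary tensor functor already computes the left derived functor. The filtration step is sound because the brutal truncations $\sigma^{\ge i}C$ give degreewise split short exact sequences of graded modules, so tensoring with any $N$ preserves exactness before one ever invokes flatness; flatness of each $C^i$ then kills the associated graded pieces $N\otimes_\Lambda C^i[-i]$ when $N$ is acyclic, and the induction over the finitely many steps of the filtration closes. The comparison map $p\otimes\mathrm{id}_C$ is a quasi-isomorphism by your mapping-cone argument, and naturality is indeed the usual diagram chase with functorial choices of resolutions. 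Two small observations worth recording: you only use the hypothesis that $C$ is projective over $\Lambda^{\op}$ (i.e.\ flat on the left), not that $C$ is also right-projective nor that it is a tilting complex — the statement as given carries hypotheses beyond what this particular conclusion needs — and the total complex $P\otimes_\Lambda C$ is well-defined without any completion issues precisely because $C$ is bounded, so in each total degree only finitely many bidegrees contribute; you flag this as ``boundedness bookkeeping,'' and it is worth the one-line check.
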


The following proposition ties tilting complexes, two-sided tilting complexes and derived equivalences.

\begin{proposition}[{\cite[Theorem 4.1.]{Brou1994}, \cite[Theorem 3.3]{Rickard1991}}]\label{prop:Rickard} The following conditions are equivalent:
  \begin{itemize}
    \item The $k$-algebra $\Gamma$ is derived equivalent to $\Lambda$.
    \item There exists a tilting complex $T$ of $\Gamma$-modules such that $\End_{\derb(\Gamma)}(T)$ is isomorphic to $\Lambda$ as a $k$-algebra.
    \item There exists a two-sided tilting complex of $\Lambda^{\op}\otimes_k \Gamma$-modules.
  \end{itemize}
\end{proposition}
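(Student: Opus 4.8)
The plan is to establish the cycle of implications $(1)\Rightarrow(2)\Rightarrow(3)\Rightarrow(1)$. The implication $(3)\Rightarrow(1)$ is immediate from the definition: if $C$ is a two-sided tilting complex of $\Lambda^{\op}\otimes_k\Gamma$-modules, then ${-}\otimes_\Lambda^{\mathbb L} C$ is, by definition, an equivalence $\derb(\Lambda)\to\derb(\Gamma)$, so $\Gamma$ and $\Lambda$ are derived equivalent.

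For $(1)\Rightarrow(2)$ I would fix a triangle equivalence $F\colon\derb(\Lambda)\to\derb(\Gamma)$ and put $T:=F(\Lambda)$. One first checks that $T$ is isomorphic in $\derb(\Gamma)$ to a bounded complex of finitely generated projective $\Gamma$-modules: this is because $\Lambda$ is a perfect object of its derived category and a triangle equivalence preserves this property (passing, if necessary, to the unbounded derived categories, where $\Lambda$ is compact and compact objects are perfect). The two conditions defining a tilting complex are then inherited from $\Lambda$ through $F$: the vanishing $\Hom_{\derb(\Gamma)}(T,T[n])\cong\Hom_{\derb(\Lambda)}(\Lambda,\Lambda[n])=0$ for $n\neq 0$ holds since $F$ is fully faithful, and the generation condition holds because $\addcat(\Lambda)$ generates $\homob(\projcat\Lambda)$ while $F$ carries $\homob(\projcat\Lambda)$ onto $\homob(\projcat\Gamma)$ and $\addcat(\Lambda)$ onto $\addcat(T)$. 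Finally, full faithfulness of $F$ yields an isomorphism of $k$-algebras $\End_{\derb(\Gamma)}(T)\cong\End_{\derb(\Lambda)}(\Lambda)\cong\Lambda$.

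The implication $(2)\Rightarrow(3)$ is the substantial one. Starting from a tilting complex $T$ with $\End_{\derb(\Gamma)}(T)\cong\Lambda$, realized as a bounded complex of finitely generated projective $\Gamma$-modules, I would form its differential graded endomorphism algebra $\mathcal{E}:=\End^{\bullet}_{\Gamma}(T)$. The tilting condition $\Hom_{\derb(\Gamma)}(T,T[n])=0$ for $n\neq 0$ says exactly that the cohomology of $\mathcal{E}$ is concentrated in degree $0$, where it is the algebra $\Lambda$; hence $\mathcal{E}$ is connected to $\Lambda$ by a zig-zag of quasi-isomorphisms of differential graded algebras (e.g.\ through the truncation $\tau^{\le 0}\mathcal{E}$). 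Viewing $T$ as a dg $\mathcal{E}$-$\Gamma$-bimodule and transporting along this zig-zag produces a bounded complex $C$ of $\Lambda^{\op}\otimes_k\Gamma$-modules, projective on one side, whose restriction to $\Gamma$ is isomorphic to $T$ in $\derb(\Gamma)$ and for which ${-}\otimes_\Lambda^{\mathbb L} C$ is an equivalence $\derb(\Lambda)\to\derb(\Gamma)$; this $C$ is the desired two-sided tilting complex. (Here one uses that $\Gamma$ and $\Lambda$ are finite-dimensional in order to arrange that $C$ is bounded and one-sidedly projective.) Concretely, this implication is \cite[Theorem 3.3]{Rickard1991}, and together with $(1)\Leftrightarrow(2)$ it gives \cite[Theorem 4.1]{Brou1994}.

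The main obstacle is plainly $(2)\Rightarrow(3)$: an abstract triangle equivalence, or a tilting complex with prescribed endomorphism ring, does not by itself carry a bimodule realizing it, and the $\Lambda$-action on a complex over $\Gamma$ has to be manufactured. The dg-algebra construction above (or Rickard's explicit one) is what supplies this action, and the care lies in keeping the resulting complex bounded and projective on one side. By contrast, $(1)\Rightarrow(2)$ and $(3)\Rightarrow(1)$ are formal consequences of the definitions together with the fact that derived equivalences preserve perfectness.
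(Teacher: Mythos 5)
The paper offers no proof of \cref{prop:Rickard}; it is stated with citations to Brou\'e and Rickard, so there is no in-paper argument to compare yours to. Your sketch is a reasonable outline of the standard arguments from those sources, with the cycle $(1)\Rightarrow(2)\Rightarrow(3)\Rightarrow(1)$ correctly identified, $(3)\Rightarrow(1)$ trivial, and $(2)\Rightarrow(3)$ correctly flagged as the genuine content.

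Two remarks. First, in $(1)\Rightarrow(2)$ the key point is that a triangle equivalence $\derb(\Lambda)\to\derb(\Gamma)$ restricts to an equivalence $\homob(\projcat\Lambda)\to\homob(\projcat\Gamma)$. Your justification via compactness in the unbounded derived category presupposes that $F$ extends to $D(\Lambda)\to D(\Gamma)$, which is not automatic. The cleaner route, and the one in Rickard's Morita theory paper, is to characterize perfect objects intrinsically inside $\derb$: $X$ is perfect if and only if for every $Y\in\derb(\Lambda)$ one has $\Hom(X,Y[n])=0$ for $n\gg 0$. That characterization is manifestly preserved by any triangle equivalence and needs no extension to the unbounded category. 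For a sketch this is a minor point, but as written the compactness argument has a gap. Second, your $(2)\Rightarrow(3)$ follows Keller's dg-algebra route (form $\mathcal{E}=\End^\bullet_\Gamma(T)$, observe its cohomology is $\Lambda$ concentrated in degree $0$, connect $\mathcal{E}$ to $\Lambda$ by the zig-zag $\mathcal{E}\leftarrow\tau^{\le 0}\mathcal{E}\to\Lambda$, and transport $T$ along it). The cited Rickard reference proves the same implication by a different, more hands-on construction (lifting the tilting complex to a bimodule complex via a direct homological argument), without invoking dg algebras. Both are valid; the dg version is conceptually cleaner, Rickard's is more self-contained within classical homological algebra. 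You correctly put your finger on where the real work lies, namely manufacturing the left $\Lambda$-action and keeping the resulting bimodule complex bounded and one-sidedly projective, which is exactly where both proofs spend their effort.
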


\section{Stable equivalences of Morita type}\label{s:stable}
In this section, we outlook the basic results on stable equivalences, which are weaker equivalences than derived equivalences.
Let $\Gamma$ and $\Lambda$ be two finite dimensional indecomposable symmetric $k$-algebras. For $\Gamma$-modules $U$ and $V$, we denote the $k$-linear space of all homomorphisms from $U$ to $V$ which pass through projective modules by $\prHom(U,V)$. The stable category of $\Gamma$-module denoted by $\stmodcat{\Gamma}$ is defined as follows:
\begin{itemize}
  \item The objects are the same as as those of $\Gamma$-module category.
  \item For $\Gamma$-modules $U$ and $V$, the set of arrows from $U$ to $V$ is $\Hom(U,V)/\prHom(U,V)$. We denote this by $\stHom(U,V)$.
\end{itemize} In addition, the category $\stmodcat{\Gamma}$ is a triangulated category with the auto functor $\syzygy[-1]$.

\begin{definition}[{\cite[\S5]{Brou1994}}]
  We say that $\Gamma$ and $\Lambda$ are \emph{stably equivalent of Morita type} if there exist a $(\Lambda, \Gamma)$-bimodule $M$ and a $(\Gamma, \Lambda)$-bimodule $N$ satisfying the following conditions.
  \begin{itemize}
    \item The bimodules $M$ and $N$ are projective as left modules and right modules.
    \item $N\otimes_\Lambda M\cong \Gamma\oplus P$ as $(\Gamma,\Gamma)$-bimodules for some projective $(\Gamma,\Gamma)$-bimodule $P$.
    \item $M\otimes_\Gamma N\cong \Lambda\oplus Q$ as $(\Lambda,\Lambda)$-bimodules for some projective $(\Lambda,\Lambda)$-bimodule $Q$.
  \end{itemize}
  We say that $M$ \emph{induces a stable equivalence of Morita type}.
\end{definition}

We remark that the above $(\Lambda, \Gamma)$-bimodule $M$ induces a functor ${-}\otimes_\Lambda M: \modcat{\Lambda} \to \modcat{\Gamma}$, which induces a stable equivalence $\stmodcat{\Lambda} \to \stmodcat{\Gamma}$.

We have the following proposition
noting that by \cite[Theorem 2.1]{MR1027750},
the quotient category $\derb (\Gamma)/\homob (\projcat \Gamma)$ is equivalent to the stable module category of $\Gamma$ as a triangulated category,

\begin{proposition}[{\cite[Corrolary 5.5]{Rickard1991}}]\label{derivedandstable}
  If $F:\derb(\Gamma)\to \derb(\Lambda)$ is a derived equivalence, then there is an indecomposable $(\Lambda,\Gamma)$-bimodule $M$ inducing a stable equivalence of Morita type which commutes with the following diagram up to equivalences.
  \[\begin{tikzcd}[]
      \derb(\Lambda)\dar[""]\rar["F^{-1}"]&\derb(\Gamma)\dar[""]
      \\
      \derb(\Lambda)/\homob(\projcat\Lambda) \dar[phantom,"\rotatebox{-90}{$\cong$}"] & \derb(\Gamma)/\homob(\projcat\Gamma) \dar[phantom,"\rotatebox{-90}{$\cong$}"]\\
      \stmodcat{\Lambda}\rar["-\otimes_{\Lambda} M"]&\stmodcat{\Gamma}
    \end{tikzcd}. \]

\end{proposition}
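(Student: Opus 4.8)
The statement is essentially \cite[Corollary~5.5]{Rickard1991}, and the plan is to follow that route in four steps.

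\textbf{Step 1: descend $F$ to the stable categories.} Since $F$ is a derived equivalence it restricts to an equivalence $\homob(\projcat\Gamma)\xrightarrow{\ \sim\ }\homob(\projcat\Lambda)$ between the categories of perfect complexes (these being the thick subcategories generated by $\Gamma$, resp.\ by $\Lambda$, and $F(\Gamma)$ being a tilting complex for $\Lambda$), and hence descends to an equivalence of the Verdier quotients. By the equivalences $\derb(\Gamma)/\homob(\projcat\Gamma)\cong\stmodcat\Gamma$ and $\derb(\Lambda)/\homob(\projcat\Lambda)\cong\stmodcat\Lambda$ recalled above (\cite[Theorem~2.1]{MR1027750}), this amounts to a triangulated equivalence $\overline F^{-1}\colon\stmodcat\Lambda\to\stmodcat\Gamma$. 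With these identifications the vertical legs of the diagram, and the commutativity of its lower part, are automatic, so the remaining task is to realize $\overline F^{-1}$ by a bimodule of the stated kind.

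\textbf{Step 2: roll a two-sided tilting complex into a bimodule.} By \cref{prop:Rickard} together with \cref{omittingL} I would fix a two-sided tilting complex $C$ of $(\Lambda,\Gamma)$-bimodules, with every term projective both as a left $\Lambda$- and as a right $\Gamma$-module, with $F^{-1}\simeq{-}\otimes_\Lambda C$, and with an inverse $(\Gamma,\Lambda)$-bimodule complex $C^{-1}$, so that $F\simeq{-}\otimes_\Gamma C^{-1}$ and $C\otimes_\Gamma C^{-1}\simeq\Lambda$, $C^{-1}\otimes_\Lambda C\simeq\Gamma$ in the corresponding derived categories of bimodules. Since $\Delta:=\Lambda^{\op}\otimes_k\Gamma$ is again symmetric, $\derb(\Delta)/\homob(\projcat\Delta)\cong\stmodcat\Delta$; an induction on the width of a bounded complex---using the brutal truncation triangles in $\derb(\Delta)$ and the identification, after passing to the quotient, of a shift $V[n]$ with $\syzygy[-n]V$, so that the resulting ``roll-up'' of a complex is assembled from iterated syzygies and a single pushout extension---shows that every object of $\derb(\Delta)$ becomes isomorphic in this quotient to a $\Delta$-module. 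Applying this to $C$ produces a $(\Lambda,\Gamma)$-bimodule $M$ with $C\cong M$ in $\derb(\Delta)/\homob(\projcat\Delta)$; and by tracking the construction, using that over a self-injective algebra every module of finite projective dimension is projective and that a projective bimodule restricts to a projective module on each side, one checks that $M$ is projective as a left $\Lambda$- and as a right $\Gamma$-module. The same construction applied to $C^{-1}$ gives a $(\Gamma,\Lambda)$-bimodule $N$, projective on both sides.

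\textbf{Step 3: Morita-type conditions and commutativity.} For a $\Lambda$-module $V$ the honest functor $V\otimes_\Lambda({-})$ is exact on complexes of left-$\Lambda$-projective $(\Lambda,\Gamma)$-bimodules and sends projective bimodules to projective $\Gamma$-modules, since $V\otimes_\Lambda(\Lambda\otimes_k\Gamma)\cong V\otimes_k\Gamma$; hence it descends modulo perfect complexes, and applied to $C\cong M$---using $V\otimes_\Lambda C\simeq F^{-1}(V)$---it yields $F^{-1}(V)\cong V\otimes_\Lambda M$ in $\stmodcat\Gamma$, naturally in $V$. Thus ${-}\otimes_\Lambda M$ realizes $\overline F^{-1}$ on the stable categories, and symmetrically ${-}\otimes_\Gamma N$ realizes $\overline F$, so the diagram commutes. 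Likewise the functors ${-}\otimes_\Gamma C^{-1}$ and $M\otimes_\Gamma{-}$, from bimodule complexes to $\derb(\Lambda^{\op}\otimes_k\Lambda)$, send projective bimodules to projective $\Lambda$-bimodules and so descend modulo perfect complexes; feeding $C\cong M$ and $C^{-1}\cong N$ through them turns the identity $C\otimes_\Gamma C^{-1}\simeq\Lambda$ into an isomorphism $M\otimes_\Gamma N\cong\Lambda$ in the stable category of $\Lambda$-bimodules, and symmetrically $N\otimes_\Lambda M\cong\Gamma$ in the stable category of $\Gamma$-bimodules. As these are honest bimodules, Krull--Schmidt---together with the fact that $\Lambda$ and $\Gamma$ are indecomposable non-projective bimodules (the semisimple case being trivial)---upgrades them to $M\otimes_\Gamma N\cong\Lambda\oplus Q$ and $N\otimes_\Lambda M\cong\Gamma\oplus P$ with $P,Q$ projective bimodules. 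Finally I discard the projective bimodule summands of $M$ (they act by zero on stable categories) and adjust $N$ accordingly; a last Krull--Schmidt argument with these two isomorphisms forces the surviving bimodule $M$ to be indecomposable, and $(M,N)$ is then the required pair.

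\textbf{Expected main obstacle.} The manipulations of triangulated quotients above should be routine; the delicate point is Step 2---that the ``roll-up'' of $C$ can be carried out landing on a genuine bimodule which is still projective on each side---and, correspondingly, the assertion in Step 3 that the cancellation comes out as an honest isomorphism of bimodules rather than merely a stable equivalence. This is precisely where the symmetry of $\Lambda^{\op}\otimes_k\Gamma$ (and of $\Lambda^{\op}\otimes_k\Lambda$, $\Gamma^{\op}\otimes_k\Gamma$) is indispensable, since it is what makes the relevant stable categories of bimodules available and keeps one-sided projectivity under control through the reductions.
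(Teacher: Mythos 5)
The paper offers no proof here: \cref{derivedandstable} is quoted verbatim from Rickard's Corollary~5.5, and the indecomposability of $M$ is delegated in the remark that immediately follows to Linckelmann's Proposition~2.4 and Dugas--Mart\'inez-Villa's Proposition~2.2. Your reconstruction of Rickard's own argument is therefore the right thing to compare against, and Steps~1--2 and the first half of Step~3 follow it faithfully: descending the perfect-complex subcategory to get the stable equivalence, working over the symmetric algebra $\Delta=\Lambda^{\op}\otimes_k\Gamma$ so that $\derb(\Delta)/\homob(\projcat\Delta)\cong\stmodcat\Delta$, rolling up the two-sided tilting complex $C$ into a single bimodule $M$ via iterated syzygies, and observing that one-sided projectivity survives each syzygy because the short exact sequence $0\to\syzygy X\to P\to X\to 0$ splits over $\Gamma$ (resp.\ $\Lambda^{\op}$) when $X$ is one-sided projective and the one-sided algebra is self-injective. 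The transfer of $C\otimes_\Gamma C^{-1}\simeq\Lambda$ to $M\otimes_\Gamma N\cong\Lambda\oplus Q$ by Krull--Schmidt is likewise correct once one notes that $\Lambda$ is indecomposable and non-projective as a $\Lambda$-bimodule (the non-semisimple case).

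The gap is in the last two lines of Step~3, where you claim that ``a last Krull--Schmidt argument'' forces the projective-free part of $M$ to be indecomposable. Krull--Schmidt alone does not get you there. Writing $M=M_1\oplus M_2$ with both pieces non-projective, Krull--Schmidt applied to $M\otimes_\Gamma N\cong\Lambda\oplus Q$ tells you that one summand, say $M_2\otimes_\Gamma N$, is a projective $\Lambda$-bimodule, hence that $V\otimes_\Lambda M_2$ is projective for every $\Lambda$-module $V$; but to conclude from this that $M_2$ is itself a projective \emph{bimodule} (so that it cannot occur in the projective-free part) you need the nontrivial criterion that a $(\Lambda,\Gamma)$-bimodule which is projective on each side and kills the stable category is projective as a bimodule. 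That criterion is precisely what Rouquier's projective cover formula (\cref{projcoverRouquier} in this paper) and Linckelmann's uniqueness argument supply, and it is the reason the paper cites Linckelmann and Dugas rather than leaving the indecomposability to Krull--Schmidt. You should either invoke that criterion explicitly or cite the indecomposability as the paper does; as written, the step asserts more than Krull--Schmidt proves.
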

We remark that the indecomposability of $M$ in the above proposition is
followed by \cite[Proposition 2.4]{Linckelmann1996} and \cite[Proposition 2.2]{DUGAS2007421}. Let $\mathcal S'$ denote a complete set of representatives of isomorphism classes of simple $\Lambda$-modules.

\begin{proposition}[{\cite[Lemma 2]{projcover}}]\label{projcoverRouquier}
  Let $M$ be a $\Lambda^{\op}\otimes_k \Gamma$-module, projective as a $\Gamma$-module and as a
  $\Lambda^{\op}$-module. A projective cover of $M$ is isomorphic to \[\bigoplus_{V\in \mathcal S'} P(V)^{\ast} \otimes_k P(V\otimes_\Lambda M). \]
\end{proposition}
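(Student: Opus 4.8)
The plan is to identify $P:=\bigoplus_{V\in\mathcal S'}P(V)^*\otimes_k P(V\otimes_\Lambda M)$ with the projective cover of $M$. Writing $R:=\Lambda^{\op}\otimes_k \Gamma$ and $\mathrm{top}(X):=X/\rad(R)X$ for an $R$-module $X$, it is enough to check that $P$ is projective as an $R$-module and that $\mathrm{top}(P)\cong\mathrm{top}(M)$, because a projective module over a finite-dimensional algebra is determined up to isomorphism by its top. I first collect the standard facts that hold because $k$ is algebraically closed: the radical of $R$ equals $\rad(\Lambda^{\op})\otimes_k\Gamma+\Lambda^{\op}\otimes_k\rad(\Gamma)$; consequently, for a projective $\Lambda^{\op}$-module $X$ and a projective $\Gamma$-module $Y$, the $R$-module $X\otimes_k Y$ is projective and $\mathrm{top}(X\otimes_k Y)\cong\mathrm{top}_{\Lambda^{\op}}(X)\otimes_k\mathrm{top}_\Gamma(Y)$; the algebra $\Lambda/\rad\Lambda$ decomposes as $\bigoplus_{V\in\mathcal S'}V^{\circ}\otimes_k V$ as a $(\Lambda,\Lambda)$-bimodule, where $V^{\circ}$ denotes the simple left $\Lambda$-module corresponding to $V$ and the left action is on the factor $V^{\circ}$; and, writing $P(V)=e_V\Lambda$ for a primitive idempotent $e_V$, we have $P(V)^*\cong\Lambda e_V$, the indecomposable projective $\Lambda^{\op}$-module with top $V^{\circ}$. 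The same statements hold with $\Gamma$ in place of $\Lambda$.

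Granting this, $\mathrm{top}(P)$ is obtained by distributing the top functor over the direct sum and the tensor product over $k$ and using $\mathrm{top}(P(Z))\cong\mathrm{top}(Z)$ for the projective cover of a module $Z$: one gets $\mathrm{top}(P)\cong\bigoplus_{V\in\mathcal S'}V^{\circ}\otimes_k\mathrm{top}_\Gamma(V\otimes_\Lambda M)$. For $M$ itself, $\mathrm{top}(M)=M/(\rad(\Lambda)M+M\rad(\Gamma))=(\Lambda/\rad\Lambda)\otimes_\Lambda M\otimes_\Gamma(\Gamma/\rad\Gamma)$; substituting the bimodule decomposition of $\Lambda/\rad\Lambda$ gives $(\Lambda/\rad\Lambda)\otimes_\Lambda M\cong\bigoplus_{V\in\mathcal S'}V^{\circ}\otimes_k(V\otimes_\Lambda M)$, and, since the factor $V^{\circ}$ carries no $\Gamma$-action, tensoring on the right with $\Gamma/\rad\Gamma$ replaces each $V\otimes_\Lambda M$ by its $\Gamma$-top, so $\mathrm{top}(M)\cong\bigoplus_{V\in\mathcal S'}V^{\circ}\otimes_k\mathrm{top}_\Gamma(V\otimes_\Lambda M)$. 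The two tops agree, and since $P$ is projective it must be the projective cover of $M$.

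Most of this is formal manipulation of tensor products; the point I would treat with care is the description of $\rad(R)$, together with the resulting compatibility of $\mathrm{top}$ with $\otimes_k$, which is precisely where algebraic closedness of $k$ enters: it ensures that $(\Lambda/\rad\Lambda)\otimes_k(\Gamma/\rad\Gamma)$ is semisimple, so that the evidently nilpotent ideal $\rad(\Lambda^{\op})\otimes_k\Gamma+\Lambda^{\op}\otimes_k\rad(\Gamma)$ exhausts $\rad(R)$. The other thing to keep straight is the bookkeeping of left versus right simple modules ($V$ versus $V^{\circ}$), which is harmless but easy to garble. I note in passing that the hypotheses that $M$ be projective as a $\Gamma$-module and as a $\Lambda^{\op}$-module are not needed for this argument; they belong to the setting in which the lemma is applied. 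A more constructive alternative would be, for each $V\in\mathcal S'$, to lift the projective cover $P(V\otimes_\Lambda M)\twoheadrightarrow V\otimes_\Lambda M$ of $\Gamma$-modules through the canonical surjection $e_V M\twoheadrightarrow V\otimes_\Lambda M$ and to assemble the results into a bimodule homomorphism $P\to M$, which is surjective by Nakayama's lemma; but confirming that this map is a projective cover still requires the computation of tops above, so the route through tops is the more economical one.
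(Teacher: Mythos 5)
Your proof is correct. The paper itself does not prove this proposition; it is stated with a citation to \cite[Lemma 2]{projcover}, so there is no in-text argument to compare against. Your reduction to ``same top, both projective'' is the standard and economical route: you correctly identify $\rad(\Lambda^{\op}\otimes_k\Gamma)=\rad(\Lambda^{\op})\otimes_k\Gamma+\Lambda^{\op}\otimes_k\rad(\Gamma)$ (this is where algebraic closedness of $k$ genuinely enters, ensuring the quotient is semisimple), use $P(V)^{\ast}\cong\Lambda e_V$, and invoke the bimodule decomposition $\Lambda/\rad\Lambda\cong\bigoplus_{V\in\mathcal S'}V^{\circ}\otimes_k V$ to match $\mathrm{top}(M)$ with $\mathrm{top}\bigl(\bigoplus_V P(V)^{\ast}\otimes_k P(V\otimes_\Lambda M)\bigr)$. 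Your side remark is also accurate: the hypothesis that $M$ be projective on each side is not needed for this particular computation of the projective cover; it is needed elsewhere (e.g.\ in \cref{projresol} and \cref{prop:hellermove}, where one must control the syzygies $\syzygy[\ell]M$ as bimodules inducing stable equivalences).
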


Let $M$ be an indecomposable $\Lambda^{\op}\otimes_k \Gamma$-module inducing a stable equivalence of Morita type between $\Lambda$ and $\Gamma$.
Let $P=(P^t, d_M^t)_{t\in \Z}$ be a projective resolution of the $\Lambda^{\op}\otimes_k \Gamma$-module $M$. Since $\syzygy[\ell] M$ gives a stable equivalence of Morita type for $\ell \ge 1$ (see \cite[Comparison 2.3.5]{rouquier2001a} and \cite[Lemma 4.2]{MR3895203}), the definition of a minimal projective resolution and \cref{projcoverRouquier} give the following proposition.
\begin{proposition}\label{projresol}
  The $(\Lambda,\Gamma)$-bimodule $P^{-t}$ is isomorphic to $M$ for $t=0$, to $0$ for all $t<0$, and
  to \[\bigoplus_{V\in \mathcal S'} P(V)^{\ast}\otimes_k P(V\otimes_{\Lambda} \syzygy[t-1] M)\] for all $t>0$.
\end{proposition}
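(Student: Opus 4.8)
The plan is to read off every term of the minimal projective resolution of $M$ by applying \cref{projcoverRouquier} successively to $M$ and to each of its syzygies in the bimodule category, and then to match the outcome with the indexing used in the statement, where, as the statement records, $M$ itself sits in degree $0$ of $P$.

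First I would set up notation: write the minimal projective resolution of the $\Lambda^{\op}\otimes_k\Gamma$-module $M$ as $\cdots\to Q_1\to Q_0\to M\to 0$. By the definition of the minimal projective resolution and of the syzygy (taken here in $\modcat(\Lambda^{\op}\otimes_k\Gamma)$), $Q_0$ is a projective cover of $M=\syzygy[0]M$ and, inductively, $Q_\ell$ is a projective cover of $\syzygy[\ell]M$ for every $\ell\ge 0$. In the indexing of the statement the complex $P$ holds $M$ in degree $0$ and its resolution in the negative degrees, so $P^0=M$, $P^{-t}=Q_{t-1}$ for $t\ge 1$, and $P^{-t}=0$ for $t<0$; hence the first two assertions are immediate and it only remains to identify $Q_{t-1}$ for $t\ge 1$.

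Next I would check that \cref{projcoverRouquier} applies to $\syzygy[\ell]M$ for each $\ell\ge 0$, i.e.\ that $\syzygy[\ell]M$ is projective both as a $\Gamma$-module and as a $\Lambda^{\op}$-module. For $\ell=0$ this is part of the definition of a stable equivalence of Morita type. For $\ell\ge 1$ it follows from the fact recalled just before the proposition that $\syzygy[\ell]M$ again induces a stable equivalence of Morita type; one may also argue directly by induction, since each $Q_\ell$, being a projective $\Lambda^{\op}\otimes_k\Gamma$-module, restricts to a projective module over $\Gamma$ and over $\Lambda^{\op}$ (as $\Lambda^{\op}\otimes_k\Gamma$ is free on either side), and the short exact sequence $0\to\syzygy[\ell+1]M\to Q_\ell\to\syzygy[\ell]M\to 0$ splits over $\Gamma$ (resp.\ over $\Lambda^{\op}$) as soon as $\syzygy[\ell]M$ is known to be projective on that side, making $\syzygy[\ell+1]M$ a direct summand of $Q_\ell$ there and hence projective there. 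Applying \cref{projcoverRouquier} to $\syzygy[\ell]M$ then gives $Q_\ell\cong\bigoplus_{V\in\mathcal S'}P(V)^{\ast}\otimes_k P(V\otimes_\Lambda\syzygy[\ell]M)$, and substituting $\ell=t-1$ yields the asserted description of $P^{-t}$ for $t>0$.

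I do not expect a genuine obstacle here: granted \cref{projcoverRouquier}, the argument is an induction together with some index bookkeeping, exactly as the sentence preceding the proposition suggests. The only points requiring a little care are verifying that every syzygy of $M$ still satisfies the hypotheses of \cref{projcoverRouquier} — handled as above — and keeping the shift in the syzygy index ($\syzygy[t-1]M$ rather than $\syzygy[t]M$) consistent with the convention that degree $0$ of $P$ carries $M$ itself and not its projective cover.
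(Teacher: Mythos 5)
Your proposal is correct and follows exactly the route the paper intends: the paper states \cref{projresol} without a written proof, just noting that it follows from the fact that $\syzygy[\ell]M$ again induces a stable equivalence of Morita type, the definition of a minimal projective resolution, and \cref{projcoverRouquier}; you have filled in precisely those steps (checking each syzygy is projective on both sides so that \cref{projcoverRouquier} applies, then shifting indices), and your alternative direct inductive justification of two-sided projectivity is also sound though not needed given the cited fact.
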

The following proposition is useful to consider a minimal projective resolution of the module $M$.
\begin{proposition}[{\cite[Lemma 4.3]{MR3895203}}]\label{prop:hellermove}
  We have $V \otimes_\Lambda \syzygy[\ell] M \cong \syzygy[\ell] (V \otimes_\Lambda M )$ for any simple $\Lambda$-module $V$ and $\ell\ge 0$.
\end{proposition}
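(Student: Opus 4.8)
The plan is to argue by induction on $\ell$, the case $\ell=0$ being trivial, with the inductive step reduced to the case $\ell=1$ applied to $\syzygy[\ell-1]M$ in place of $M$. Thus the heart of the matter is the following statement, which I would prove for an arbitrary $(\Lambda,\Gamma)$-bimodule $N$ that is projective as a left $\Lambda$-module and as a right $\Gamma$-module and an arbitrary simple $\Lambda$-module $V$: $V\otimes_\Lambda\syzygy N\cong\syzygy(V\otimes_\Lambda N)$. First I would check that each $\syzygy[t]M$ is again left-$\Lambda$-projective and right-$\Gamma$-projective, so that this statement may be iterated: by \cref{projcoverRouquier} the projective cover of such an $N$ is $Q:=\bigoplus_{W\in\mathcal S'}P(W)^{\ast}\otimes_k P(W\otimes_\Lambda N)$, which as a left $\Lambda$-module is a sum of copies of $P(W)^{\ast}$ and as a right $\Gamma$-module a sum of copies of $P(W\otimes_\Lambda N)$, hence left- and right-projective; the short exact sequence $0\to\syzygy N\to Q\to N\to 0$ then splits on each side, so $\syzygy N$ inherits both projectivities, and one iterates (this is the content of \cref{projresol}).

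For the case $\ell=1$, apply the functor $V\otimes_\Lambda-$ to $0\to\syzygy N\to Q\to N\to 0$. Because $N$ is projective as a left $\Lambda$-module the sequence splits as a sequence of left $\Lambda$-modules, hence remains exact after $V\otimes_\Lambda-$. Next I would compute $V\otimes_\Lambda Q$: writing $P(W)=e_W\Lambda$ with $e_W$ a primitive idempotent gives $P(W)^{\ast}=\Hom_\Lambda(e_W\Lambda,\Lambda)\cong\Lambda e_W$ as left $\Lambda$-modules, so $V\otimes_\Lambda P(W)^{\ast}\cong Ve_W\cong\Hom_\Lambda(P(W),V)\cong\Hom_\Lambda(W,V)$, which equals $k$ when $V\cong W$ and $0$ otherwise since $V$ is simple and $k$ is algebraically closed. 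The right $\Gamma$-action on $Q$ sits entirely on the factors $P(W\otimes_\Lambda N)$ and is untouched by $V\otimes_\Lambda-$, so only the summand indexed by the class of $V$ survives and $V\otimes_\Lambda Q\cong P(V\otimes_\Lambda N)$, the projective cover of the $\Gamma$-module $V\otimes_\Lambda N$.

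It remains to observe that the induced surjection $V\otimes_\Lambda Q\to V\otimes_\Lambda N$ is then a projective cover: a surjection from (a copy of) $P(X)$ onto $X$ induces a surjection of $\Gamma$-tops between $k$-vector spaces of the same dimension, hence an isomorphism on tops, so its kernel is contained in the radical. Therefore $\syzygy(V\otimes_\Lambda N)$ is isomorphic to the kernel of this surjection, which by the exactness above is $V\otimes_\Lambda\syzygy N$; this settles the case $\ell=1$. Applying it with $N=\syzygy[\ell-1]M$ and using the inductive hypothesis $V\otimes_\Lambda\syzygy[\ell-1]M\cong\syzygy[\ell-1](V\otimes_\Lambda M)$ yields $V\otimes_\Lambda\syzygy[\ell]M\cong\syzygy\bigl(\syzygy[\ell-1](V\otimes_\Lambda M)\bigr)=\syzygy[\ell](V\otimes_\Lambda M)$, which finishes the induction.

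The one genuinely non-formal point is the identification $V\otimes_\Lambda Q\cong P(V\otimes_\Lambda N)$ together with the accompanying remark that $V\otimes_\Lambda-$ carries the projective cover of $N$ to the projective cover of $V\otimes_\Lambda N$ (equivalently, that it preserves minimality of the resolution); the bookkeeping of the bimodule structures is the only place where one has to be careful. An alternative, perhaps cleaner, packaging is to tensor the entire minimal projective resolution of $M$ over $\Lambda$ with $V$ in one step: left-projectivity of all syzygies keeps the complex exact, the computation above identifies its $t$-th term with $P(V\otimes_\Lambda\syzygy[t]M)$, and the projective-cover remark shows the result is a minimal projective resolution of $V\otimes_\Lambda M$, so its $t$-th syzygy is precisely $V\otimes_\Lambda\syzygy[t]M$.
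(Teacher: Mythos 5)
Your argument is correct, and it is essentially the proof one would expect (the paper itself gives no proof, citing Kozakai--Kunugi for this lemma). You reduce by induction to $\ell=1$, tensor the projective-cover short exact sequence of the bimodule over $\Lambda$ with the simple $V$, observe the sequence stays exact because all terms are left-$\Lambda$-projective, identify $V\otimes_\Lambda Q$ with $P(V\otimes_\Lambda N)$ using \cref{projcoverRouquier} and the computation $V\otimes_\Lambda P(W)^\ast\cong\Hom_\Lambda(W,V)\cong\delta_{VW}k$, and check that the induced surjection onto $V\otimes_\Lambda N$ is itself a projective cover because a surjection from (a copy of) the projective cover onto the module must have kernel in the radical; this identifies the kernel $V\otimes_\Lambda\syzygy N$ with $\syzygy(V\otimes_\Lambda N)$. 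This is exactly the mechanism behind \cref{projresol} and matches the standard proof of the cited lemma.
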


The following proposition holds similarly to those of propositions in \cite{Rouquier1998} and \cite{MR3895203}. This is important to show that some complexes are two-sided tilting complexes.
\begin{proposition}[see {\cite[Sec.~10.3.4]{Rouquier1998} and \cite[Propositions 4.4 and 5.1]{MR3895203}}]\label{prop:two-sided} Let $C=(C^{\ell}, d_C^{\ell})$ be a bounded complex of $\Lambda^{\op}\otimes_k \Gamma$-modules such that $C^t=0$ for all $t>0$, $C^{0} = M$ and $C^t$ is projective for all $t<0$. If \[\Hom_{\derb(\Gamma)}(V\otimes_\Lambda C, W\otimes_\Lambda C[-n]) \cong \delta_{VW}\delta_{n0}k\] for $V,W\in \mathcal S'$ and $n\in \mathbb Z_{\ge 0}$, then $C$ is a two-sided tilting complex.
\end{proposition}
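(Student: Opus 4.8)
The plan is to show that, viewed as a complex of right $\Gamma$-modules, $C$ is a tilting complex for $\Gamma$ whose endomorphism algebra is $\Lambda$ acting through the left $\Lambda$-structure, and then to invoke the standard criterion recognising two-sided tilting complexes — this is the content of \cite[Sec.~10.3.4]{Rouquier1998} and \cite[Propositions~4.4 and 5.1]{MR3895203}, resting on \cite{Rickard1991} — namely that a bounded complex of $\Lambda^{\op}\otimes_k\Gamma$-modules which is projective on each side, restricts on the $\Gamma$-side to a tilting complex, and carries the isomorphism $\Lambda\xrightarrow{\sim}\End_{\derb(\Gamma)}(C)$ given by its bimodule structure (so that its restriction to $\Lambda^{\op}$ is then automatically a tilting complex for $\Lambda^{\op}$ with endomorphism algebra $\Gamma$), is a two-sided tilting complex. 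I would first record that $C$ is a bounded complex of projective right $\Gamma$-modules: $C^0=M$ is projective over $\Gamma$ because $M$ induces a stable equivalence of Morita type, and each $C^t$ with $t<0$ is projective over $\Lambda^{\op}\otimes_k\Gamma$, hence over $\Gamma$; thus $C\in\homob(\projcat\Gamma)$. Symmetrically $C$ is a bounded complex of projective $\Lambda^{\op}$-modules, so ${-}\otimes_\Lambda C$ is exact and coincides with ${-}\otimes_\Lambda^{\mathbb L}C$ on $\derb(\Lambda)$. Also, $M\otimes_\Gamma N\cong\Lambda\oplus Q$ shows $M$ is a progenerator as a left $\Lambda$-module, and from the short exact sequence of complexes $0\to M[0]\to C\to\sigma^{\le-1}C\to 0$ (where $\sigma^{\le-1}C$ is the brutal truncation of $C$ in degrees $<0$, a perfect complex) one gets an exact triangle $M[0]\to C\to\sigma^{\le-1}C\to M[1]$ in $\derb(\Gamma)$; tensoring it over $\Lambda$ with a simple module $V\in\mathcal S'$ and passing to $\stmodcat\Gamma\simeq\derb(\Gamma)/\homob(\projcat\Gamma)$ (note $V\otimes_\Lambda\sigma^{\le-1}C$ is again perfect) identifies $V\otimes_\Lambda C$ with $V\otimes_\Lambda M=F(V)$, the image of $V$ under the stable equivalence $F$ induced by $M$.

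The key device is a dévissage transferring the hypothesis from the simple modules to $C$ itself: filtering ${}_\Lambda\Lambda$ by a composition series and tensoring with $C$ over $\Lambda$ (flat there), one obtains that $C=\Lambda\otimes_\Lambda C$ is built in $\derb(\Gamma)$ from the objects $V\otimes_\Lambda C$ ($V\in\mathcal S'$) by iterated mapping cones; hence each $\Hom_{\derb(\Gamma)}(C,C[n])$ is controlled by the groups $\Hom_{\derb(\Gamma)}(V\otimes_\Lambda C,W\otimes_\Lambda C[n])$, and likewise for $\End_{\derb(\Gamma)}(C)$. Using this I would verify the tilting conditions for $C|_\Gamma$. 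First, self-orthogonality: $\Hom_{\derb(\Gamma)}(C,C[n])=0$ for $n<0$ follows from the dévissage and the hypothesis, and for $n>0$ it follows because $C$ is perfect over the symmetric algebra $\Gamma$, so that $\homob(\projcat\Gamma)$ is $0$-Calabi--Yau (its Serre functor ${-}\otimes^{\mathbb L}_\Gamma\Hom_k(\Gamma,k)$ is isomorphic to the identity, as $\Gamma$ is symmetric): thus $\Hom_{\derb(\Gamma)}(C,C[n])$ is $k$-dual to $\Hom_{\derb(\Gamma)}(C,C[-n])=0$. Secondly, I would show that ${-}\otimes_\Lambda C$ induces isomorphisms $\mathrm{Ext}^n_\Lambda(V,W)\xrightarrow{\sim}\Hom_{\derb(\Gamma)}(V\otimes_\Lambda C,W\otimes_\Lambda C[n])$ for all $V,W\in\mathcal S'$ and all $n\in\Z$; since the simple $\Lambda$-modules generate $\derb(\Lambda)$ as a triangulated category, the dévissage then shows ${-}\otimes_\Lambda C$ is fully faithful, and in particular $\End_{\derb(\Gamma)}(C)\cong\Lambda$ via the canonical map. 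For $n<0$ both sides vanish (the right one by hypothesis) and for $n=0$ both equal $\delta_{VW}k$ by Schur's lemma ($k$ being algebraically closed) and the hypothesis; the case $n>0$ is the crux, discussed below.

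Granting this, $C|_\Gamma$ is a self-orthogonal perfect complex with $\End_{\derb(\Gamma)}(C)\cong\Lambda$ through the bimodule structure, so it decomposes as $\bigoplus_{V\in\mathcal S'}(P(V)\otimes_\Lambda C)$ into pairwise non-isomorphic indecomposable summands (the idempotents $e_V\in\Lambda\cong\End_{\derb(\Gamma)}(C)$ being primitive, with $e_V\Lambda\not\cong e_W\Lambda$ for $V\ne W$). A stable equivalence of Morita type preserves the number of non-projective simple modules, and an indecomposable symmetric $k$-algebra is either simple (with a unique simple module) or has no projective simple module, so $|\mathcal S'|$ equals the number of simple $\Gamma$-modules. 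A presilting complex with the maximal possible number of indecomposable summands is silting, and having no negative self-extensions it is in fact a tilting complex; hence $C|_\Gamma$ is a tilting complex, in particular generating $\derb(\Gamma)$, with $\End_{\derb(\Gamma)}(C)\cong\Lambda$. The criterion quoted at the outset then shows that $C$ is a two-sided tilting complex.

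The main obstacle is the positive-degree comparison $\mathrm{Ext}^n_\Lambda(V,W)\cong\Hom_{\derb(\Gamma)}(V\otimes_\Lambda C,W\otimes_\Lambda C[n])$ for $n\ge1$: the hypothesis and the Calabi--Yau duality of $\homob(\projcat\Gamma)$ only pin down the relevant $\Hom$-groups in degrees $\le0$, so in positive degrees the identification has to be extracted from the stable equivalence. Since $V\otimes_\Lambda C\cong F(V)$ in $\stmodcat\Gamma$ and $\Lambda$ is self-injective, one gets $\mathrm{Ext}^n_\Lambda(V,W)\cong\stHom_\Lambda(V,W[n])\cong\stHom_\Gamma(V\otimes_\Lambda C,W\otimes_\Lambda C[n])$ for $n\ge1$, and the genuinely delicate point is that the canonical map from $\Hom_{\derb(\Gamma)}$ to $\stHom_\Gamma$ of these objects is bijective in positive degrees — this is where the precise shape of $C$ (namely $C^0=M$ with a projective tail in lower degrees), the boundedness of $C$, and the self-orthogonality already established must be used, following the arguments of \cite[Propositions~4.4 and 5.1]{MR3895203} and \cite[Sec.~10.3.4]{Rouquier1998}.
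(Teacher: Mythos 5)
The paper itself offers no proof of this proposition; it is quoted directly from \cite[Sec.~10.3.4]{Rouquier1998} and \cite[Propositions~4.4 and 5.1]{MR3895203}, so there is no in-paper argument to set yours against. Your outline follows the right general plan: check that $C|_\Gamma$ lies in $\homob(\projcat\Gamma)$, d\'evissage $C=\Lambda\otimes_\Lambda C$ along a composition series of ${}_\Lambda\Lambda$ to deduce $\Hom_{\derb(\Gamma)}(C,C[n])=0$ for $n<0$ from the hypothesis, use Serre duality for the $0$-Calabi--Yau category $\homob(\projcat\Gamma)$ (which does apply to $C|_\Gamma$, a genuine perfect complex) to kill $n>0$, and then invoke Rickard's recognition criterion once $\End_{\derb(\Gamma)}(C)\cong\Lambda$ is in hand. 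But you have flagged, and not closed, the load-bearing step, and the gap is real. The hypothesis only controls $\Hom_{\derb(\Gamma)}(V\otimes_\Lambda C, W\otimes_\Lambda C[m])$ for $m\le 0$, and the Calabi--Yau duality you use elsewhere cannot be transported to $V\otimes_\Lambda C$: its degree-$0$ term $V\otimes_\Lambda M$ is not projective over $\Gamma$ (a stable equivalence of Morita type sends non-projective simples to non-projective modules), so $V\otimes_\Lambda C\notin\homob(\projcat\Gamma)$. Hence the d\'evissage cannot, on formal grounds alone, yield the positive-degree isomorphisms $\mathrm{Ext}^n_\Lambda(V,W)\xrightarrow{\sim}\Hom_{\derb(\Gamma)}(V\otimes_\Lambda C, W\otimes_\Lambda C[n])$ that your argument for surjectivity of $\Lambda\to\End_{\derb(\Gamma)}(C)$ rests on, and you explicitly defer this to the very references you set out to reprove. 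That deferred step is the substantive content of the proposition; without it the proof is incomplete.

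Two further assertions in your final counting paragraph also carry unacknowledged weight: that a stable equivalence of Morita type between indecomposable non-simple symmetric algebras forces $|\mathcal S'|=|\mathcal S|$, and that a presilting object of $\homob(\projcat\Gamma)$ with the maximal number of indecomposable summands is automatically silting. Neither is elementary, and both are being used to stand in for the generation half of the tilting condition; they, too, would need either a citation or an argument before the proposal is complete.
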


\section{Perverse equivalences}\label{s:perverse}

We recall the notion of perverse equivalences introduced in \cite{Perverse}.
In this section, we assume that $\Gamma$ and $\Lambda$ are finite dimensional indecomposable symmetric $k$-algebras.
Let $\mathcal{S}$ be a complete set of representatives of isomorphism classes of simple $\Gamma$-modules, $r$ a non-negative integer, $p:\{0,\dots,r\}\to \mathbb{Z}$ a map, and $\mathcal S_{\bullet}$ a filtration of $\mathcal S$ satisfying \[\emptyset = \mathcal{S}_{-1}\subseteq \mathcal{S}_{0} \subseteq \mathcal{S}_{1}\subseteq \dots \subseteq \mathcal{S}_{r}=\mathcal{S}.\]
Similarly, let $\mathcal{S}'$ be a complete set of representatives of isomorphism classes of simple $\Lambda$-modules and $\mathcal S'_{\bullet}$ a filtration of $\mathcal S'$ satisfying \[\emptyset = \mathcal{S}'_{-1}\subseteq \mathcal{S}'_{0} \subseteq \mathcal{S}'_{1}\subseteq \dots \subseteq \mathcal{S}'_{r}=\mathcal{S}'.\]
\begin{definition}\label{def:perverse} An equivalence $F:\derb(\Gamma)\to \derb(\Lambda)$ is \emph{perverse relative to} $(\mathcal{S}_{\bullet}, \mathcal{S}'_{\bullet}, p)$ if for every $i$, the following hold:
  \begin{itemize}
    \item For $S \in \mathcal{S}_i - \mathcal{S}_{i-1}$, the composition factors of $H^{\ell}(F(S))$ are in $\mathcal S'_{i-1}$ for all $\ell\neq -p(i)$.
    \item There is a filtration of $\Lambda$-modules $L_1\subseteq L_2 \subseteq H^{-p(i)}(F(S))$ such that all composition factors of $L_1$ and $H^{-p(i)}(F(S))/L_2$ are in $\mathcal S'_{i-1}$, and $L_2/L_1 \in \mathcal{S}'_i - \mathcal{S}'_{i-1}$.
    \item The map $S\mapsto L_2/L_1$ induces a bijection $\mathcal{S}_i - \mathcal{S}_{i-1} \to \mathcal{S}'_i - \mathcal{S}'_{i-1}$.
  \end{itemize}
\end{definition}
We note that the definition of perverse equivalences presented here is equivalent to the one established by Chuang-Rouquier, as supported by \cite[Lemma 4.19]{Perverse}.
The following proposition indicates that perverse equivalences have good properties.
\begin{proposition}[{\cite[Lemma 4.2, Proposition 4.17. and Lemma 4.16.]{Perverse}}]\label{usefulperverse}
  Let $\Gamma'$ be a finite dimensional indecomposable symmetric $k$-algebra, $F:\derb(\Gamma)\to \derb(\Lambda)$ a perverse equivalence relative to $(\mathcal{S}_{\bullet}, \mathcal{S}'_{\bullet}, p)$ and $G:\derb(\Lambda) \to \derb(\Lambda')$ a perverse equivalence relative to $(\mathcal{S}'_{\bullet}, \mathcal{S}''_{\bullet}, p')$.
  The following hold:
  \begin{enumerate}
    \item\label{usefulperverse1} $F^{-1}$ is perverse relative to $(\mathcal{S}'_{\bullet}, \mathcal{S}_{\bullet}, -p)$.
    \item\label{usefulperverse2} $G\circ F$ is perverse relative to $(\mathcal{S}_{\bullet},\mathcal{S}''_{\bullet},p+p')$.
    \item\label{usefulperverse3} If $p=0$, then
    $F$ restricts to a Morita equivalence from $\Gamma$ to $\Lambda$. \end{enumerate}
\end{proposition}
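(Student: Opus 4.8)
The statement to prove is \cref{usefulperverse}, which collects three properties of perverse equivalences from \cite{Perverse}. Let me sketch a proof.

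\textbf{Proof proposal.}

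The plan is to verify each of the three assertions directly from \cref{def:perverse}, exploiting the symmetry and the bookkeeping built into the definition. Throughout I would fix the filtrations and perversity functions as in the statement, and I would repeatedly use that for a perverse equivalence the cohomology $H^\ell(F(S))$ of a simple module $S\in\mathcal S_i-\mathcal S_{i-1}$ is concentrated, up to composition factors in $\mathcal S'_{i-1}$, in the single degree $-p(i)$, where a new simple $L_2/L_1\in\mathcal S'_i-\mathcal S'_{i-1}$ appears exactly once.

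For \crefenum{usefulperverse}{usefulperverse1}, I would show that $F^{-1}\colon\derb(\Lambda)\to\derb(\Gamma)$ is perverse relative to $(\mathcal S'_\bullet,\mathcal S_\bullet,-p)$. The key step is an induction on $i$: assuming $F^{-1}$ sends the thick subcategory generated by $\mathcal S'_{i-1}$ to the thick subcategory generated by $\mathcal S_{i-1}$ (equivalently, by $H^\ell(F(S))$-considerations, that $F$ matches the two filtrations degreewise up to lower strata), one deduces that for $S'\in\mathcal S'_i-\mathcal S'_{i-1}$ the complex $F^{-1}(S')$ has cohomology supported — modulo $\mathcal S_{i-1}$ — in degree $p(i)$, with the distinguished subquotient being the preimage under the bijection of the third bullet. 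Concretely, one applies $F$ to a truncation triangle for $F^{-1}(S')$ and compares with the characterization of perverse images of objects in $\langle\mathcal S_i\rangle$; the single new composition factor on the $\Gamma$-side corresponds, under $F$, to $S'$ in degree $-p(i)$, and inverting gives degree $p(i)=-(-p)(i)$ on the other side, which is exactly the perversity function $-p$.

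For \crefenum{usefulperverse}{usefulperverse2}, I would compose: given $S\in\mathcal S_i-\mathcal S_{i-1}$, first analyze $F(S)$, which by hypothesis has cohomology in $\mathcal S'_{i-1}$ off degree $-p(i)$ and a distinguished subquotient $S'\in\mathcal S'_i-\mathcal S'_{i-1}$ in degree $-p(i)$; then apply $G$ termwise. Using that $G$ is perverse relative to $(\mathcal S'_\bullet,\mathcal S''_\bullet,p')$, each $H^{-p(i)}(F(S))$ contributes, after applying $G$ and taking cohomology, a shift by $-p'(i)$, so the new composition factor of $G(F(S))$ on the $\Gamma''$-side sits in degree $-p(i)-p'(i)=-(p+p')(i)$; the lower-stratum contributions (from both $H^\ell(F(S))$ with $\ell\neq-p(i)$ and from the non-distinguished parts of $H^{-p(i)}(F(S))$) land, via $G$, in $\mathcal S''_{i-1}$. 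The bookkeeping here is a spectral-sequence / iterated-truncation argument; the cleanest formulation is to work with the filtration of $\derb(\Lambda)$ by thick subcategories $\langle\mathcal S'_j\rangle$ and observe that both $F$ and $G$ are filtered with the prescribed shifts, so the composite is filtered with the summed shifts. The bijection $\mathcal S_i-\mathcal S_{i-1}\to\mathcal S''_i-\mathcal S''_{i-1}$ is just the composite of the two given bijections.

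For \crefenum{usefulperverse}{usefulperverse3}, when $p=0$ the first bullet of \cref{def:perverse} forces $H^\ell(F(S))=0$ for all $\ell\neq 0$ and every simple $S$, so $F(S)$ is a module (a stalk complex in degree $0$) for all $S$; since the simples generate $\derb(\Gamma)$ and $F$ is a triangulated equivalence, $F$ restricts to an exact equivalence $\modcat\Gamma\to\modcat\Lambda$, hence a Morita equivalence. The remaining bullets say this Morita equivalence sends each stratum $\mathcal S_i$ bijectively onto $\mathcal S'_i$, but for the bare statement only the Morita conclusion is needed.

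\textbf{Main obstacle.} The technical heart is \crefenum{usefulperverse}{usefulperverse1} and the filtered-composition bookkeeping in \crefenum{usefulperverse}{usefulperverse2}: one must show that the ``concentration up to lower strata'' condition behaves well under passing to the inverse functor and under composition, which requires carefully tracking truncation triangles and the induced filtrations on cohomology rather than just the top composition factor. Since the statement is quoted verbatim from \cite[Lemma 4.2, Proposition 4.17, Lemma 4.16]{Perverse}, in the paper we simply cite those results; the sketch above indicates the route Chuang--Rouquier and Grant take.
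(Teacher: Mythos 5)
The paper gives no proof of \cref{usefulperverse}: it is quoted verbatim with a citation to Grant's paper, which you correctly note at the end of your sketch. So the relevant question is whether your sketch is sound. Your outlines for \crefenum{usefulperverse}{usefulperverse1} and \crefenum{usefulperverse}{usefulperverse2} are vague but directionally reasonable: an induction on the filtration strata via truncation triangles for the inverse, and a filtered-composition argument for the composite, is indeed the route one takes.

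There is, however, a genuine gap in your argument for \crefenum{usefulperverse}{usefulperverse3}. You assert that ``when $p=0$ the first bullet of \cref{def:perverse} forces $H^\ell(F(S))=0$ for all $\ell\neq 0$ and every simple $S$.'' That is only immediate for $S\in\mathcal{S}_0$, because there the first bullet says the composition factors of $H^\ell(F(S))$ for $\ell\neq 0$ lie in $\mathcal{S}'_{-1}=\emptyset$. For $S\in\mathcal{S}_i-\mathcal{S}_{i-1}$ with $i\geq 1$, the first bullet only says those composition factors lie in $\mathcal{S}'_{i-1}$, which is nonempty, so $H^\ell(F(S))$ is not a priori zero for $\ell\neq 0$. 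To close the gap you need an induction on $i$: assuming $F$ restricts to an exact equivalence between the Serre subcategories generated by $\mathcal{S}_{i-1}$ and $\mathcal{S}'_{i-1}$, one pulls back the truncations $\tau_{\leq -1}F(S)$ and $\tau_{\geq 1}F(S)$ along $F^{-1}$ (which is also perverse of perversity $0$ by part \crefenum{usefulperverse}{usefulperverse1}) and reads off from the long exact cohomology sequence of the resulting triangle in $\derb(\Gamma)$ that these truncations must vanish, since $S$ is a stalk complex. Only then does $F$ send simples, hence all modules, to stalk complexes, yielding the Morita equivalence. Your sketch skips this inductive step and the essential use of the inverse functor; as written the deduction does not follow from the first bullet alone.
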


Next, for $i\in\{0,\dots,r\}$ and $S\in \mathcal{S}_i-\mathcal{S}_{i-1}$, we construct $T_S=(T_S^{\ell}, d^{\ell})_{\ell\in\mathbb Z}$ a complex with zero terms in degrees other than $-r,\dots,-i$, as follows. Put $T_S^{-i}=P(S)$. Having constructed $T_{S}^{u}$ and $d_{S}^u$ for all $u\in\{-j,\dots,-i\}$ for $j\in \{\check{i,\dots, r}\}$, let $M^{-j}$ be the smallest submodule of $K^{-j}:=\Ker(d^{-j}:T_S^{-j}\to T_S^{1-j})$ such that all composition factors of $K^{-j}/M^{-j}$ lie in $\mathcal{S}_j$. Define $d^{-j-1}: T_S^{-j-1}\to T_S^{-j}$ to be the composition of a projective cover $T_{S}^{-j-1}\to M^{-j}$ with the inclusion of $M^{-j}$ into $T_S^{-j}$.

\begin{proposition}[{\cite[Proposition 5.7]{Perverse}\label{decreasing}}]
  The complex $T=\bigoplus_{S\in\mathcal{S}} T_S$ is a tilting complex and the equivalence $F=\Hom_\Gamma^{\bullet}(T,{-}):\derb(\Gamma)\to \derb(\End_{\homob(\projcat \Gamma)}(T))$ is perverse relative to
  $(\mathcal{S}_{\bullet},\mathcal{S}'_{\bullet},p)$, where $p$ is given by $p(i)=-i$.
\end{proposition}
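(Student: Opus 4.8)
The plan is to check the two axioms of a tilting complex for $T=\bigoplus_{S\in\mathcal S}T_S$ and then to read off the perversity of $F=\Hom_\Gamma^\bullet(T,{-})$ from the construction. Everything is organised around one \emph{minimality lemma}: writing $\mathcal E_j$ for the Serre subcategory of $\modcat\Gamma$ of the modules all of whose composition factors lie in $\mathcal S_j$, the top of $M^{-j}$ has no composition factor in $\mathcal S_j$. Indeed, if some simple $U\in\mathcal S_j$ occurred in $\operatorname{top}(M^{-j})$ and $L=\Ker(M^{-j}\twoheadrightarrow U)$, then $K^{-j}/L$ would be an extension of $K^{-j}/M^{-j}\in\mathcal E_j$ by $M^{-j}/L\cong U\in\mathcal E_j$, hence would lie in $\mathcal E_j$, contradicting the minimality of $M^{-j}$. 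As a first step I would record, for $S\in\mathcal S_i-\mathcal S_{i-1}$, that $T_S$ is concentrated in degrees $-r,\dots,-i$ with $T_S^{-i}=P(S)$ and $T_S^{-j-1}=P(M^{-j})$, that $K^{-j-1}\cong\syzygy M^{-j}$ because the composite $P(M^{-j})\to M^{-j}\hookrightarrow T_S^{-j}$ has the same kernel as the projective cover onto $M^{-j}$, and hence that $H^{-j}(T_S)=K^{-j}/M^{-j}\in\mathcal E_j$ for $i<j\le r$, while $H^{-i}(T_S)=P(S)/M^{-i}$ is the largest quotient of $P(S)$ lying in $\mathcal E_i$; as $S$ is itself such a quotient this is nonzero, so $M^{-i}\subseteq\rad P(S)$ and $H^{-i}(T_S)$ has simple top $S$. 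In the extreme case $i=r$ one has simply $T_S=P(S)[r]$.

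For the generation axiom I would show that $\Gamma$ belongs to the triangulated subcategory $\langle T\rangle$ generated by $\addcat(T)$, by downward induction on the strata. If $S\in\mathcal S_r-\mathcal S_{r-1}$ then $P(S)\cong T_S[-r]\in\langle T\rangle$. Assuming $P(S'')\in\langle T\rangle$ for every $S''\notin\mathcal S_i$, take $S\in\mathcal S_i-\mathcal S_{i-1}$: by the minimality lemma each term $T_S^{-j-1}=P(M^{-j})$ with $i\le j\le r-1$ is a direct sum of indecomposable projectives $P(S'')$ with $S''\notin\mathcal S_j\supseteq\mathcal S_i$, so the brutal truncation $\sigma_{\le-i-1}T_S$ lies in $\langle T\rangle$ by the induction hypothesis, and the stupid-truncation triangle $P(S)[i]\to T_S\to\sigma_{\le-i-1}T_S\to P(S)[i+1]$ puts $P(S)$ into $\langle T\rangle$. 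Running this down to $i=0$ shows every $P(S)$, hence $\Gamma$, lies in $\langle T\rangle$.

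The technical heart is the orthogonality $\Hom_{\derb(\Gamma)}(T_S,T_{S'}[n])=0$ for $n\ne0$. As $T_S$ is a bounded complex of projectives this is $H^n\Hom_\Gamma^\bullet(T_S,T_{S'})$, and I would compute it through the Postnikov system of $T_{S'}$ in $\derb(\Gamma)$, whose graded pieces are the $H^{-j}(T_{S'})[j]$, thereby reducing to an analysis of $\Hom_{\derb(\Gamma)}(T_S,V[m])$ for modules $V\in\mathcal E_j$; this I would carry out by induction on the strata, peeling off the truncation triangle for $T_S$ and using the cohomology computation together with the minimality lemma, which pins down the components of any chain map out of $T_S$. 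Granting that $T$ is a tilting complex, $F=\Hom_\Gamma^\bullet(T,{-})$ is a derived equivalence $\derb(\Gamma)\to\derb(\Lambda)$ with $\Lambda:=\End_{\homob(\projcat\Gamma)}(T)$ by \cref{prop:Rickard}; the simple $\Lambda$-modules $\mathcal S'$ correspond to the indecomposable summands $T_S$, and $\mathcal S'_i$ is taken to consist of those with $S\in\mathcal S_i$.

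The same bookkeeping applied to the targets $S\in\mathcal S$ computes $F(S)=\Hom_\Gamma^\bullet(T,S)$: for $S\in\mathcal S_i-\mathcal S_{i-1}$ the class of the projection in $\Hom_\Gamma(P(S),S)=k$, sitting in degree $i$, survives to cohomology because $M^{-i}\subseteq\rad P(S)$ forces the differential into it to vanish, and it supplies the single new simple $L_2/L_1\in\mathcal S'_i-\mathcal S'_{i-1}$; tracking which $T_{S''}$ can contribute to $H^\ell(F(S))$ shows that all the remaining composition factors, in degrees $\ell\ne i$ as well as in the sub/quotient pieces $L_1$ and $H^i(F(S))/L_2$, come from $T_{S''}$ with $S''\in\mathcal S_{i-1}$, hence lie in $\mathcal S'_{i-1}$; this is precisely \cref{def:perverse} with $p(i)=-i$, and the bijectivity of $S\mapsto L_2/L_1$ follows. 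I expect the orthogonality --- and the parallel claim that $H^{-p(i)}(F(S))$ admits a filtration $L_1\subseteq L_2$ with exactly one new simple --- to be the main obstacle, both resting squarely on the minimality built into the $M^{-j}$. A cleaner way to keep the bookkeeping under control is to reduce the whole statement, by induction on $r$ together with the composition property of perverse equivalences in \cref{usefulperverse}, to the case of a two-step filtration $\emptyset\subseteq\mathcal S_0\subseteq\mathcal S$ with $p(0)=0$ and $p(1)=-1$, where $T_S=P(S)[1]$ for $S\notin\mathcal S_0$ and $T_S$ is a two-term complex $T_S^{-1}\to P(S)$ for $S\in\mathcal S_0$; there the tilting property is elementary, and one is left only to verify that the iterated construction reproduces $T$.
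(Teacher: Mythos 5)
This proposition is stated in the paper without proof; it is cited verbatim from Chuang--Rouquier's manuscript \emph{Perverse equivalences} (Proposition~5.7 there), so there is no in-paper argument to compare your sketch against. Assessing the sketch on its own terms, the central observations are correct: your minimality lemma (that $\operatorname{top}(M^{-j})$ has no composition factor in $\mathcal S_j$, because otherwise a proper submodule $L\subsetneq M^{-j}$ would still give $K^{-j}/L$ with composition factors in $\mathcal S_j$, as $\mathcal E_j$ is Serre and hence extension-closed) is exactly the device that controls the whole construction; the identifications $T_S^{-j-1}=P(M^{-j})$, $K^{-j-1}\cong\syzygy M^{-j}$, $H^{-j}(T_S)=K^{-j}/M^{-j}\in\mathcal E_j$ for $j>i$, and $H^{-i}(T_S)=P(S)/M^{-i}$ the largest quotient of $P(S)$ in $\mathcal E_i$ with simple top $S$ are all accurate; and the downward strata induction via the stupid-truncation triangle $P(S)[i]\to T_S\to\sigma_{\le -i-1}T_S\to$ does establish generation.

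What remains is what you yourself flag as the technical heart: the orthogonality $\Hom_{\derb(\Gamma)}(T_S,T_{S'}[n])=0$ for $n\neq 0$, and the verification that $H^{\ell}(F(S))$ carries the required filtration $L_1\subseteq L_2$ with $L_2/L_1$ the unique new simple. You gesture at these via the Postnikov tower of $T_{S'}$ and strata induction, but do not carry them out, so the sketch does not yet constitute a proof. Your closing alternative --- reduce by induction on $r$ to a two-step filtration $\emptyset\subseteq\mathcal S_0\subseteq\mathcal S$, where $T_S=P(S)[1]$ for $S\notin\mathcal S_0$ and $T_S$ is a two-term complex for $S\in\mathcal S_0$, then compose via \cref{usefulperverse} --- is a genuinely clean organising principle, but it hides a nontrivial check that you acknowledge and would still need to supply: the tilting complex built by iterating the two-step construction through a chain of intermediate algebras must be matched against the one-shot complex $T$, and this identification is not automatic just because the $p$-functions add.
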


We say that the equivalence $F=\Hom_\Gamma^{\bullet}(T,{-})$ is \emph{decreasing perverse relative} to $\mathcal{S}_{\bullet}$.

Let $\Lambda=\End_{\homob(\projcat \Gamma)}(T)$.
Given $S\in \mathcal{S}$, then $F(T_S)$ is isomorphic to an indecomposable projective $\Lambda$-module whose simple quotient we denote by $S'$.
For $S\in \mathcal{S}_i-\mathcal{S}_{i-1}$, we construct a complex $Y_S=(Y_S^{\ell}, d^{\ell})_{\ell\in \mathbb Z}$ with zero terms in degrees other than $-i,\dots, 0$. If $i=0$, we put $Y_S=S$. Otherwise start by putting $Y_S^{-i}=P(S)$. If $i=1$, we define $d^{-i}: Y_S^{-i}\to Y_S^{1-i}$ to be the quotient map from $P(S)$ to $P(S)/N^{1-i}$, where we define $N^{1-i}$ to be the largest submodule of $P(S)$ such that all composition factors of $N^{1-i}/S$ are in $\mathcal S^{i-1}$. Otherwise, we define $d^{-i}: Y_S^{-i}\to Y_S^{-i+1}$ to be the composition of the quotient map $P(S)\to P(S)/N^{1-i}$ with an injective hull $P(S)/N^{1-i} \to Y_S^{1-i}$.

Having constructed $Y_S^{u}$ and $d_S^{-1+u}$ for all $u\in\{-i,\dots, -j\}$ for $-j\in\{-i,\dots,-1\}$, let $N^{1-j}$ be the largest quotient of $C^{1-j}:=\Coker(d^{-1-j}:Y_S^{-1-j}\to Y_S^{-j})$ such that all composition factor of $\Ker(C^{1-j}\to N^{1-j})$ are in $\mathcal{S}_{j-1}$. Then let $d^{-j}:Y_S^{-j}\to Y_S^{1-j}$ be the composition of the canonical epimorphism $Y_S^{-j}\to N^{1-j}$ with an injective hull $N^{1-j}\to Y_S^{1-j}$. When $j=1$, the constructions of $C^{1-j}$ and $N^{1-j}$ are the same, but $d^{-j}:Y_S^{-j}\to Y_S^{1-j}$ is the canonical epimorphism $Y_S^{-j}\to N^{1-j}=Y_S^{1-j}$. The following proposition holds:
\begin{proposition}[{\cite[Lemma 5.9.]{Perverse}}]\label{simplemindeddecreasing}
  We have $Y_S \cong F^{-1}(S')$ for $S\in \mathcal{S}$.
\end{proposition}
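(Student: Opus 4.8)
The plan is to prove the equivalent statement $F(Y_S)\cong S'$ in $\derb(\Lambda)$, where, as in the paragraph preceding the proposition, $S'$ denotes the simple quotient of the indecomposable projective $\Lambda$-module $F(T_S)$. Set $P_{S''}:=F(T_{S''})$ for $S''\in\mathcal{S}$; since $F$ is an equivalence and $T=\bigoplus_{S''\in\mathcal{S}}T_{S''}$ has pairwise non-isomorphic indecomposable summands, $\{P_{S''}\}_{S''\in\mathcal{S}}$ is a complete set of indecomposable projective $\Lambda$-modules with simple quotients $(S'')'$. For a projective $\Lambda$-module $P$ and $Z\in\derb(\Lambda)$ one has $\Hom_{\derb(\Lambda)}(P,Z[n])\cong\Hom_\Lambda(P,H^n(Z))$, so $\dim_k\Hom_{\derb(\Gamma)}(T_{S''},X[n])=\dim_k\Hom_{\derb(\Lambda)}(P_{S''},F(X)[n])=[H^n(F(X)):(S'')']$ for every $X\in\derb(\Gamma)$. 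Hence it suffices to prove
\[
  \dim_k\Hom_{\derb(\Gamma)}(T_{S''},Y_S[n])=\delta_{S''S}\,\delta_{n0}\qquad(S''\in\mathcal{S},\ n\in\mathbb{Z}),
\]
for then $H^n(F(Y_S))=0$ for $n\neq0$ while $H^0(F(Y_S))$ has the single composition factor $S'$, so $F(Y_S)\cong S'$. Since $T_{S''}$ is a bounded complex of projectives, the left-hand side is the space of homotopy classes of chain maps $T_{S''}\to Y_S[n]$, which one can attack directly from the explicit shapes of the two complexes.

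I would argue by induction on the integer $i$ with $S\in\mathcal{S}_i-\mathcal{S}_{i-1}$. For $i=0$ we have $Y_S=S$, and the displayed identity is exactly $F(S)\cong S'$: by the perversity of $F$ (\cref{decreasing}) with $p(0)=0$, $F(S)$ is a simple $\Lambda$-module concentrated in degree $0$; since $T_S$ is supported in non-positive degrees, composing the morphism of complexes $T_S\to H^0(T_S)$ with the surjection $H^0(T_S)\twoheadrightarrow S$ gives a nonzero morphism $P_S=F(T_S)\to F(S)$ onto the simple module $F(S)$, which forces $F(S)\cong S'$. (Concretely, a chain map $T_{S''}\to S[n]$ is a homomorphism $T_{S''}^{-n}/\Im d_{T_{S''}}^{-n-1}\to S$, and by the minimality of the submodules $\Im d_{T_{S''}}^{\bullet}$ together with $S\in\mathcal{S}_0$ all such are null-homotopic unless $n=0$, the $n=0$ space being $\Hom_\Gamma(H^0(T_{S''}),S)=\delta_{S''S}k$.)

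For the inductive step, let $i\ge1$ and assume the identity for all simple modules in layers $<i$. The first differential of $Y_S$ factors as $Y_S^{-i}=P(S)\twoheadrightarrow P(S)/N^{1-i}\hookrightarrow Y_S^{1-i}$, where $N^{1-i}\subseteq P(S)$ is the largest submodule with $N^{1-i}/\soc P(S)$ supported on $\mathcal{S}_{i-1}$ and $\soc P(S)=S$; so $N^{1-i}$ is a subcomplex of $Y_S$ concentrated in degree $-i$, and there is a distinguished triangle $N^{1-i}[i]\to Y_S\to\bar Y_S\to N^{1-i}[i+1]$ in $\derb(\Gamma)$, where $\bar Y_S$ is $Y_S$ with $P(S)$ replaced by $P(S)/N^{1-i}$ in degree $-i$. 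From the construction one reads off that $\bar Y_S$ has cohomology concentrated in degrees $1-i,\dots,0$ with composition factors in $\mathcal{S}_{i-1}$, hence $F(\bar Y_S)$ has composition factors in $\mathcal{S}'_{i-1}$; and the short exact sequence $0\to S\to N^{1-i}\to N^{1-i}/S\to0$ together with perversity (the layer of $S$ is $i$, so $[H^m(F(S)):S']=\delta_{mi}$, while $N^{1-i}/S$ is supported on $\mathcal{S}_{i-1}$) gives $[H^m(F(N^{1-i})):S']=\delta_{mi}$. Applying $\Hom_{\derb(\Gamma)}(T_{S''},-[n])$ to the triangle and using the dimension formula above, the cases $S''=S$ and $(S'')'\notin\mathcal{S}'_{i-1}$ follow at once: in the first, $\Hom_{\derb(\Gamma)}(T_S,\bar Y_S[n])=0$, so $\Hom_{\derb(\Gamma)}(T_S,Y_S[n])\cong\Hom_{\derb(\Gamma)}(T_S,N^{1-i}[i+n])$ has dimension $\delta_{n0}$; in the second, $\Hom_{\derb(\Gamma)}(T_{S''},\bar Y_S[n])=0$ and $\Hom_{\derb(\Gamma)}(T_{S''},N^{1-i}[m])=0$, so $\Hom_{\derb(\Gamma)}(T_{S''},Y_S[n])=0$.

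The remaining case $(S'')'\in\mathcal{S}'_{i-1}$ (equivalently $S''\in\mathcal{S}_{i-1}$), where one must still show $\Hom_{\derb(\Gamma)}(T_{S''},Y_S[n])=0$, is the crux and the main obstacle. By the long exact sequence this is equivalent to the connecting map inducing isomorphisms $\Hom_{\derb(\Gamma)}(T_{S''},\bar Y_S[n])\xrightarrow{\sim}\Hom_{\derb(\Gamma)}(T_{S''},N^{1-i}[i+n+1])$ for all $n$; that is, $\bar Y_S$ must capture, and the boundary map faithfully transport, the entire $\mathcal{S}_{i-1}$-part of $N^{1-i}$. This is exactly where the extremal choices in the two constructions enter: the \emph{smallest} submodules $\Im d_{T_{S''}}^{-j-1}$ defining $T_{S''}$ and the \emph{largest} submodule $N^{1-i}$ and largest quotients $N^{1-j}$ defining $Y_S$ make the two constructions mutually inverse modulo lower layers, which is the content of the induction — concretely one identifies $\bar Y_S$ with an iterated assembly, via triangles, of the complexes $Y_{\tilde S}$ with $\tilde S$ in layers $<i$, to which the inductive hypothesis applies. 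A cleaner way to organize the same computation is to factor $F$ into elementary perverse equivalences, each relative to a one-step refinement of $\mathcal{S}_\bullet$, using the first and second assertions of \cref{usefulperverse}, track $Y_S$ as the corresponding iterated construction, and reduce to the one-layer case; I would use whichever bookkeeping makes the induction most transparent.
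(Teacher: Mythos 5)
The paper does not prove this proposition; it is quoted from Chuang--Rouquier (\cite[Lemma 5.9]{Perverse}) and used as a black box, so there is no in-paper argument to compare against. Treating your write-up as a reconstruction: the reduction to showing $\dim_k\Hom_{\derb(\Gamma)}(T_{S''},Y_S[n])=\delta_{S''S}\,\delta_{n0}$, the base case $i=0$, and the triangle $N^{1-i}[i]\to Y_S\to\bar Y_S\to N^{1-i}[i+1]$ are all sensible choices pointing in the right direction.

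However, there is a genuine gap, and it is exactly the one you flag yourself. For $S''\in\mathcal{S}_{i-1}$ you must show $\Hom_{\derb(\Gamma)}(T_{S''},Y_S[n])=0$ for all $n$, and your proposed mechanism --- that $\bar Y_S$ ``is an iterated assembly, via triangles, of the complexes $Y_{\tilde S}$ with $\tilde S$ in layers $<i$'' --- is asserted, not established, and is not automatic. In degree $-i$ the complex $\bar Y_S$ carries the non-projective module $P(S)/N^{1-i}$, whereas each $Y_{\tilde S}$ is built from its own projective cover $P(\tilde S)$ and its own chain of injective hulls; to realize $\bar Y_S$ as an iterated extension of shifts of the $Y_{\tilde S}$ one would need a d\'evissage of $P(S)/N^{1-i}$ that is compatible, degree by degree, with the $N^{\bullet}$-construction of $Y_S$, and nothing in the proposal supplies it. The ``cleaner way'' you mention at the end --- factor $F$ into one-step perverse equivalences via \cref{usefulperverse}, track $Y_S$ through the factorization, and reduce to a single layer --- is the route that can actually be carried out and is essentially how one proves this carefully, but you leave it as a pointer. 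As written the argument stalls precisely at the step you call the crux, so this is an honest sketch with a real hole rather than a complete proof.
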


\section{Preliminaries for graphs}\label{s:graphs}
Generalized Brauer tree algebras are constructed by generalized Brauer trees. Here we recall the definition of generalized Brauer trees and introduce terminology of rooted trees on them. The terminology helps us to describe the structures of modules
and homomorphisms over generalized Brauer tree algebras. This also works well for describing some tilting complexes. \begin{definition}[see {\cite[\S 2.2]{Schroll2018}}]
  Let $V$ be a finite set and $E$ a subset of the set \[\{~\!\{v,w\}\mid v,w \in V,\quad v\neq w\}.\]
  We say that $\mathcal T=(V,E)$ is an \emph{undirected graph} of the set of the \emph{vertices} $V$ and the set of the \emph{edges} $E$. Here, multi-edges and loops are not admitted for the graph.

  Let $\mathcal T=(V,E)$ be an undirected graph.
  A \emph{path} from a vertex $v$ to a vertex $w$ is a series of edges $e_1, e_2, \dots, e_N~~(N\in \mathbb N)$ satisfying the following conditions:
  \begin{itemize}
    \item $v\in e_1,~v\notin e_2,$
    \item $\forall i \in \{1,\dots,N-1 \}\quad \lvert e_i\cap e_{i+1}\rvert=1,$
    \item $w\notin e_{N-1},~w\in e_N.$
  \end{itemize}
  The \emph{length} of a path $(e_1,\dots, e_N)$ is defined as the number $N$. For example, the set of all paths of length $1$ is same to the edge set $E$.
  A path from a vertex $v$ to $v$ is called a \emph{cycle} of $v$.

  An undirected graph $\mathcal T=(V,E)$ is called a \emph{connected undirected graph} if there is a path from $v$ to $w$ for any distinct $v,w\in V$.
  We call a connected undirected graph with no cycles a \emph{tree}.

  It is called a \emph{generalized Brauer tree} that a tree $\mathcal T=(V,E)$ with a function $m: V \to \mathbb Z_{>0}$ and a cyclic order $\rho_v$ on the set of edges around $v$ assigned to each vertex $v$. For a vertex $v$, the image $m_v$ of $v$ by $m$ is called the \emph{multiplicity} at $v$.
  We write a generalized Brauer tree $\mathcal T=(V,E)$ with multiplicities $m$ and the cyclic orders $\rho$ by $(V,E,m,\rho)$.
\end{definition}

\begin{example}\label{ex:generalizedBrauertree}
  Assume \[V = \{a,b,c,d,e,f,g\},\]
  \[E=\bigl\{1=\{a,c\},~2=\{b,c\},~3=\{c,g\},~4=\{d,g\},~5=\{e,g\},~6=\{f,g\}\bigr\},\]
  \[m_a=m_d=1,\quad m_c=m_e=m_g=2,\quad m_b=m_f=3,\]
  and
  \[\begin{split}
      \rho_a =(1),\quad \rho_b&=(2), \quad \rho_c =(1<2<3),\quad \rho_d=(4),\\ &\rho_e=(5),\quad \rho_f = (6), \quad \rho_g=(3<4<5<6).
    \end{split}\]
  Then $\mathcal T=(V,E,m,\rho)$ is a generalized Brauer tree. We can depict this as follows:
  \tikzset{maru/.style={{circle,draw,minimum width=0, line width=0,minimum height=0,inner sep=1.5}}}
  \[\begin{tikzpicture}
      \node[maru,label=below:\fbox{2}](C){$c$};
      \node[above left = 2 of C, maru,label=above left:\fbox{1}](A){$a$};
      \node[below left = 2 of C, maru, label = below left:\fbox{3}](B){$b$}; \node[right = 2 of C, maru,label=below:\fbox{2}](D){$g$};
      \node[below right = 2 of D,maru,label= below right:\fbox{1}](E){$d$};
      \node[right = 2 of D,maru,label=right:\fbox{2}](F){$e$};
      \node[above right = 2 of D,maru, label= above right:\fbox{3}](G){$f$};
      \path[-,draw](C) to node[above]{$1$} (A);
      \path[-,draw](C) to node[below]{$2$} (B);
      \path[-,draw](C) to node[below]{$3$} (D);
      \path[-,draw](D) to node[below]{$4$} (E);
      \path[-,draw](D) to node[below]{$5$} (F);
      \path[-,draw](D) to node[below]{$6$} (G);
    \end{tikzpicture}\]
\end{example}
\begin{remark}
  The cyclic order at the vertex $g$ in \cref{ex:generalizedBrauertree} is the same as
  \[(4<5<6<3),\qquad (5<6<3<4),\qquad (6<3<4<5).\]
\end{remark}
\begin{remark}
  Let $\mathcal T=(V,E,m,\rho)$ be a generalized Brauer tree. Then $\mathcal T$ is a planer graph. Namely, it can be embedded in the plane. For a vertex $v$, we read the cyclic order on $v$ as the counterclockwise of the edges emanating from $v$.
\end{remark}

We introduce terminology of rooted trees to generalized Brauer tree algebras.
\begin{definition}\label{terminology}
  Let $\mathcal T=(V,E,m,\rho)$ be a generalized Brauer tree.
  Fix $v_0 \in V$. We call the pair $(\mathcal T, v_0)$ a \emph{rooted generalized Brauer tree} and we call $v_0$ a \emph{root} of $(\mathcal T,v_0)$. For a vertex $v$ of $(\mathcal T, v_0)$ other than $v_0$, the length of the path from $v_0$ to $v$ is called the \emph{depth} of $v$ we denote by $d(v)$. We define the depth of $v_0$ as $0$. The maximum of $d(w)$ for all $w\in V$ is called the \emph{height} of $(\mathcal T,v_0)$.
  The number of edges incident to $v$ is called the \emph{degree} of $v$ and is denoted by $\deg(v)$. The degree of $v_0$ is defined in the same manner.
  Given $(e_1,\dots,e_{d(v)})$ the path from $v$ to $v_0$,
  the vertex $w$ satisfying $\{v,w\}= e_1$ is called a \emph{parent} of $v$
  and is denoted by $p(v)$. The set of $w'\in V$ comprising $p(w')=v$ is called the set of \emph{children} of $v$.
  We take the cyclic order $\rho_v$ around $v$
  \[(\rho_v^1<\rho_v^2<\dots<\rho_v^{\deg(v)})\]
  to hold $\rho_v^{\deg(v)}=\{v,p(v)\}$.
  The edge $\rho_v^j$ is called the \emph{$j$th child} of $v$ we denote by $c(v,j)$ for $j\in \{1,\dots,\deg(v)-1\}$. We define a map $s$ from the vertices other than $v_0$ to $\mathbb N$ to hold $c(p(v),s(v))=v$ for all vertices $v$ other than $v_0$. We call the map $s$ as a \emph{sibling index map}.

  We have a one-to-one correspondence by
  \[\begin{tikzcd}[ampersand replacement=\&, row sep = tiny]
      V\rar[] \& E~\sqcup~\{0\} \\
      \rotatebox{90}{$\in$} \& \rotatebox{90}{$\in$} \\
      v\rar[mapsto]\&
      \begin{cases}
        0      & \text{if $v=v_0$,}         \\
        n\in E & \text{if $n= \{v,p(v)\}$.}
      \end{cases}
    \end{tikzcd}\]
  By this correspondence, we shall extend the depth function $d$, the degree function $\deg{}$, the parent map $p$, the $j$th child $c( {-}, j)$ and a sibling index map $s$,
  to make the following diagrams commutative:
  \[
    \begin{tikzcd}[sep=large]
      V\rar["d"]\dar[leftrightarrow, ""{name=U}]& \mathbb{N}\dar[equal, ""{name=V}]\\
      E~\sqcup~\{0\}\rar["d",dashed]&\mathbb N
      \ar[from=U, to=V, phantom, "\circlearrowright"]
    \end{tikzcd}, \quad
    \begin{tikzcd}[sep=large]
      V\rar["\deg{}"]\dar[leftrightarrow,""{name=U}]& \mathbb{N}\dar[equal,""{name=V}]\\
      E~\sqcup~\{0\}\rar["\deg{}",dashed]&\mathbb N
      \ar[from=U, to=V, phantom, "\circlearrowright"]
      \ar[from=U, to=V, phantom, "\circlearrowright"]
    \end{tikzcd}, \quad
    \begin{tikzcd}[sep=large]
      V-\{v_0\}\rar["p"]\dar[leftrightarrow,""{name=U}]& V\dar[leftrightarrow,""{name=V}]\\
      E\rar["p",dashed]&E \sqcup \{0\}
      \ar[from=U, to=V, phantom, "\circlearrowright"]
    \end{tikzcd},\]\[
    \begin{tikzcd}[sep=large]
      V\rar["{c({-},~j)}",dashed]\dar[leftrightarrow,""{name=U}]& V\dar[leftrightarrow,""{name=V}]\\
      E~\sqcup~\{0\}\rar["{c({-},~j)}"]&E~\sqcup~\{0\}
      \ar[from=U, to=V, phantom, "\circlearrowright"]
    \end{tikzcd},\quad
    \begin{tikzcd}[sep=large]
      V\rar["s"]\dar[leftrightarrow,""name=U]& \mathbb N \dar[equal,""name=V]\\
      E~\sqcup~\{0\}\rar["s",dashed]&\mathbb N
      \ar[from=U, to=V, phantom, "\circlearrowright"]
    \end{tikzcd}.\]
\end{definition}
We define some orders determined by Green's walk on generalized Brauer trees.
\begin{definition}
  Let $(\mathcal T,v_0) = (V,E, m,\rho, v_0)$ be a rooted generalized Brauer tree. We define a transformation $T_v$ on ordered lists of vertices for $v\in V$ as the following inductive way:
  \begin{itemize}
    \item If $v=v_0$, for an ordered list of vertices $L$, we set $L'$ to be the list obtained by appending $v$ at the end of the list $L$. If $v$ has a child, we define \[T_v(L)=(T_{c(v,\deg(v))}\circ\dots\circ T_{c(v,1)})(L').\] Otherwise, $T_v(L)=L'$.
    \item If $v\neq v_0$, for an ordered list of vertices $L$, we set $L'$ to be the list obtained by appending $v$ at the end of the list $L$. If $v$ has a child, we define \[T_v(L)=(T_{c(v,\deg(v)-1)}\circ\dots\circ T_{c(v,1)})(L').\] Otherwise, $T_v(L)=L'$.
  \end{itemize}
  Here, we take $L$ as an empty list. Then, we get the list $T_{v_0}(L)$. Each vertex appears in the list just once. This list is called the \emph{preorder traversal of vertices} of $(\mathcal T,v_0)$. By taking the images of vertices of the list through multiplicities $m$, we have a list $L_m$ consisting of natural numbers. We say that the list $L_m$ is \emph{preorder traversal of multiplicities}. By deleting the first element $v_0$ of $T_{v_0}(())$ and then applying the one-to-one correspondence from $V$ to $E\sqcup \{0\}$, described in \cref{terminology}, we have a list $L_E$ consisting of edges. We say that the list $L_E$ is the \emph{preorder traversal of edges} of $(\mathcal T,v_0)$.
\end{definition}

\begin{example}\label{myexample}
  Assume
  \[V = \{v_0,v_1, v_2, v_3, v_4, v_5, v_{6}\},\]
  \[\begin{split}
      E=\bigl\{1=\{v_0,v_1\},~2&=\{v_1,v_2\},~3=\{v_1,v_3\},\\
      ~4=\{v_3,v_4\},~5&=\{v_1,v_5\},~6=\{v_0,v_6\}\bigr\},
    \end{split}\]
  \begin{gather*}
    m_{v_0}=m_{v_{1}}=m_{v_{2}}=m_{v_{4}}=1,\\ m_{v_3}=m_{v_6}=2,~~ m_{v_5}=3,
  \end{gather*}
  and the cyclic order $\rho_v$ around $v$ is \[(\rho_v^1<\rho_v^2<\dots<\rho_v^{\deg(v)})\] for all $v\in V$ satisfying
  \[\begin{split}
      \rho_{v_1}^1&=2,~\rho_{v_1}^2=3,~\rho_{v_1}^3=5,~\rho_{v_1}^4=1, \\
      \rho_{v_2}^1&=2,~ \rho_{v_3}^1=4,~\rho_{v_3}^2=3,~
      \rho_{v_4}^1=4,\\~\rho_{v_5}^1&=5,~\rho_{v_6}^1=6,~
      \rho_{v_0}^1=1,~\rho_{v_0}^2=6.
    \end{split}\]
  Then $\mathcal T=(V,E,m,\rho)$ is a generalized Brauer tree. We can depict a rooted generalized Brauer tree $(\mathcal T,v_0)$ as follows:
  \[
    \begin{tikzpicture}
      \tikzset{block/.style={circle,draw,thick, minimum width=0, line width=0,minimum height=0,inner sep=1.0}};
      \tikzset{block1/.style={circle, white, fill=black,minimum width=0, line width=0,minimum height=0,inner sep=1.0}}
      \node[block1,label=above:\fbox{1}]{$v_0$}[sibling distance=3.5cm]
      child{ node[block, label=above:\fbox{1}] {$v_1$}[sibling distance=2cm]
          child{ node[block, label=below:\fbox{1}] {$v_2$}edge from parent node [above left] {2}}
          child{ node[block, label=east:\fbox{2}] (c) {$v_3$}[sibling distance=1.5cm]
              child{ node[block, label=below :\fbox{1}] {$v_4$} edge from parent node [left] {4}} edge from parent node [left]{3}}
          child{ node[block, label=below:\fbox{3}] {$v_5$}
              edge from parent node [above right] {5}
            } edge from parent node [above left] {1}
        }
      child{ node[block, label=south:\fbox{2}] {$v_6$} edge from parent node [above right] {6}}
      ;
    \end{tikzpicture}\]
  We remark that the marked vertex $v_0$ indicates the root of the generalized Brauer tree.
  We have
  \[d(v_3)=d(3)=2,~\deg(v_1)=4,~p(v_4)=v_3=c(v_1,2),~p_4=3=c(1,2),~ s(v_5) =s(5)=3.\]
  The preorder traversal of vertices of $(\mathcal T, v_0)$ is
  \[(v_0,v_1, v_2, v_3, v_4, v_5, v_6).\]
  The preorder traversal of edges of $(\mathcal T, v_0)$ is
  \[(1,2,3,4,5,6).\]
  The preorder traversal of multiplicities of $(\mathcal T, v_0)$ is
  \[(1,1,1,2,1,3,2).\]
\end{example}

\section{Generalized Brauer tree algebras}\label{s:gbta}

We construct generalized Brauer tree algebras by using generalized Brauer trees. We introduce some notion to define generalized Brauer tree algebras.

Let $\mathcal T=(V,E,m,\rho)$ be a generalized Brauer tree.
We take the cyclic order $\rho_v$ around $v$
\[(\rho_v^1<\rho_v^2<\dots<\rho_v^{\deg(v)}),\]
here we note that $\deg(v)$ is equal to the number of edges emanating from $v$ for all $v \in V$.
We put
\begin{gather*}
  Q_0:=E,\\
  Q_1:=\{~\gamma_{v}^t~\mid~ t\in \{1,\dots,\deg(v)\},\quad v\in V\},
\end{gather*}
where we define $\gamma_{v}^t$ to be an arrow from
$\rho_{v}^{t-1}$ to $\rho_{v}^{t}$ for $t\in
  \{2,\dots,\deg(v)\}$ and $\gamma_{v}^{1}$ to be an arrow from $\rho_{v}^{\deg(v)}$ to $\rho_{v}^{1}$.

We put $Q=(Q_0,Q_1)$.
We set $A_0=kQ$ as the path algebra of the quiver $Q$. Let $I$ denote the ideal of $A_0$ generated by the following elements:
\begin{itemize}
  \item $\gamma_v^t\gamma_w^u$ ~~for $v\neq w \in V$, $t\in\ \{1,\dots,\deg(v)\}$ and $u\in\ \{1,\dots,\deg(w)\}.$
  \item $(\gamma_v^{t}\gamma_v^{t+1}\dots\gamma_v^{\deg(i)}\gamma_v^{1}\dots \gamma_{v}^{t-1})^{m_v}\gamma_v^{t}$ ~~for $v\in V$ and $t\in\ \{1,\dots,\deg(v)\}$.
  \item $(\gamma_v^{t}\gamma_v^{t+1}\dots\gamma_v^{\deg(v)}\gamma_v^{1}\dots \gamma_{v}^{t-1})^{m_v}-(\gamma_w^{u}\gamma_w^{u+1}\dots\gamma_w^{\deg(w)}\gamma_w^{1}\dots \gamma_{w}^{u-1})^{m_w}$\\ for $\{v,w\}\in E$, $t\in\ \{1,\dots,\deg(v)\}$ and $u\in\ \{1,\dots,\deg(w)\}$ such that the both source of $\gamma_v^{t}$ and $\gamma_w^{u}$ are equal to the edge $\{v,w\}$.
\end{itemize}
\begin{definition}[{see \cite[\S2.3 and \S 2.4]{Schroll2018}}]\label{def:quiverandalgebrafromgeneralizedBrauertreealgebra}
  The quiver $Q=(Q_0,Q_1)$ is called a \emph{Brauer quiver} of the generalized Brauer tree $\mathcal T$.
  We say that the admissible ideal $I$ is the relation ideal associated to $\mathcal{T}$. The bound quiver algebra $A=A_0/I$ is called the \emph{generalized Brauer tree algebra} associated to the generalized Brauer tree $\mathcal T$.
\end{definition}

\begin{example}
  Let $\mathcal T$ be the generalized Brauer tree described in \cref{ex:generalizedBrauertree}. The generalized Brauer quiver of $\mathcal T$ is as follows:
  \tikzset{maru/.style={{circle,draw,minimum width=13pt, line width=0,minimum height=0,inner sep=1.5}}}

  \[\begin{tikzpicture}[remember picture,->,>={Stealth[round]},shorten >=1pt, node distance=2cm,semithick]
      \node[label=below:, maru](C){3};
      \node[above left = 1.5 of C,label=above left:, maru](A){1};
      \node[below left = 1.5 of C, label = below left:, maru](B){2}; \node[below right = 1.5 of C, label= below right:,maru](E){4};
      \node[right = 2 of C,label=right:, maru](F){5};
      \node[above right = 1.5 of C, label= above right:, maru](G){6};
      \path[->,draw](A) to node[left]{$\gamma_c^1$} (B);
      \path[->,draw](B) to node[below right]{$\gamma_c^2$} (C);
      \path[->,draw](C) to node[above right]{$\gamma_c^3$} (A);
      \path[->,draw](C) to node[below left]{$\gamma_g^1$} (E);
      \path[->,draw](E) to node[below right]{$\gamma_g^2$} (F);
      \path[->,draw](F) to node[above right]{$\gamma_g^3$} (G);
      \path[->,draw](G) to node[above left]{$\gamma_g^4$} (C);

      \path[->,draw] (A) edge [out=120,in=150,looseness=20] node[above] {$\gamma_a^1$} (A)
      (B) edge [out=-150,in=-120,looseness=20] node[below] {$\gamma_b^1$} (B)
      (E) edge [out=-60,in=-30,looseness=20] node[below] {$\gamma_d^1$} (E)
      (F) edge[out=-15,in=15,looseness=20] node[right] {$\gamma_e^1$} (F)
      (G) edge[out=30,in=60,looseness=20] node[above]{$\gamma_f^1$} (G); \end{tikzpicture}
  \]

  The admissible ideal obtained from $\mathcal T$ is generated by
  \begin{gather*}
    \gamma_a^1\gamma_c^1,~\gamma_c^1\gamma_b^1,~\gamma_b^1\gamma_c^2,~\gamma_c^3\gamma_a^1,~ \gamma_c^2\gamma_g^1,~ \gamma_g^1\gamma_d^1,~\gamma_d^1\gamma_g^2,~\gamma_g^2\gamma_e^1,~\gamma_e^1\gamma_g^3,~\gamma_g^3\gamma_f^1,~\gamma_f^1\gamma_g^4,~\gamma_g^4\gamma_c^3,\\
    (\gamma_a^1)^2,~(\gamma_c^1\gamma_c^2\gamma_c^3)^2 \gamma_c^1,~(\gamma_b^1)^4,~ (\gamma_c^2\gamma_c^3\gamma_c^1)^2 \gamma_c^2,~
    (\gamma_c^3\gamma_c^1\gamma_c^2)^2 \gamma_c^3,~
    (\gamma_g^1\gamma_g^2\gamma_g^3\gamma_g^4)^2 \gamma_g^1,\\
    (\gamma_g^2\gamma_g^3\gamma_g^4\gamma_g^1)^2 \gamma_g^2,~(\gamma_d^1)^2,~(\gamma_g^3\gamma_g^4\gamma_g^1\gamma_g^2)^2\gamma_g^3,~(\gamma_e^1)^3,~(\gamma_g^4\gamma_g^1\gamma_g^2\gamma_g^3)^2 \gamma_g^4,~(\gamma_f^1)^4,\\
    \gamma_a^1-(\gamma_c^1\gamma_c^2\gamma_c^3)^2,~(\gamma_b^1)^3-(\gamma_c^2\gamma_c^3\gamma_c^1)^2,~(\gamma_c^3\gamma_c^1\gamma_c^2)^2-(\gamma_g^1\gamma_g^2\gamma_g^3\gamma_g^4)^2,\\~(\gamma_g^2\gamma_g^3\gamma_g^4\gamma_g^1)^2-\gamma_d^1,~(\gamma_g^3\gamma_g^4\gamma_g^1\gamma_g^2)^2-(\gamma_e^1)^2,~(\gamma_g^4\gamma_g^1\gamma_g^2\gamma_g^3)^2-(\gamma_f^1)^3.
  \end{gather*}
\end{example}

\begin{proposition}[see \textup{\cite[Theorem 2.6]{Schroll2018}}] Generalized Brauer tree algebras are finite dimensional indecomposable symmetric special biserial algebras of tame representation type.
\end{proposition}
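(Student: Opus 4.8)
The plan is to verify the four asserted properties in turn, invoking a classical structural theorem only for the tameness. \emph{Finite-dimensionality and indecomposability.} The relation ideal $I$ contains, for every vertex $v$ and every $t$, the path $(\gamma_v^{t}\gamma_v^{t+1}\cdots\gamma_v^{t-1})^{m_v}\gamma_v^{t}$ of length $m_v\deg(v)+1$, together with every length-two path $\gamma_v^{t}\gamma_w^{u}$ with $v\neq w$. Any path in $Q$ either changes cycles at some point, and is then killed by a length-two monomial relation, or stays inside the cycle around a single vertex $v$, and is then killed as soon as its length exceeds $m_v\deg(v)$; hence $I$ is admissible and $A=kQ/I$ is finite-dimensional. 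The quiver $Q$ is connected because $\mathcal T$ is: the vertices of $Q$ are the edges of $\mathcal T$, two edges meeting at a common tree-vertex $v$ are linked by arrows of the cycle around $v$, and any two edges of a tree are joined by a chain of edges with consecutive members sharing a vertex. A bound quiver algebra over a connected quiver is indecomposable as a $k$-algebra.

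\emph{Special biseriality.} At the vertex of $Q$ corresponding to an edge $n=\{v,w\}$ of $\mathcal T$, the arrows with that vertex as source are precisely the successor of $n$ in the cyclic order $\rho_v$ and the successor of $n$ in $\rho_w$, and dually for the arrows with that vertex as target; thus at most two arrows start and at most two end at every vertex of $Q$. For the monomial condition, fix $\beta=\gamma_v^{t}$, whose target is the vertex attached to $\rho_v^{t}$; an arrow out of that vertex is either $\gamma_v^{t+1}$ (the continuation of the cycle around $v$) or an arrow of the cycle around the other endpoint of $\rho_v^{t}$, and in the latter case the composite with $\beta$ is one of the monomial generators $\gamma_v^{t}\gamma_w^{u}\in I$. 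Hence at most one arrow $\gamma$ has $\beta\gamma\notin I$, and symmetrically at most one arrow $\alpha$ has $\alpha\beta\notin I$, so $A$ is special biserial.

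\emph{Symmetry.} A $k$-basis of $A$ is given by the residue classes of the paths of $Q$ not lying in $I$, which are exactly the subpaths of length at most $m_v\deg(v)$ of the cyclic walk around some vertex $v$. For an edge $n=\{v,w\}$ the class $z_n$ of the full cycle $(\gamma_v^{t}\cdots\gamma_v^{t-1})^{m_v}$ with $\rho_v^{t-1}=n$ equals, by the commutativity relations, the class of the full cycle around $w$ beginning at $n$; and the relations show that $z_n$ spans a one-dimensional space annihilated on both sides by the arrow ideal, namely the socle of the indecomposable projective $e_nA$. Define $\lambda\colon A\to k$ by $\lambda(z_n)=1$ for $n\in E$ and $\lambda(b)=0$ for every other basis path $b$. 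Checking on basis paths, $\lambda(ab)=\lambda(ba)$ because both sides vanish unless $ab$, equivalently $ba$, equals some $z_n$; and the pairing $(a,b)\mapsto\lambda(ab)$ is non-degenerate because each basis path $b$ admits a unique basis path $b'$ with $bb'=z_n$ for some $n$. Hence $A$ is a symmetric algebra.

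\emph{Tameness and the main obstacle.} Once $A$ is known to be special biserial, its indecomposable finite-dimensional modules are classified as string and band modules, from which one reads off that $A$ is of tame (possibly finite) representation type; this is the classical fact that special biserial algebras are tame. The step I expect to be the genuine obstacle is the verification of symmetry: one must exhibit an explicit basis of $A$, show that each indecomposable projective $e_nA$ has a one-dimensional socle with a canonical generator $z_n$ (using the commutativity relations to reconcile the two cycles through $n$), and then check both the trace identity and the non-degeneracy of $\lambda$. Finite-dimensionality and indecomposability are immediate from the relations, special biseriality is a finite combinatorial inspection of the Brauer quiver, and tameness is a citation.
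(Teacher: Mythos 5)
The paper offers no proof of this proposition; it is cited verbatim from Schroll's survey. Your proposal therefore supplies a verification where the paper delegates to a reference, so there is no "paper's approach" to compare against. Your outline is essentially the standard argument for Brauer graph algebras restricted to trees, and it is correct in substance: the monomial and length relations kill every path that switches cycles or exceeds the cycle length $m_v\deg(v)$, giving finite-dimensionality; connectedness of the Brauer quiver follows from connectedness of $\mathcal T$, giving indecomposability; at each Brauer-quiver vertex there are exactly two (possibly degenerate) cycles, and the length-two monomial relations immediately force the special biserial axioms; the symmetrizing form $\lambda$ supported on the one-dimensional socles $\langle z_n\rangle$ is the usual choice, with the trace identity checked by cyclic rotation of the full walk around a vertex; and tameness follows from the Wald--Waschb\"usch / Butler--Ringel / Dowbor--Skowro\'nski classification of modules over special biserial algebras.

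Two points you glossed over slightly, neither fatal. First, for non-degeneracy of $(a,b)\mapsto\lambda(ab)$ you must handle linear combinations, not just single basis paths; this works because, for a fixed basis path $b$ ending at vertex $n$, at most one basis path $p$ can satisfy $pb=z_n$ (namely the complementary arc of the cycle), so distinct contributions cannot cancel. Second, when a tree-vertex $v$ has $\deg(v)=1$ and $m_v=1$, the "full cycle" around $v$ is a single arrow, so the generator $(\gamma_v^1)-(\gamma_w^u\cdots)^{m_w}$ is not in $R_Q^2$, and $I$ fails the strict admissibility condition $I\subseteq R_Q^2$. The resulting quotient is still finite-dimensional and agrees with the intended algebra after eliminating the redundant arrow; the paper itself uses the word "admissible" loosely here, so you inherit rather than introduce this imprecision, but a careful write-up should flag the truncated-leaf case.
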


\section{Modules over a rooted generalized Brauer tree algebra}\label{s:modules}
We describe projective and simple modules over a generalized Brauer tree algebra associated to a rooted generalized Brauer tree using the terminology introduced in \cref{terminology}.
Let $k$ be an algebraically closed field and $A$ a generalized Brauer tree $k$-algebra associated to a generalized Brauer tree $\mathcal T=(V,E,m,\rho)$. Fix a vertex $v_0$ of $\mathcal T$.
Let $Q=(Q_0,Q_1)$ be a generalized Brauer quiver of $A$. Let $I$ denote the relation ideal associated to $\mathcal T$.

We take the depth function $d$, the degree function $\deg$, the parent map $p$, the $j$th child $c( {-}, j)$ and a sibling index map $s$ of $(\mathcal T, v_0)$. We set the vertex $v_n$ to hold $\{v_n,p(v_n)\}=n$ for all $n\in E$.
For all $v\in V$,
we take the cyclic order $\rho_v$ around $v$
\[(\rho_v^1<\rho_v^2<\dots<\rho_v^{\deg(v)})\]
to hold $\rho_v^{\deg(v)}=\{v,p(v)\}$. We have
\begin{gather*}
  Q_0=E,\\
  Q_1=\{~\gamma_{v}^t~\mid~ t\in \{1,\dots,\deg(v)\},\quad v\in V\},
\end{gather*}
where we define $\gamma_{v}^t$ is an arrow from
$\rho_{v}^{t-1}$ to $\rho_{v}^{t}$ for $t\in
  \{2,\dots,\deg(v)\}$ and $\gamma_{v}^{1}$ is an arrow from $\rho_{v}^{\deg(v)}$ to $\rho_{v}^{1}$.

For $v\in V$ and $j\in \{1,\dots,\deg(v)\}$, we define $c(v,0)=v$, $c(v,j+\deg(v)\ell)= c(v,j)$ and $\gamma_v^{j+\deg(v)\ell}=\gamma_v^j$ for all $\ell\in \mathbb Z$ for ease of notation. Using an abuse of notation, for $n\in E$ and $j\in \mathbb Z$, we represent $\gamma_{v_n}^j$ as $\gamma_n^j$ and $\gamma_{v_0}^j$ as $\gamma_0^j$.
Similarly, for $n\in E$ and $j\in \{1,\dots,\deg(n)\}$, we define $c(n,0)=n$, $c(n,j+\deg(n)\ell)= c(n,j)$ and $\gamma_n^{j+\deg(n)\ell}=\gamma_n^j$ for all $\ell\in \mathbb Z$ for convenience.

We denote by $\varepsilon_n \in kQ$ the stationary path of $n$ for all $n \in Q_0$. The set $\{e_n := \varepsilon_n + I \mid n \in Q_0\}$ is a complete set of primitive
orthogonal idempotents of the bound quiver algebra $A$. We denote by $P_n$ an indecomposable projective $A$-module $e_nA$ and by $S_n$
a quotient module of $P_n$ by the radical $\rad(P_n)$, which is a simple module indexed by $n$ for every $n\in E=Q_0$. We remark that
the set $\{P_n \mid n \in Q_0\}$ is a complete set of all indecomposable projective $A$-modules up to isomorphism. The set $\{S_n \mid n \in Q_0\}$ is a complete set of all simple $A$-modules up to isomorphism.
We denote by $[\gamma_n^j:\ell]$ the path $\gamma_n^j\gamma_n^{j+1}\gamma_n^{j+2}\dots\gamma_n^{j+\ell-1}$ for every $n\in Q_0$ and $j, \ell> 0$.
We denote the vector space generated by a subset $X$ of $A$ by $\langle X \rangle_k$.
We represent $A$-modules as quotients of certain direct sums of vector spaces contained in $A$, obtained by right-multiplying elements of $A$ if they are closed under such multiplication. To describe the radical series of an $A$-module, we use a boxed column where each entry represents the index of a simple module.
\begin{remark}\label{indecprojstr}
  For each $n\in E$, it holds that $P_n/\rad(P_n)
    \cong S_n$, \[\soc(P_n)\cong \langle \{\,[\gamma_{p_n}^{s(n)+1}:\deg(p_n)m_{p_n}]+I=[\gamma_{n}^1:\deg(n)m_{n}]+I\}\rangle_k \cong S_n\] and $\rad(P_n)/\soc(P_n)$ is isomorphic to a direct sum of two uniserial modules we denote by $U_n^{+}$ and $U_n^{-}$:
  \begin{gather*}
    U_n^{+}=\langle \{\,[\gamma_{p_n}^{s(n)+1}:\ell]+I+\soc(P_n)\,\mid\, \ell\in\{1,\dots,\deg(p_n)m_{p_n}-1\}\}\,\rangle_k \\
    \cong
    \fbox{$\begin{matrix}
          {c(p_n,s(n)+1)}\\{c(p_n,s(n)+2)} \\\vdots\\ {c(p_n,\deg(p_n)m_{p_n}+s(n)-2)}\\{c(p_n,\deg(p_n)m_{p_n}+s(n)-1)}
        \end{matrix}$},\\
    U_n^{-}=\langle \{\,[\gamma_{n}^1:\ell] + I +\soc(P_n)\,\mid\, \ell\in\{1,\dots,\deg(n)m_{n}-1\}\}\,\rangle_k \\
    \cong \,
    \fbox{$\begin{matrix}
          {c(n,1)}\\{c(n,2)} \\\vdots\\ {c(n,\deg(n)m_{n}-2)}\\{c(n,\deg(n)m_{n}-1)}
        \end{matrix}$}\,.
  \end{gather*}
\end{remark}

Given $i,j\in Q_0$, let $[v_j\to v_i]$ denote the set of all paths from $v_j$ to $v_i$. We have the following $k$-isomorphism:
\[
  \begin{tikzcd}[row sep=tiny]
    \pathtohom{}: \langle~{[v_j\to v_i]}~\rangle_k/\varepsilon_j I \varepsilon_i \rar[]& \Hom_{A}(P_i,P_j)\\
    \rotatebox{90}{$\in$}& \rotatebox{90}{$\in$}\\
    \sigma + \varepsilon_j I \varepsilon_i\rar[mapsto]& (e_i a~{\mapsto}~ \sigma e_i a)
  \end{tikzcd}\]
We call $\pathtohom{(\sigma+\varepsilon_j I \varepsilon_i)}$ the homomorphism corresponding to $\sigma+\varepsilon_j I \varepsilon_i$.
By abuse of notation,
we denote $\pathtohom{(\sigma+\varepsilon_j I \varepsilon_i)}$ by $\pathtohom{\sigma}$.

\begin{proposition}\label{dimHom}
  We have
  \[\dim \Hom_A(P_i, P_j)=\begin{cases}
      0           & (i\cap j= \emptyset), \\ m_v & (i\cap j= \{v\}),\\
      m_v + m_{w} & (i=j =\{v,w\}),
    \end{cases}\]
  for $i,j\in E$.
\end{proposition}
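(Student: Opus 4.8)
The plan is to compute $\dim \Hom_A(P_i, P_j)$ by counting, in a $k$-basis of the algebra, the paths from $v_j$ to $v_i$ modulo the relations, using the $k$-isomorphism $\langle [v_j \to v_i] \rangle_k / \varepsilon_j I \varepsilon_i \cong \Hom_A(P_i, P_j)$ recalled just before the statement. So it suffices to determine a basis of $\varepsilon_j A \varepsilon_i$, i.e. the $i$-$j$ component of the algebra. Since $A$ is a special biserial (in fact Brauer graph) algebra, a convenient basis of $\varepsilon_j A \varepsilon_i$ is given by the (residues of) paths that occur as subpaths of the ``relation cycles'' $[\gamma_v^t : \deg(v)m_v]$ around the vertices $v$ of $\mathcal T$. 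Concretely I would first record, from \cref{indecprojstr}, that the nonzero paths starting at an edge $i = \{v,w\}$ are: the stationary path $\varepsilon_i$; the proper subpaths $[\gamma_v^{t_v}:\ell]$ of the cycle at $v$ for $\ell \in \{1,\dots,\deg(v)m_v-1\}$ (where $t_v$ is chosen so that $\gamma_v^{t_v}$ starts at $i$); the proper subpaths $[\gamma_w^{t_w}:\ell]$ of the cycle at $w$ for $\ell \in \{1,\dots,\deg(w)m_w-1\}$; and the top socle elements $[\gamma_v^{t_v}:\deg(v)m_v] = [\gamma_w^{t_w}:\deg(w)m_w]$, which coincide in $A$ by the commutativity relations and span $\soc(P_i) \cong S_i$.

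Next I would read off, for each such basis path, its terminal vertex (edge), and count how many terminate at $j$. This is a case analysis on the position of $i$ and $j$ in the tree. \textbf{Case $i \cap j = \emptyset$:} any path from $v_j$ to $v_i$ would have to pass through an intermediate edge, but every nonzero path in $A$ stays within the ``fan'' of a single vertex (the relations $\gamma_v^t \gamma_w^u = 0$ for $v \neq w$ kill anything else, together with the fact that consecutive edges in a nonzero path share a vertex); hence there is no nonzero path and the dimension is $0$. \textbf{Case $i \cap j = \{v\}$, $i \neq j$:} exactly the paths lying entirely in the cycle at $v$ contribute. Among $[\gamma_v^{t_v}:\ell]$ for $\ell = 0,\dots,\deg(v)m_v$, the terminal edge is $\rho_v^{t_v + \ell - 1}$ (with the convention $\ell = 0$ giving $i$ itself), and this equals $j$ precisely when $\ell \equiv (\text{position of } j) - (\text{position of } i) \pmod{\deg(v)}$; since $\ell$ ranges over $\deg(v)m_v + 1$ consecutive values (but the two extreme values $\ell = 0$ and $\ell = \deg(v)m_v$ give the paths $\varepsilon_i$ and the socle element, which land at $i \neq j$), one gets exactly $m_v$ such $\ell$. \textbf{Case $i = j = \{v,w\}$:} now the stationary path $\varepsilon_i$ contributes $1$; the proper subpaths at $v$ returning to $i$ contribute $m_v - 1$ (the multiples of $\deg(v)$ among $1,\dots,\deg(v)m_v - 1$); the proper subpaths at $w$ returning to $i$ contribute $m_w - 1$; and the socle element contributes $1$; summing gives $1 + (m_v - 1) + (m_w - 1) + 1 = m_v + m_w$.

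The routine but slightly fiddly part is bookkeeping the index arithmetic: matching the cyclic labels $\rho_v^\bullet$, the shift convention $\gamma_v^{j + \deg(v)\ell} = \gamma_v^j$, and correctly identifying which single path $[\gamma_v^{t_v}:\deg(v)m_v]$ represents $\soc(P_i)$ so that it is not double-counted with the $w$-side cycle. I expect the main obstacle to be justifying rigorously that the listed paths form a $k$-basis of $\varepsilon_j A \varepsilon_i$ — i.e.\ that the relation ideal $I$ is spanned (in the relevant degrees) exactly by the zero-relations plus the commutativity relations, so that no further collapses occur. This follows from the standard structure theory of Brauer graph / special biserial algebras (the bound quiver presentation in \cref{def:quiverandalgebrafromgeneralizedBrauertreealgebra}, cf.\ \cite{Schroll2018}) and from the explicit description of $\rad(P_n)/\soc(P_n)$ as a sum of two uniserials in \cref{indecprojstr}, which already exhibits the nonzero proper paths out of each edge. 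Once that basis is in hand, the three cases above are immediate counts, and assembling them yields the claimed formula.
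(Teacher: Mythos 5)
Your proposal is correct and takes essentially the same route as the paper: the paper's one-line proof says to count composition factors isomorphic to $S_i$ in $P_j$ using the structure from \cref{indecprojstr}, and your path count in $\varepsilon_j A\varepsilon_i$ is exactly that same quantity $\dim e_j A e_i$, carried out via the same structural description of $P_j$. Your detailed case analysis (and index bookkeeping) checks out; the only slips are cosmetic direction/name swaps between $i$ and $j$, harmless here because the formula is symmetric and $A$ is symmetric.
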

\begin{proof}
  The fact follows from counting the composition factors isomorphic to $S_i$ of the projective module $P_j$. The structures of indecomposable projective modules are described in \cref{indecprojstr}.
\end{proof}

\section{Main results}\label{s:main}

We use the hypotheses and notation made in \cref{s:modules}.
Let $d$ be a depth function and $r$ the height of $(\mathcal T,v_0)$. We notice that the domain of the function $d$ is extended to the set of edges.
Let $\mathcal{S}$ be a complete set of representatives of isomorphism classes of simple $A$-modules.
We take \[\mathcal{S}_i=\{S_n \in \mathcal{S} \mid d(n)\ge r-i\}\]
for $i\in\{-1,0,1, \dots, r-1\}$.
Then, we have a filtration \[\mathcal{S}_{\bullet}:\emptyset = \mathcal{S}_{-1}\subseteq \mathcal{S}_{0} \subseteq \mathcal{S}_{1}\subseteq \dots \subseteq \mathcal{S}_{r-1}=\mathcal{S}\]
of $\mathcal{S}$. We remark that $S_n\in \mathcal S_i-\mathcal S_{i-1}$ is equivalent to $r-d(n)=i$ for all $n\in E$ and $i\in\{0,\dots,r-1\}$.
In this case, the composition factors of $U_n^+$ are in $\mathcal S_i$ but not in $\mathcal S_{i-1}$ except for the composition factor $S_{p_n}\in\mathcal S_{i+1}$. The composition factors of $U_n^{-}$ are in $\mathcal S_{i-1}$ but not in $\mathcal S_{i-2}$ except for the composition factor $S_n\in \mathcal S_{i}$.

Let $p$ be a parent map and $s$ a sibling index map of $(\mathcal{T},v_0)$. For convenience, we define $p^0$ as the identity map and define inductively $p^{\ell+1}$ as the composition of maps $p$ with $p^{\ell}$ for $\ell \ge 0$.
Given $i\in\{0,\dots,r-1\}$ and $n\in E$ satisfying $S_n\in \mathcal S_i-\mathcal S_{i-1}$, we define a complex $T_{n}=(T_n^{\ell}, d_n^{\ell})_{\ell \in \mathbb Z}$ of projective $A$-modules described as follows:
\begin{itemize}
  \item If $\ell < -(r-1)$ or $-i<\ell$, then $T_{n}^{\ell}=0$ and $d_{n}^{\ell}=0$.
  \item If $\ell=-i$, then $T_{n}^{\ell}=P_{n}$ and $d_{n}^{\ell}=0$.
  \item If $-(r-1)\le\ell< -i$, then $T_{n}^{\ell}=P_{p^{-\ell -i}_n}$ and $d_{n}^{\ell}:T_n^{\ell}\to T_n^{\ell+1}$ is a minimal right $\addcat(\{P_n\mid S_n\in \mathcal S_{-\ell}\})$-approximation,
        explicitly a homomorphism
        \[\pathtohom{[\gamma_{p^{-\ell -i}_n}^{s(p^{-\ell-i-1}_n)+1}:\deg(p^{-\ell-i}_n)-(s(p^{-\ell-i-1}_n)+1)]}.\]

\end{itemize}
\begin{proposition}\label{treetostarcomplexisdecreasingperverse}
  The complex $T=\bigoplus_{n=1}^N T_n$ is a tilting complex. An equivalence $F:=\Hom_A^{\bullet}(T,{-}):\derb(A)\to \derb(\End_{\homob(\projcat A)}(T))$ induces the decreasing perverse with respect to the filtration $\mathcal{S}_{\bullet}$.
\end{proposition}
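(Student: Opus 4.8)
The plan is to show that the complex $T=\bigoplus_{n=1}^N T_n$ we have written down is literally an instance of the complex $\bigoplus_{S\in\mathcal S}T_S$ appearing in \cref{decreasing}, applied to the algebra $\Gamma=A$ with the filtration $\mathcal S_\bullet$ and the map $p(i)=-i$. Once that identification is made, the proposition follows immediately from \cref{decreasing}, which guarantees that such a complex is a tilting complex and that $F=\Hom_A^\bullet(T,{-})$ is perverse relative to $(\mathcal S_\bullet,\mathcal S'_\bullet,p)$ with $p(i)=-i$, i.e.\ decreasing perverse relative to $\mathcal S_\bullet$. So the whole content is a verification that the explicit data we have specified — the terms $T_n^\ell=P_{p_n^{-\ell-i}}$ and the differentials $d_n^\ell=\pathtohom{[\gamma_{p_n^{-\ell-i}}^{s(p_n^{-\ell-i-1})+1}:\deg(p_n^{-\ell-i})-(s(p_n^{-\ell-i-1})+1)]}$ — coincide with the inductive recipe of \cref{decreasing}.

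Concretely, I would proceed as follows. First, fix $n\in E$ with $S_n\in\mathcal S_i-\mathcal S_{i-1}$, i.e.\ $r-d(n)=i$, so that by the recipe $T_n$ is supported in degrees $-(r-1),\dots,-i$ and $T_n^{-i}=P_n=P(S_n)$; this matches the base case $T_S^{-i}=P(S)$. Then I would argue by downward induction on the degree $-j$ (from $j=i$ to $j=r-1$): assuming $T_n^{-j}=P_{p_n^{j-i}}$ has been constructed together with the incoming differential, I compute $K^{-j}=\Ker(d_n^{-j})$ — but at the top degree $j=i$ this is all of $P_n$, and in general $K^{-j}$ is the image of the map into $T_n^{-j}$ coming from degree $-j-1$, which by the path description is the submodule $\langle[\gamma_{p_n^{j-i}}^{s(p_n^{j-i-1})+1}:\,\cdot\,]\rangle_k$ of $P_{p_n^{j-i}}$. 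Using \cref{indecprojstr} and the structure of $U^\pm$ described just before the proposition (the composition factors of $U_m^+$ lie in $\mathcal S_{r-d(m)}$ and those of $U_m^-$ in $\mathcal S_{r-d(m)-1}$), I identify the smallest submodule $M^{-j}\subseteq K^{-j}$ with $K^{-j}/M^{-j}$ having all composition factors in $\mathcal S_j$: this turns out to be the ``$U^+$-type'' tail starting at the child edge $c(p_n^{j-i-1},\cdot)$, whose top composition factor is $S_{p_n^{j-i+1}}$ — and crucially $S_{p_n^{j-i+1}}\in\mathcal S_{j-1}$ because $d(p_n^{j-i+1})=r-(j-1)$, which is why the approximation goes one step further up the tree. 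The projective cover of $M^{-j}$ is then $P_{p_n^{j-i+1}}=T_n^{-j-1}$, and the composite with the inclusion $M^{-j}\hookrightarrow P_{p_n^{j-i}}$ is exactly the path homomorphism we wrote; this closes the induction. Finally one checks the process stops at degree $-(r-1)$, i.e.\ that when $j=r-1$ the kernel $K^{-(r-1)}$ already has all composition factors in $\mathcal S_{r-1}=\mathcal S$, so no further term is needed.

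The step I expect to be the main obstacle is the middle one: identifying $M^{-j}$, the \emph{smallest} submodule of $K^{-j}$ with quotient composition factors in $\mathcal S_j$, and checking that its projective cover is precisely $P_{p_n^{j-i+1}}$ with the stated path homomorphism. This requires carefully tracking, through the uniserial pieces $U_m^{\pm}$ of \cref{indecprojstr}, which composition factors of the relevant submodule of $P_{p_n^{j-i}}$ lie in $\mathcal S_j$ versus $\mathcal S_{j-1}$, and in particular pinning down that the unique composition factor \emph{not} in $\mathcal S_{j-1}$ sitting at the top is $S_{p_n^{j-i+1}}$ together with the matching of its projective cover's structure to the indices in the path $[\gamma_{p_n^{j-i+1}}^{s(p_n^{j-i})+1}:\deg(p_n^{j-i+1})-(s(p_n^{j-i})+1)]$. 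The bookkeeping with the child/sibling indices $s(-)$ and the wrap-around conventions $c(v,j+\deg(v)\ell)=c(v,j)$ is where the real care is needed; the rest is a direct appeal to \cref{decreasing}. It also remains to observe that the differential in each degree, being a projective cover of a submodule, is by construction a minimal right $\addcat(\{P_n\mid S_n\in\mathcal S_{-\ell}\})$-approximation, so the two descriptions of $d_n^\ell$ agree.
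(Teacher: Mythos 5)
Your overall strategy — recognize $T=\bigoplus_n T_n$ as a literal instance of the $\bigoplus_{S}T_S$ construction preceding \cref{decreasing} and then invoke that proposition — is exactly the paper's approach, and the inductive bookkeeping you outline follows the paper's proof. However, two concrete errors appear in the step you yourself flag as the crux.

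First, you describe $K^{-j}$ as ``the image of the map into $T_n^{-j}$ coming from degree $-j-1$.'' In the recipe of \cref{decreasing}, $K^{-j}=\Ker(d^{-j}\colon T_S^{-j}\to T_S^{1-j})$ is the kernel of the \emph{outgoing} differential; the image of the incoming map is $M^{-j}$, a generally \emph{proper} submodule of $K^{-j}$. The paper's proof explicitly records the discrepancy: it first writes out $M^{\ell}$ and then describes $K^{\ell}$ as a strictly larger indecomposable module (a sum of three $k$-subspaces, including the socle and a ``$U^-$-type'' piece coming from $\gamma_{p_n^{-\ell-i}}^1$), precisely so that one can check $M^{\ell}$ is the \emph{smallest} submodule with all composition factors of $K^{\ell}/M^{\ell}$ in $\mathcal S_{-\ell}$. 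Your identification collapses $K^{-j}$ with $M^{-j}$ and skips the verification of minimality.

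Second, the sign in your depth computation is off, and it inverts the crucial inclusion. From $d(n)=r-i$ one gets $d(p_n^{j-i+1})=d(n)-(j-i+1)=r-j-1$, hence $S_{p_n^{j-i+1}}\in\mathcal S_{j+1}-\mathcal S_j$, \emph{not} $\mathcal S_{j-1}$ as you assert via ``$d(p_n^{j-i+1})=r-(j-1)$.'' This matters: minimality of $M^{-j}$ requires that the top composition factor of $M^{-j}$, namely $S_{p_n^{j-i+1}}$, be \emph{outside} $\mathcal S_j$, so that one cannot peel off the top and still have quotient factors in $\mathcal S_j$. If, as you claim, that factor lay in $\mathcal S_{j-1}\subseteq\mathcal S_j$, then $M^{-j}$ would not be minimal and the inductive step would fail. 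The argument works, but for the opposite reason from the one you give. You should also double-check the child index you write as $c(p_n^{j-i-1},\cdot)$; the submodule $M^{-j}$ of $P_{p_n^{j-i}}$ is cut out using $\gamma_{p_n^{j-i+1}}^{s(p_n^{j-i})+1}$ in the paper, matching the $U^+$-structure of $P_{p_n^{j-i}}$ from \cref{indecprojstr} (there $p_m=p_n^{j-i+1}$ and $s(m)=s(p_n^{j-i})$ for $m=p_n^{j-i}$), and your exponent appears shifted.
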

\begin{proof}
  For any $S_n\in\mathcal S_i - \mathcal S_{i-1}$, it is enough to show that $T_n$ is constructed in the same manner as the complex in \cref{decreasing}. First of all, $T_n=(T_n^{\ell}, d_n^{\ell})_{\ell\in\mathbb Z}$ is a complex with nonzero terms in degrees $-(r-1),\dots,-i$. Put $T_n^{-i}=P(S_n)=P_n$. For $\ell \in \{-(r-1),\dots, -i\}$, having constructed $T_{n}^{u}$ for all $u\in \{\ell, \dots, -i\}$, let $M^{\ell}$ be \[\langle \{\,[\gamma_{p^{-\ell-i+1}_n}^{s(p_n^{-\ell-i})+1}:j] + I\,\mid\, j\in\{\deg(p^{-\ell-i+1}_n)-s(p^{-\ell-i}_n),\dots,\deg(p^{-\ell-i+1}_n)m_{p^{-\ell-i+1}_n}\}\}\,\rangle_k \]\[\cong\,
    \fbox{$\begin{matrix}
          {c(p^{-\ell-i+1}_n,\deg(p^{-\ell-i+1}_n))} \\ {c(p^{-\ell-i+1}_n,\deg(p^{-\ell-i}_n)+1)}\\\vdots\\ {c(p^{-\ell-i+1}_n,s(p_n^{-\ell-i})+\deg(p^{-\ell-i+1}_n)m_{p^{-\ell-i+1}_n}-1)}\\{c(p^{-\ell-i+1}_n,s(p_n^{-\ell-i})+\deg(p^{-\ell-i+1}_n)m_{p^{-\ell-i+1}_n})}
        \end{matrix}$}\,.\]
  Define $d_n^{\ell-1}: T_n^{\ell-1}\to T_n^{\ell}$ to be the composition of a projective cover $T_{n}^{\ell-1}\to M^{\ell}$ with the inclusion of $M^{\ell}$ into $T_n^{\ell}$. This shows that $T_n^{\ell-1}$ is isomorphic to $P_{c(p_n^{-\ell-i+1}, \deg(p_n^{-\ell-i+1}))}=P_{p_n^{-\ell-i+1}}$ and $d_n^{\ell-1}$ is a minimal right $\addcat(\{P_n\mid S_n\in \mathcal S_{-\ell}\})$-approximation by the definition of $K^{\ell}$ and the proof of \cref{dimHom}. Then $M^{\ell}$ is the smallest submodule of $K^{\ell}:=\Ker(d_n^{\ell}:T_n^{\ell}\to T_n^{\ell+1})$ such that all composition factors of $K^{\ell}/M^{\ell}$ lie in $\mathcal{S}_{-\ell}$ since if $\ell \neq -i$, $K^{\ell}$ is an indecomposable $A$-module which is the direct sum of $k$-vector spaces
  \begin{align*}
    \langle \{ & \,[\gamma_{p^{-\ell-i+1}_n}^{s(p_n^{-\ell-i})+1}:j] + I \,\mid\, \\ &j\in\{(m_{p^{-\ell-i+1}_n}-1)\deg(p^{-\ell-i+1}_n)+1,\dots,m_{p^{-\ell-i+1}_n}\deg(p^{-\ell-i+1}_n)-s(p^{-\ell-i}_n)-1\}\,\}\rangle_k,
  \end{align*}
  \[\langle \{\,[\gamma_{p^{-\ell-i}_n}^{1}:j] + I \,\mid\, j\in\{1,\dots,\deg(p^{-\ell-i}_n)m_{p^{-\ell-i}_n}-1\}\}\,\rangle_k \]
  and \[\soc(K^{\ell})=\langle \{\,[\gamma_{p_n^{-\ell-i}}^1:\deg(p_n^{-\ell-i})m_{p_n^{-\ell-i}}]+I\}\rangle_k.\]
  We note that $K^{\ell}/\soc(K^{\ell})$ is the direct sum of two uniserial $A$-modules as follows:
  \[\fbox{$\begin{matrix}
          {c(p^{-\ell-i+1}_n,s(p^{-\ell-i}_n)+1)} \\ {c(p^{-\ell-i+1}_n,s(p^{-\ell-i}_n)+2)}\\\vdots\\ {c(p^{-\ell-i+1}_n,\deg(p^{-\ell-i+1}_n)-2)}\\{c(p^{-\ell-i+1}_n,\deg(p^{-\ell-i}_n)-1)}
        \end{matrix}$}\, ,\quad
    \fbox{$\begin{matrix}
          {c(p^{-\ell-i}_n,1)} \\ {c(p^{-\ell-i}_n,2)}\\\vdots\\ {c(p^{-\ell-i}_n,\deg(p^{-\ell-i}_n)m_{p^{-\ell-i}_n}-2)}\\{c(p^{-\ell-i}_n,\deg(p^{-\ell-i}_n)m_{p^{-\ell-i}_n}-1)}
        \end{matrix}$}\,.\]

\end{proof}
We put $B=F(T)$. Two algebras $A$ and $B$ are derived equivalent by \cref{prop:Rickard}. For $S_n\in \mathcal{S}$, we denote $F(T_n)$ by $Q_n$. The module $Q_n$ is an indecomposable projective $B$-module. We denote $Q_n/\rad(Q_n)$ by $V_n$. The module $V_n$ is a simple $B$-module. The construction of $T$ is the same as that stated in \cite[Sec.~4]{MEMBRILLOHERNANDEZ1997231}.
\begin{proposition}[{\cite[Theorem 7.3.]{MEMBRILLOHERNANDEZ1997231}}]\label{perverseandtreetostar}
  The complex $T$ is a tree-to-star tilting complex, as identified in the work of \cite{MEMBRILLOHERNANDEZ1997231}.
\end{proposition}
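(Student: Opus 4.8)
The statement is an identification, so the plan is to recognize our complex $T=\bigoplus_{n=1}^N T_n$ as the very tilting complex written down by Membrillo-Hern\'{a}ndez in \cite[Sec.~4]{MEMBRILLOHERNANDEZ1997231}, whence the assertion follows directly from \cite[Theorem~7.3]{MEMBRILLOHERNANDEZ1997231}. First I would recall their construction in the language of \cref{terminology}: for each edge $n\in E$ one walks from $n$ up the parent chain $n,p(n),\dots,p^{d(n)-1}(n)$ (the unique path joining $n$ to the root $v_0$), places the indecomposable projective $P_{p^j_n}$ in homological degree $d(n)-r-j$ for $j=0,\dots,d(n)-1$ — i.e.\ in degrees $d(n)-r,\dots,1-r$ — and connects consecutive terms by the canonical ``hook'' homomorphisms dictated by the cyclic orders. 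The direct sum of these complexes over all edges is the tree-to-star complex, and \cite[Theorem~7.3]{MEMBRILLOHERNANDEZ1997231} asserts that it is a tilting complex whose endomorphism algebra is the generalized Brauer tree algebra of a star-shaped generalized Brauer tree with the same number of edges as $\mathcal T$ and the same multiset of multiplicities, arranged according to the Green walk.

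Next I would carry out the comparison term by term and differential by differential. Setting $i=r-d(n)$ and $j=-\ell-i$, the recipe of \cref{s:main} gives $T_n^\ell=P_{p^j_n}$ exactly when $j\in\{0,1,\dots,d(n)-1\}$ (so $\ell$ runs from $-i=d(n)-r$ down to $-(r-1)=1-r$) and $T_n^\ell=0$ otherwise; thus the nonzero terms of $T_n$ are precisely the projectives indexed by the edges on the path from $n$ to $v_0$, occurring in exactly the homological degrees above. Moreover, the proof of \cref{treetostarcomplexisdecreasingperverse} identifies the nonzero differentials: $d_n^\ell\colon P_{p^j_n}\to P_{p^{j-1}_n}$ is the minimal right $\addcat(\{P_m\mid S_m\in\mathcal S_{-\ell}\})$-approximation, given explicitly by $\pathtohom{[\gamma_{p^j_n}^{s(p^{j-1}_n)+1}:\deg(p^j_n)-s(p^{j-1}_n)-1]}$, which is exactly Membrillo-Hern\'{a}ndez's hook map under the substitution $j=-\ell-i$. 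Hence $T$ coincides, possibly up to an overall homological shift fixing their normalization, with the complex of \cite[Sec.~4]{MEMBRILLOHERNANDEZ1997231}, and the proposition follows.

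I do not expect a substantive obstacle here; the content is entirely a matter of matching conventions. The only point needing genuine care is the dictionary between our rooted-tree bookkeeping (depth, parent map, sibling index map, and the normalization $\rho_v^{\deg(v)}=\{v,p(v)\}$ of the cyclic orders) and the Green-walk indexing employed in \cite{MEMBRILLOHERNANDEZ1997231} — in particular, verifying that the hook homomorphisms agree on the nose and that the two homological-degree conventions differ only by a global shift. This dictionary has effectively already been set up in the proof of \cref{treetostarcomplexisdecreasingperverse} and in the remark preceding this proposition, so it remains only to record the comparison.
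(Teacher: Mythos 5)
Your proposal is correct and takes essentially the same approach as the paper: the paper offers no separate proof of this proposition, but simply remarks in the sentence preceding it that ``The construction of $T$ is the same as that stated in \cite[Sec.~4]{MEMBRILLOHERNANDEZ1997231}'' and cites \cite[Theorem~7.3]{MEMBRILLOHERNANDEZ1997231}. You merely spell out the term-by-term and differential-by-differential dictionary (the substitution $j=-\ell-i$, the parent chain, the hook maps) that the paper leaves implicit.
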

\begin{remark}[{see \cite{MEMBRILLOHERNANDEZ1997231}}]\label{rem:endomorphism}
  Here, we describe the structure of $B$.
  Let $(e_1,\dots,e_N)$ denote the preorder traversal of edges of $(\mathcal T,v_0)$, where $N$ denotes the number of elements of $E$. We set $e_{N+1}=e_1$ as a matter of convenience.
  For $t\in\{1,\dots,N\}$, we define $\tau_{e_t}=(\tau_{e_t}^{(\ell)})_{\ell\in \mathbb Z}\in \Hom_{\homob(\projcat A)}(T_{e_{t+1}}, T_{e_t})$. If $d(e_{t+1})\le d(e_t)$, we set
  \[\tau_{e_t}^{\ell}=\begin{cases}
      \mathrm{id}                                  & \text{(for $1-r\le \ell < d(e_t)-r$)}, \\
      \pathtohom{\gamma_{p(e_{t+1})}^{s(e_{t+1})}} & \text{(for $\ell=d(e_t)-r$)},          \\
      0                                            & \text{(otherwise)}.
    \end{cases}\]
  If $d(n')> d(n)$, meaning $d(n')=d(n)+1$, we set
  \[\tau_{e_t}^{(\ell)}=\begin{cases}
      \mathrm{id} & \text{(for $1-r\le \ell \le d(e_t)-r$)}, \\
      0           & \text{(otherwise)}.
    \end{cases}\]
  The map $F(\tau_{e_t}): F(T_{e_{t+1}})\to F(T_{e_t})$, ($b\mapsto \tau_n\circ b$) is an irreducible map over $B$-module from $Q_{e_{t+1}}$ to $Q_{e_t}$.
  Similarly, we define $\theta_{e_t}=(\theta_{e_t}^{(\ell)})_{\ell\in\mathbb Z}\in \Hom(T_{e_t}, T_{e_t})$ to be
  \[\theta_{e_t}^{\ell}=\begin{cases}
      \mathrm{id}                            & \text{(for $1-r\le \ell < d(e_t)-r$)}, \\
      \pathtohom{[\gamma_{e_t}^1:\deg(e_t)]} & \text{(otherwise)},
    \end{cases}\]
  The map $F(\theta_{e_t}): F(T_{e_t})\to F(T_{e_t})$ is an irreducible map over $B$-module from $Q_{e_t}$ to $Q_{e_t}$.
  Moreover, the union set of $F(\tau_n)$ and $F(\theta_n)$ where $n$ runs over edges in $E$ is the set of all irreducible maps over $B$-modules up to the scalar multiplication.
  It holds that
  \begin{gather*}
    F(\theta_{e_t})^{m_{e_t}+1}=0, \\
    \bigl(F(\tau_{e_t})\circ F(\tau_{e_{t+1}})\circ\dots\circ F(\tau_{e_{t-1}})\bigr)^{m_{v_0}}\circ F(\tau_{e_t})=0, \\F(\theta_{e_t})^{m_{e_t}}=\bigl(F(\tau_{e_t})\circ F(\tau_{e_{t+1}})\circ\dots\circ F(\tau_{e_{t-1}})\bigr)^{m_{v_0}},\\
    F(\theta_{e_t})\circ F(\tau_{e_t})=0,\\
    F(\tau_{e_t})\circ F(\theta_{e_{t+1}})=0
  \end{gather*}
  for all $t\in \{1,\dots,N\}$.
  This implicates that $B$ is a generalized Brauer tree algebra associated to a generalized Brauer star, identified with a rooted generalized Brauer tree having the same preorder traversal of multiplicities as $\mathcal T$. Here, we call generalized Brauer trees having a root such that the depth of the all vertices except for the root are equal to $1$ generalized Brauer stars.
\end{remark}

We define some modules to describe the images of simple modules through the functor $F^{-1}$ in the following proposition. For $S_n \in \mathcal S$, let $L_n$ denote an $A$-module \[\langle \{\,[\gamma_{n}^{1}:j] + I \,\mid\, j\in\{(\deg(n)-1)m_{n}+1,\dots,\deg(n)m_{n}\}\}\,\rangle_k \]
\[\cong\,
  \fbox{$\begin{matrix}
        {c(n,1)} \\ {c(n,2)}\\\vdots\\{c(n,\deg(n)-2)}\\{c(n,\deg(n)-1)}\\{n}
      \end{matrix}$}\,.\]
We note that these modules are not influenced by multiplicities $m$.
Let $\iota_t:\syzygy[-t]L_n \to I(\syzygy[-t]L_n)$ denote an injective hull of $\syzygy[-t]L_n$ for $t\in \mathbb Z$.
We remark that $I(\syzygy[-1] L_n)$ is isomorphic to $P_{c(p_n, s(n)-1)}$ if $m_n=1$ and $P_{c(p_n, s(n)-1)}\oplus P_n$ if $m_n\ge 2$.

\begin{proposition}\label{simpleminded} For $S_n\in \mathcal{S}_i - \mathcal{S}_{i-1}$, it holds that $F^{-1}(V_n)$ is isomorphic to
  $L_n[i]$ in $\derb(A)$.
\end{proposition}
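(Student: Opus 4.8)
The strategy is to use \cref{simplemindeddecreasing}, which gives an explicit description of $F^{-1}(S')$ for any $S' = V_n$ as a complex $Y_{S_n}$ built by the decreasing construction. Concretely, since $F = \Hom_A^{\bullet}(T,-)$ is the decreasing perverse equivalence relative to $\mathcal{S}_{\bullet}$ coming from \cref{treetostarcomplexisdecreasingperverse}, the complex $Y_{S_n}$ has nonzero terms only in degrees $-i,\dots,0$ (where $S_n \in \mathcal{S}_i - \mathcal{S}_{i-1}$), starts with $Y_{S_n}^{-i} = P(S_n) = P_n$, and is built by successively taking injective hulls after killing the largest quotients whose relevant kernels have composition factors in the appropriate $\mathcal{S}_{j-1}$. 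So the whole proof reduces to identifying this complex, up to isomorphism in $\derb(A)$, with the stalk complex $L_n[i]$, i.e.\ with $L_n$ concentrated in degree $-i$.

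First I would observe that since $F^{-1}(V_n) \cong Y_{S_n}$ has cohomology only in a range and the claim is that it is quasi-isomorphic to a stalk complex $L_n$ in degree $-i$, it suffices to show that $Y_{S_n}$ is exact in all degrees $\neq -i$ and that $H^{-i}(Y_{S_n}) \cong L_n$. Equivalently, one can check directly that the truncation/minimal form of $Y_{S_n}$ is exactly the two-term-or-longer complex whose only homology is $L_n$. The cleanest route: show by downward induction on $j$ from $j=0$ to $j=i$ that the cokernel $C^{1-j}$ (respectively the module $N^{1-j}$) appearing in the decreasing construction is forced, using \cref{indecprojstr}, to be a specific uniserial module read off from the path $[\gamma_n^1 : \ell]$. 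Because $L_n$ involves no multiplicities (as the paper emphasizes right before the statement), and the relevant submodule $M^\ell$ in \cref{treetostarcomplexisdecreasingperverse} is the one generated by $[\gamma^{s(p_n^{\cdots})+1}_{p_n^{\cdots}}:j]$, the Green-walk bookkeeping on depths will show that at each stage the composition factors that must be "pushed up" are precisely those $S_m$ with $d(m) > d(n)$, which is exactly condition $\mathcal{S}_{j-1}$ membership. The terminal step $j = i$ produces $N^{1-i} = P(S_n)/N^{1-i}$ whose socle-quotient structure, after all the injective hulls are taken, yields $L_n$: its radical series is $c(n,1), c(n,2), \dots, c(n,\deg(n)-1), n$, which is the claimed shape.

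An alternative and possibly shorter approach is to avoid re-deriving $Y_{S_n}$ from scratch and instead verify the defining property of $F^{-1}(V_n)$ directly: one shows that $\Hom_A^{\bullet}(T, L_n[i])$ is isomorphic to $V_n$ in $\derb(B)$. For this I would compute $\Hom_{\derb(A)}(T_m, L_n[i][\ell])$ for each edge $m$ and each $\ell$, using \cref{dimHom} together with the explicit differentials of $T_m$ and the explicit structure of $L_n$ from \cref{indecprojstr}. The point is that $L_n$ has a very transparent projective resolution / injective copresentation whose terms are $P_{c(p_n,s(n)-1)}$ and $P_n$, so these Hom-complexes collapse: one expects $\Hom^{\bullet}(T_m, L_n[i])$ to be acyclic unless $m = n$, in which case it is one-dimensional in degree $0$, matching $V_n = Q_n/\rad Q_n$. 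This is essentially a perverse-equivalence compatibility check: the filtration piece $\mathcal{S}_i - \mathcal{S}_{i-1}$ containing $S_n$ corresponds under $F$ to $V_n$, and one is verifying that $L_n[i]$ is a simple-minded preimage, so $L_n$ must have the right composition factors modulo $\mathcal{S}_{i-1}$-stuff in the right cohomological degree $-i = p(i)$.

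The main obstacle will be the careful index bookkeeping in the Green-walk / preorder-traversal notation: keeping track of which $p_n^{\ell}$, $s(p_n^{\ell})$, $\deg$, and $c(-,j)$ appear at each stage of the inductive construction of $Y_{S_n}$, and checking that the "largest quotient with kernel in $\mathcal{S}_{j-1}$" condition is met exactly by the uniserial pieces coming from $U_n^{-}$-type submodules in \cref{indecprojstr}. The homological vanishing itself is routine once the module structures are pinned down, but the notational overhead of matching the abstract construction in \cref{simplemindeddecreasing} against the concrete combinatorics of the rooted generalized Brauer tree is where the real work lies. I would isolate the depth inequality $d(c(n,j)) > d(n)$ for $j \in \{1,\dots,\deg(n)-1\}$ (children have larger depth, hence sit in earlier filtration steps) as the combinatorial lemma that makes the whole argument go through.
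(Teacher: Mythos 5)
Your first approach is essentially the paper's: invoke \cref{simplemindeddecreasing} to identify $F^{-1}(V_n)\cong Y_n$, and then show (as the paper does via its auxiliary Lemma, proved by induction on the degree) that $Y_n$ is a truncated minimal injective resolution of $L_n$ with $Y_n^{-i}=P_n=I(L_n)$ and $Y_n^{0}=\syzygy[-i]L_n$, whence a mapping-cone argument gives $Y_n\simeq L_n[i]$ in $\derb(A)$; your observation that children have strictly larger depth, hence lie in earlier filtration strata, is indeed the decisive combinatorial input. The only slip is the equation $N^{1-i}=P(S_n)/N^{1-i}$, which should read $N^{1-i}\cong L_n$ (the largest submodule of $P_n$ with all composition factors of $N^{1-i}/S_n$ in $\mathcal{S}_{i-1}$), but this does not affect the soundness of the plan.
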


\begin{proof}
  We show that $L_n[i]$ is isomorphic to the complex $Y_n=(Y_n^{\ell}, d_n^{\ell})_{\ell \in \mathbb Z}$ constructed in the same manner as \cref{simplemindeddecreasing}.
  In this proof, we set $\mathcal S_{r+\ell}=\mathcal S$ and $\mathcal S_{-2-\ell}=\emptyset$ for all $\ell \in \mathbb Z_{\ge 0}$ as a matter of convenience.
  We remark that the composition factors of $P_n$ are in $\mathcal S_{i+1}$ but not in $\mathcal S_{i-2}$ by \cref{indecprojstr}.

  If $i=0$, we have $Y_n=Y_n^0=S_n=L_n$. Therefore the proposition is satisfied for $i=0$.

  If $i\neq 0$,
  then $Y_n^{-i} = P_n$.
  A quotient module $L_n/S_n$ is isomorphic to a uniserial module
  \[ \fbox{$\begin{matrix}
          {c(n,1)} \\ {c(n,2)}\\\vdots\\{c(n,\deg(n)-2)}\\{c(n, \deg(n)-1)}
        \end{matrix}$}.\,\]
  If $m_n=1$, then the socle of $P_n/L_n$ is isomorphic to $S_{c(p_n, s(n)-1)}$, which is not in $\mathcal S_{i-1}$.
  If $m_n\ge 2$, then the socle of $P_n/L_n$ is isomorphic to a direct sum of $S_{c(p_n, s(n)-1)}$ and $S_{n}$, both of which are not in $\mathcal S_{i-1}$.
  Therefore, $L_n$ is the largest submodule of $P_n$ such that the all composition factors of $L_n/S_n$ are in $\mathcal S_{i-1}$. We have $Y_n^{-i+1} = I(\syzygy[-1]L_n)$ and $d_n^{-i}$ is $\coker\iota_0:P_n \to P_n/L_n$ if $i=1$, otherwise $d_n^{-i}$ is the composition map of $\coker\iota_0:P_n \to P_n/L_n$ with an injective hull $P_n/L_n \to I(\syzygy[-1] L_n)$.
  Consider the following lemma.
  \begin{lemma}\label{essential}
    For all $0 \le t< i-1$, it holds that \begin{itemize}
      \item $Y_n^{-i+t}=I(\syzygy[-t]L_n)$, $Y_n^{-i+t+1} =I(\syzygy[-t-1] L_n),$
      \item the composition factors of $Y_n^{-i+t+1}$ are in $\mathcal S_{i+t+2}$ but not in $\mathcal S_{i-(t+2)}$, and
      \item $d_n^{-i+t}$ is the composition map of $\coker\iota_t: I(\syzygy[-t] L_n)\to \syzygy[-t-1]L_n$ with $\iota_{t+1}$.
    \end{itemize}
    For $t=i-1$, we have $Y_n^{0} =\syzygy[-i] L_n$ and $d_n^{-1}=\coker\iota_{i-1}: I(\syzygy[-i+1] L_n)\to \syzygy[-i]L_n.$

  \end{lemma}
  \begin{proof}[Proof of \cref{essential}]
    We prove the lemma by the induction on $t\in \{0,\dots, i\}$.
    Consider the case for $t=0$.
    In this case, we have $Y_n^{-i}=P_n= I(L_n)$.
    If $i=1$, we conclude that the lemma is valid since $Y_n^{0}=L_n/S_n=\syzygy[-1]L_n$.
    If $i\ge 2$, then $Y_n^{-i+1}=I(\syzygy[-1]L_n)$. Hence, the composition factors of $Y_n^{-i+1}$ are in $\mathcal S_{i+2}$ but not in $\mathcal S_{i-2}$.

    We prove that the lemma is true for $0<t< i-1$.
    We have $\coker d_n^{-i+t-1}=\coker\iota_t: Y_n^{-i+t}\to \syzygy[-t-1] L_n$, since $d_n^{-i+t-1}=\iota_t \circ \coker \iota_{t-1}$.
    The composition factors of $\syzygy[-t-1] L_n=\image(\coker d_n^{-i+t-1})$ are in the set of the composition factors of $Y_n^{-i+t}$, which is contained in $\mathcal S_{i+t+1}$ but not in $\mathcal S_{i-(t+1)}$ by assumption. Consequently, $\syzygy[-t-1] L_n$ has no proper submodules by which the composition factors of the quotient module of $\syzygy[-t-1] L_n$ are in $\mathcal S_{i-(t+1)}$.
    Hence, $d_n^{-i+t}$ is the composition map of $\coker \iota_t$ with an injective hull $\iota_{t+1}$.
    For a contradiction, assume that $Y_n^{-i+t}=I(\syzygy[-t] L_n)$ has an indecomposable summand $Q$
    such that the composition of $d_n^{-i+t}$ with $\pi$ the projection on $Q$ is $0$. Then $\pi\circ \iota_{t+1}=0$ since $\coker \iota_t$ is an epimorphism. This contradicts our assumption that $\iota_{t+1}$ is an injective hull of $\syzygy[-t-1] L_n$.
    Thus, each indecomposable summands of $Y_n^{-i+t}$ has a nontrivial image through $d_n^{-i+t-1}$. Since the composition factors of $Y_n^{-i+t}$ are in $\mathcal S_{i+t+1}$ but not in $\mathcal S_{i-(t+1)}$, each indecomposable summands of $Y_n^{-i+t}$ is isomorphic to $P_{\ell}$ such that $S_\ell\in \mathcal S_{i+t}-\mathcal S_{i-t}$. By \cref{dimHom}, each indecomposable summands of $Y_n^{-i+t+1}$ is isomorphic to $P_{\ell}$ such that $S_\ell\in \mathcal S_{i+t+1}-\mathcal S_{i-t-1}$. This indicates that the composition factors of $Y_n^{-i+t+1}$ are in $\mathcal S_{i+t+2}$ but not in $\mathcal S_{i-(t+2)}$.

    Similarly, we have $Y_n^0=\syzygy[-i]L_n$ and $d_n^{-1}=\coker \iota_{i-1}$ by the fact that the composition factors of $Y_n^{-1}=\syzygy[-i+1]L_n$ are not in $\mathcal S_{0}$ and by the definition of construction of $d_n^{-1}$ and $Y_n^0$ stated as in \cref{s:perverse}.
  \end{proof}
  \cref{essential} asserts that $Y_n$ is a shifted truncated minimal injective resolution of $L_n$ having $P_n$ in $-i$th term and $\syzygy[-i] L_n$ in $0$th term. Let $f$ be a homomorphism of complexes from $L_n[i]$ to $Y_n$ satisfying $f^{-i}=\iota_0$
  and $f^{\ell}=0$ for all $\ell \in \mathbb Z$ except for $-i$. Since the mapping cone of $f$ is an exact sequence, the homomorphism $f$ is a quasi-isomorphism. Therefore, we have $Y_n \cong L_n[i]$ in $\derb{(A)}$.
\end{proof}

Let $M$ be an indecomposable $(B,A)$-bimodule inducing a stable equivalence of Morita type between $B$ and $A$ making the following diagram commutative:
\[\begin{tikzcd}[]
    \derb(B)\dar[""]\rar["F^{-1}"]&\derb(A)\dar[""]
    \\
    \derb(B)/\homob(\projcat B) \dar[phantom,"\rotatebox{-90}{$\cong$}"] & \derb(A)/\homob(\projcat A) \dar[phantom,"\rotatebox{-90}{$\cong$}"]\\
    \stmodcat{B}\rar["-\otimes_{B} M"]&\stmodcat{A}
  \end{tikzcd} \]
(see \cref{derivedandstable}).
It follows that $V_n\otimes_B M$ is isomorphic to $\syzygy[d(n)-r] L_n$ in $\stmodcat A$ for $n\in E$, since all terms of $Y_n$ are projective except for the $0$th term $\syzygy[d(n)-r] L_n$. By \cite[Theorem 2.1 (ii)]{Linckelmann1996}, $V_n\otimes_B M$ is isomorphic to $\syzygy[d(n)-r] L_n$ in $\modcat A$ for $n\in E$.
Let $P^{\bullet}=(P^t, d_M^t)_{t\in \Z}$ be a minimal projective resolution of the $B^{\op}\otimes_k A$-module $M$. By \cref{prop:hellermove}, we have \[V_n\otimes_B \syzygy[t-1] M \cong \syzygy[t-1](V_n\otimes_B M)\cong \syzygy[t-1+d(n)-r] L_n\] for all $n\in E$ and $t>0$. By \cref{projresol}, we have the following proposition.
\begin{proposition}\label{prop:projresol}
  The $(B,A)$-bimodule $P^{-t}$ is isomorphic to
  \[
    \begin{cases}
      M                                                                   & (t=0),   \\
      \bigoplus_{n\in E} Q^{\ast}_{n}\otimes_k P(\syzygy[t-1+d(n)-r] L_n) & (t > 0), \\
      0                                                                   & (t < 0)
    \end{cases}\] for each integer $t$.
\end{proposition}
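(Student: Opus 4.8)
The plan is to read off the statement as an essentially immediate consequence of \cref{projresol}, which already records the shape of every term of a minimal projective resolution of a bimodule inducing a stable equivalence of Morita type, combined with the identification of $V_n\otimes_B\syzygy[t-1]M$ established in the paragraph just above. I would begin by checking that the hypotheses of \cref{projresol} hold in our situation: $M$ is an indecomposable $B^{\op}\otimes_k A$-module inducing a stable equivalence of Morita type (so in particular it is projective on either side), and $P^{\bullet}$ is a \emph{minimal} projective resolution of it. Then \cref{projresol} applied with $\Lambda=B$ and $\Gamma=A$ gives at once $P^{0}\cong M$ and $P^{-t}=0$ for $t<0$.

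For $t>0$ I would apply the case $t>0$ of \cref{projresol}. A complete set of representatives of the isomorphism classes of simple $B$-modules is $\{V_n\mid n\in E\}$, and the projective cover of $V_n$ is the indecomposable projective $B$-module $Q_n=F(T_n)$, so that $P(V_n)^{\ast}=Q_n^{\ast}$; hence
\[
  P^{-t}\;\cong\;\bigoplus_{n\in E}Q_n^{\ast}\otimes_k P\bigl(V_n\otimes_B\syzygy[t-1]M\bigr).
\]
Next I would substitute the isomorphism $V_n\otimes_B\syzygy[t-1]M\cong\syzygy[t-1+d(n)-r]L_n$ of $A$-modules displayed immediately before the statement (itself obtained by combining \cref{simpleminded}, the commutativity of the diagram relating $F^{-1}$ with ${-}\otimes_B M$, \cite[Theorem 2.1 (ii)]{Linckelmann1996}, and \cref{prop:hellermove}). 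Since projective covers of isomorphic modules are isomorphic, $P(V_n\otimes_B\syzygy[t-1]M)\cong P(\syzygy[t-1+d(n)-r]L_n)$, and inserting this into the display yields precisely the asserted description of $P^{-t}$ for $t>0$. Here one should keep in mind that the exponent $t-1+d(n)-r$ may be non-positive, in which case $\syzygy[t-1+d(n)-r]L_n$ denotes the iterated cosyzygy in the sense fixed in \cref{s:notation}.

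I do not anticipate a genuine obstacle: this proposition is a bookkeeping corollary of results already in hand. The two points that warrant a moment's care are (i) that the identification of $V_n\otimes_B\syzygy[t-1]M$ holds in $\modcat A$ and not merely in $\stmodcat A$ — which is exactly why \cite[Theorem 2.1 (ii)]{Linckelmann1996} is invoked — so that applying $P({-})$ to both sides is legitimate, and (ii) that the index set $E$ of the direct sum produced by \cref{projresol} matches, term by term, the direct sum in the statement, which it does since $\{V_n\mid n\in E\}$ exhausts the simple $B$-modules without repetition.
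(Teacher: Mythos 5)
Your proposal matches the paper's argument exactly: the paper proves this proposition implicitly in the paragraph immediately preceding it, by combining \cref{simpleminded}, \cref{derivedandstable}, \cite[Theorem 2.1 (ii)]{Linckelmann1996}, and \cref{prop:hellermove} to establish $V_n\otimes_B\syzygy[t-1]M\cong\syzygy[t-1+d(n)-r]L_n$, and then feeding this into \cref{projresol}. You have correctly identified all the ingredients, including the subtle point that Linckelmann's theorem is what upgrades the stable isomorphism to an isomorphism in $\modcat A$ so that projective covers can be compared.
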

Let $C=(C^t, d_C^t)_{t\in \Z}$ be a subcomplex of $P^{\bullet}$ defined as follows:
\[C^{-t}=\begin{cases}
    M                                                & (t=0),   \\
    \bigoplus_{n\in E,~~d(n)\le {r-t}}
    Q^{\ast}_{n}\otimes_k P(\syzygy[t-1+d(n)-r] L_n) & (t > 0), \\
    0                                                & (t < 0),
  \end{cases}\] and the map $d_C^t$ is the restriction $d_M^t$ to $C^{-t}$ for $t\in\mathbb Z$. This complex is bounded, since $C^{-t}=0$ for $t< 0$ and $t\ge r$. We should confirm that $d^t_C$ is well-defined, which we prove after the following lemma.

\begin{lemma}\label{essentialproj}
  The composition factors of $P(\Omega^u\, L_{\ell})$ are in $\mathcal S_{r-d(\ell)+u}$ for all $0\le u <d(\ell)$.
\end{lemma}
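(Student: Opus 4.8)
The plan is to run an induction on $u$, tracking the smallest depth occurring among composition factors.

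For an $A$-module $X$ write $\delta(X)$ for the minimum of $d(m)$ over all $m\in E$ with $S_m$ a composition factor of $X$ (so $\delta(0)=+\infty$). Since $S_m\in\mathcal S_i$ exactly when $d(m)\ge r-i$, the assertion ``all composition factors of $X$ lie in $\mathcal S_i$'' is exactly $\delta(X)\ge r-i$; thus it is enough to prove $\delta\bigl(P(\Omega^u L_\ell)\bigr)\ge d(\ell)-u$ for $0\le u<d(\ell)$. I will use two facts, both read off \cref{indecprojstr}. \emph{Fact~1}: for every $m\in E$ every composition factor of $P_m$ has depth $\ge d(m)-1$, since the composition factors of $P_m$ are $S_m$ (depth $d(m)$), those of $U_m^+$ (namely $S_{p_m}$, of depth $d(m)-1$, together with further children of $p_m$, of depth $d(m)$) and those of $U_m^-$ (children of $m$, of depth $d(m)+1$, and further copies of $S_m$). \emph{Fact~2}: $\delta(P_{c(\ell,1)})=d(\ell)$, since $P_{c(\ell,1)}$ has top and socle $S_{c(\ell,1)}$ (depth $d(\ell)+1$), its $U_{c(\ell,1)}^+$ involves only $\ell=p_{c(\ell,1)}$ (depth $d(\ell)$) and the other children of $\ell$ (depth $d(\ell)+1$), and its $U_{c(\ell,1)}^-$ involves children of $c(\ell,1)$ (depth $d(\ell)+2$) and copies of $S_{c(\ell,1)}$. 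From Fact~1 we get, for any $A$-module $X$: $\delta(P(X))\ge\delta(X)-1$ (because $P(X)$ is a sum of copies of $P_m$ with $S_m$ a composition factor of $X/\rad X$, hence $d(m)\ge\delta(X)$), and $\delta(\Omega X)\ge\delta(P(X))$ (because $\Omega X$ is a submodule of $P(X)$).

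The base case $u=0$ is Fact~2: $L_\ell$ has top $S_{c(\ell,1)}$, so $P(L_\ell)=P_{c(\ell,1)}$ and $\delta(P(L_\ell))=d(\ell)$. For the inductive step, assume $\delta\bigl(P(\Omega^u L_\ell)\bigr)\ge d(\ell)-u$; then $\delta(\Omega^{u+1}L_\ell)=\delta\bigl(\Omega(\Omega^u L_\ell)\bigr)\ge\delta\bigl(P(\Omega^u L_\ell)\bigr)\ge d(\ell)-u$, so $\delta\bigl(P(\Omega^{u+1}L_\ell)\bigr)\ge\delta(\Omega^{u+1}L_\ell)-1\ge d(\ell)-(u+1)$, closing the induction. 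Reading this backwards, every composition factor $S_m$ of $P(\Omega^u L_\ell)$ satisfies $d(m)\ge d(\ell)-u$, i.e.\ $S_m\in\mathcal S_{r-d(\ell)+u}$, and $r-d(\ell)+u\in\{0,\dots,r-1\}$ since $0\le u<d(\ell)\le r$. (The claim is not vacuous: $L_\ell$ is never projective, its length being $\deg(\ell)$, so every $\Omega^u L_\ell$ is nonzero.)

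The one step that is not routine syzygy bookkeeping is Fact~2 — that the projective cover of $L_\ell$ climbs the tree only as far as the parent vertex of $c(\ell,1)$, namely $\ell$ at depth $d(\ell)$, and no higher. The case deserving care is the degenerate one $\deg(\ell)=1$, where $c(\ell,1)=\ell$, $L_\ell\cong S_\ell$ and $P(L_\ell)=P_\ell$; there one must inspect $P_\ell$ directly via \cref{indecprojstr} (taking account, when $v_\ell$ is a leaf whose parent is not the root, of the extra edge encircling that parent). This degenerate case is where I expect the main difficulty to lie; once it is disposed of, the induction above completes the proof.
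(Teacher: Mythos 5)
Your base case is not actually established, and this is not just a loose end you can postpone: it is exactly where the statement fails. You flag the degenerate case $\deg(\ell)=1$ as ``deserving care'' and then set it aside, but there $c(\ell,1)=\ell$, so $L_\ell\cong S_\ell$ and $P(L_\ell)=P_\ell$; by \cref{indecprojstr}, $P_\ell$ always has $S_{p_\ell}$ among its composition factors (inside $U_\ell^{+}$), and $d(p_\ell)=d(\ell)-1$, so $\delta(P(L_\ell))=d(\ell)-1$, not $d(\ell)$. Your Fact~2 is therefore false precisely at the base of the induction, and the whole chain $\delta(P(\Omega^{u}L_\ell))\ge d(\ell)-u$ is off by one from the start. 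Moreover this case is unavoidable whenever $r\ge 2$: any edge $\ell$ with $d(\ell)=r$ has $\deg(\ell)=1$ and $d(\ell)\ge 2$, and for such $\ell$ the asserted conclusion reads ``all composition factors of $P_\ell$ lie in $\mathcal S_0$,'' which fails. Concretely, in \cref{myexample} with $\ell=4$ and $u=0$ one has $P(L_4)=P_4$ with composition factor $S_3\notin\mathcal S_0=\mathcal S_{r-d(4)}$.

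So the proposal has a genuine gap, and more strikingly the gap cannot be closed, because the lemma as written appears to be wrong in this boundary case. It is worth knowing that the paper's own proof has the same flaw: its base step ``the composition factors of $P(L_\ell)=P_{c(\ell,1)}$ are in $\mathcal S_{r-d(\ell)}$'' tacitly uses $d(c(\ell,1))=d(\ell)+1$, i.e.\ $\deg(\ell)\ge 2$. Apart from this issue, your $\delta$-bookkeeping is a clean and more direct packaging of the paper's inductive step (the paper instead argues by contradiction that each indecomposable summand of $P(\Omega^{u}L_\ell)$ maps nontrivially into $P(\Omega^{u-1}L_\ell)$ and then invokes \cref{dimHom}); both versions rest on the same estimate that projective covers and syzygies move depth by at most one per step. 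To repair things one would need to weaken the bound by one, e.g.\ to $\delta(P(\Omega^{u}L_\ell))\ge d(\ell)-u-1$, and then tighten the downstream well-definedness argument for $d_C^{t}$ by reasoning about the tops of the indecomposable summands of $I(\Omega^{t+d(n)-r}L_n)$ rather than about all of its composition factors.
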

\begin{proof}
  Let $\pi_u:P(\Omega^u\, L_{\ell})\to \Omega^u\, L_{\ell}$ be a projective cover of $\Omega^u\,L_{\ell}$.
  We give a proof by the induction on $u$, which is a dual proof of a part of \cref{essential}. The lemma is valid for $u=0$, since the composition factors of $P(L_{\ell})=P_{c(\ell,1)}$ are in $\mathcal S_{r-d(\ell)}$. We show that the lemma is true for $u>0$ under the assumption that the proposition is true for $u-1$.
  For contradiction, assume that $P(\syzygy[u] L_{\ell})$ has an indecomposable summand $Q$
  such that the composition of $(\ker \pi_{u-1})\circ \pi_u : P(\Omega^u\,L_{\ell})\to P(\syzygy[u-1]L_{\ell}) $ with $\iota$ the injection from $Q$ is $0$. Then $\pi_u\circ \iota=0$ since $\ker \pi_{u-1}$ is a monomorphism. This contradicts our assumption that $\pi_u$ is a projective cover of $\syzygy[u] L_{\ell}$.
  Thus, each indecomposable summands of $P(\Omega^u\,L_{\ell})$ has a nontrivial image on $P(\syzygy[u-1]L_{\ell})$ by $(\ker \pi_{u-1})\circ \pi_u$.

  Since the composition factors of $P(\syzygy[u-1] L_{\ell})$ are in $\mathcal S_{r-d(\ell)+u-1}$ by the assumption of induction, each indecomposable summands of $P(\syzygy[u-1] L_{\ell})$ is isomorphic to $P_{y}$ such that $S_y\in \mathcal S_{r-d(\ell)+u-2}$. By \cref{dimHom}, each indecomposable summands of $P(\Omega^u\, L_{\ell})$ is isomorphic to $P_{y}$ such that $S_y\in \mathcal S_{r-d(\ell)+u-1}$. This indicates that the composition factors of $P(\Omega^u\, L_{\ell})$ are in $S_{r-d(\ell)+u}$.
\end{proof}

\begin{proposition}
  $d_C^t$ is well-defined for $t\in \mathbb Z$.
\end{proposition}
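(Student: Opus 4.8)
The plan is to establish the slightly stronger statement that $d_M^t\bigl(C^{-t}\bigr)\subseteq C^{-t+1}$ inside $P^{-t+1}$, which is exactly what is needed for the restriction $d_C^t$ of $d_M^t$ to the direct summand $C^{-t}\subseteq P^{-t}$ to take values in the direct summand $C^{-t+1}\subseteq P^{-t+1}$. The cases $t\le 1$ and $t\ge r$ are immediate: $C^{-t}=0$ for $t<0$ and for $t\ge r$, while for $t\in\{0,1\}$ one has $C^{-t+1}=P^{-t+1}$, so nothing is to be checked. Assume therefore $2\le t\le r-1$. Put $\bar P^{-t+1}:=P^{-t+1}/C^{-t+1}$; by \cref{prop:projresol} and the definition of $C$ this is isomorphic, as a $(B,A)$-bimodule, to $\bigoplus_{n'\in E,\ d(n')\ge r-t+2}Q^{\ast}_{n'}\otimes_k P\bigl(\syzygy[t-2+d(n')-r]L_{n'}\bigr)$. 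It suffices to show that the composite $C^{-t}\hookrightarrow P^{-t}\xrightarrow{d_M^t}P^{-t+1}\twoheadrightarrow\bar P^{-t+1}$ vanishes; since it is a homomorphism of $(B,A)$-bimodules, hence in particular of right $A$-modules, it is enough to prove that $\Hom_A\bigl(C^{-t},\bar P^{-t+1}\bigr)=0$.

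For the target, note that for each $n'$ with $d(n')\ge r-t+2$ the exponent $u':=t-2+d(n')-r$ satisfies $0\le u'<d(n')$, so \cref{essentialproj} applies and gives that every composition factor of $P(\syzygy[u']L_{n'})$ lies in $\mathcal S_{r-d(n')+u'}=\mathcal S_{t-2}$. Hence, viewed as a right $A$-module, $\bar P^{-t+1}$ has all its composition factors in $\mathcal S_{t-2}$. For the source, $C^{-t}$ restricted to $A$ is a direct sum of copies of the projective modules $P(\syzygy[u]L_n)$ over the edges $n$ with $d(n)\le r-t$, where $u:=t-1+d(n)-r\le-1$. Since $\Hom_A(P_y,W)=0$ whenever $S_y$ is not a composition factor of $W$, and the indecomposable summands of $P(\syzygy[u]L_n)$ are indexed by the composition factors of $\mathrm{top}(\syzygy[u]L_n)$, the claim $\Hom_A\bigl(C^{-t},\bar P^{-t+1}\bigr)=0$ reduces to showing that no composition factor of $\mathrm{top}(\syzygy[u]L_n)$ lies in $\mathcal S_{t-2}$.

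To obtain this I would write $u=-j$ with $j=r-d(n)-t+1$, so that $1\le j\le r-d(n)$ under the running hypotheses, and use that $\syzygy[-j]L_n$ is a quotient of the injective hull $I(\syzygy[-j+1]L_n)$; consequently the composition factors of $\mathrm{top}(\syzygy[-j]L_n)$ are among those of $I(\syzygy[-j+1]L_n)$. Now $I(\syzygy[-j+1]L_n)$ is precisely the term $Y_n^{-(r-d(n))+(j-1)}$ of the truncated minimal injective resolution $Y_n$ of $L_n$ constructed in the proof of \cref{simpleminded}, and \cref{essential} tells us that its composition factors avoid $\mathcal S_{(r-d(n))-j}$ when $j\ge2$ and avoid $\mathcal S_{(r-d(n))-2}$ when $j=1$. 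Since $t-2=(r-d(n))-j-1$ and $\mathcal S_\bullet$ is increasing, in either case these composition factors avoid $\mathcal S_{t-2}$. This gives $\Hom_A\bigl(C^{-t},\bar P^{-t+1}\bigr)=0$, hence $d_M^t\bigl(C^{-t}\bigr)\subseteq C^{-t+1}$, and $d_C^t$ is well-defined.

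The step I expect to be the main obstacle is the last one: \cref{essentialproj} is stated only for genuine syzygies $\syzygy[u]L_\ell$ with $0\le u<d(\ell)$, whereas the summands actually retained in $C$ involve cosyzygies $\syzygy[-j]L_n$, so some ``dual'' information about $\mathrm{top}(\syzygy[-j]L_n)$ is required. Passing through the injective hull and quoting the injective-resolution description already contained in \cref{essential}, rather than reproving a formal dual of \cref{essentialproj}, seems the cleanest route; everything else is bookkeeping with the identity $t-2=(r-d(n))-j-1$ together with the elementary fact that a bimodule homomorphism is in particular a one-sided one.
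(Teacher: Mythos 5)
Your proof is correct and follows essentially the same route as the paper: both reduce to the vanishing of $\Hom_A(C^{-t},\,P^{-t+1}/C^{-t+1})$, control the target's composition factors via \cref{essentialproj}, and control the source's composition factors via \cref{essential} applied to the injective hulls $I(\syzygy[u+1]L_n)$ appearing in the injective resolution $Y_n$. The only cosmetic difference is that the paper invokes the isomorphism $P(\syzygy[u]L_n)\cong I(\syzygy[u+1]L_n)$ directly, whereas you use the slightly weaker observation that $\syzygy[u]L_n$ is a quotient of $I(\syzygy[u+1]L_n)$, which suffices to control $\mathrm{top}(\syzygy[u]L_n)$.
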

\begin{proof}
  Put
  \[D^{-t}=\begin{cases}
      \bigoplus_{n\in E, ~~ d(n)> {r-t}} Q^{\ast}_{n}\otimes_k P(\syzygy[t-1+d(n)-r] L_n) & (t>0), \\ 0 & (t \le 0),
    \end{cases}\]
  for all $t\in\mathbb Z$. Since $C^{-t}\oplus D^{-t}\cong P^{-t}$, it is enough to show that $\Hom_{B^{\op}\otimes_k A}(C^{-t}, D^{-t+1})=0$ for all $r < t< 1$.
  Take $t,\ell \in \mathbb Z$ and $n\in E$ to satisfy that $r<t<1$, $t+d(n)-r\le 0$ and $t-1+d(\ell) -r >0$.
  By \cref{essential}, if $t+d(n)-r\neq 0$, the composition factors of $I(\syzygy[t+d(n)-r] L_n)$ are not in $\mathcal S_{t-1}$. If $t+d(n)-r= 0$, then the composition factors of $I(\syzygy[t+d(n)-r] L_n)=P_n$ are not in $\mathcal S_{r-d(n)-2}=\mathcal S_{t-2}$.
  As a result, the composition factors of $I(\syzygy[t+d(n)-r] L_n)$ are not in $\mathcal S_{t-2}$. By \cref{essentialproj}, the composition factors of $P(\syzygy[t-2+d(\ell)-r] L_{\ell})$ are in $\mathcal S_{t-2}$.
  As a result, we have $\Hom_A(I(\syzygy[t+d(n)-r]L_n), P(\syzygy[t-2+d(\ell)-r]L_{\ell}))=0$. Since $P(\syzygy[t-1+d(n)-r]L_n) \cong I(\syzygy[t+d(n)-r] L_n)$, it follows
  \begin{gather*}
    C^{-t}\cong \bigoplus_{n\in \{1,\dots,N\},~~d(n)\le r-t} I(\syzygy[t+d(n)-r] L_n)^{\oplus \dim(Q_n)},\\
    D^{-t+1} \cong \bigoplus_{\ell\in \{1,\dots,N\},~~d(\ell)> r-t+1} P(\syzygy[t-2+d(\ell)-r] L_{\ell})^{\oplus \dim(Q_{\ell})}
  \end{gather*} as $A$-modules. Therefore, we have \[\Hom_{B^{\op}\otimes_k A}(C^{-t}, D^{-t+1})\subseteq \Hom_{A}(C^{-t}, D^{-t+1})=0.\]
\end{proof}

\begin{proposition}\label{where simple moves on C}
  For all $n\in E$,
  \[V_n\otimes_B C\cong L_n[r-d(n)]\] in $\derb(A)$.
\end{proposition}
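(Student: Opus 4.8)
The plan is to compute $V_n \otimes_B C$ directly by tensoring the complex $C$ with the simple $B$-module $V_n$, and to compare the result with the complex $Y_n$ of \cref{simpleminded} (which we already know is quasi-isomorphic to $L_n[r-d(n)]$, noting $i = r - d(n)$). The key computational input is \cref{prop:hellermove} (Heller translates commute with $V_n \otimes_B {-}$) together with the formula $V_n \otimes_B M \cong \syzygy[d(n)-r] L_n$ in $\modcat A$ established just before \cref{prop:projresol}, and the fact $V_n \otimes_B Q_m^{\ast} \cong \delta_{nm}k$ coming from $Q_m^{\ast} \otimes_k P(-)$ being a projective $B^{\op}$-summand structure (so that $V_n \otimes_B (Q_m^{\ast}\otimes_k W) \cong \delta_{nm} W$ for an $A$-module $W$). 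Applying these term by term to the definition of $C$, I expect to obtain
\[
(V_n \otimes_B C)^{-t} \cong
\begin{cases}
\syzygy[d(n)-r] L_n & (t = 0),\\
P(\syzygy[t-1+d(n)-r]L_n) & (0 < t,\ d(n)\le r-t),\\
0 & \text{otherwise},
\end{cases}
\]
so that $V_n \otimes_B C$ has nonzero terms precisely in degrees $-(r-d(n)), \dots, 0$, with the $0$th term $\syzygy[d(n)-r]L_n$ and lower terms the projective covers $P(\syzygy[u-1]L_n)$ for $u = t + d(n) - r$ running from $1$ down to... — in other words, exactly the truncated minimal projective resolution of $\syzygy[d(n)-r]L_n$. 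Here I use that tensoring a minimal projective resolution of $M$ over $B$ with the simple $V_n$ and then deleting the summands indexed by $d(n) > r-t$ corresponds, under the above identifications, to deleting exactly those projective summands that \cref{essentialproj} shows have composition factors already accounted for — this is the same truncation that appears in \cref{essential}, read in the projective (rather than injective) direction.

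Next I would identify the differential. The differential $d_C^t$ is by definition the restriction of the minimal resolution differential $d_M^t$, so after applying $V_n\otimes_B{-}$ it becomes the differential of the (truncated) minimal projective resolution of $\syzygy[d(n)-r]L_n$, composed/factored through the projective covers exactly as in the construction of $Y_n$ in \cref{s:perverse} but dualized. Concretely, I claim $V_n \otimes_B C$ is the complex whose terms are $P(\syzygy[u-1]L_n)$ for the appropriate range of $u$ together with $\syzygy[d(n)-r]L_n$ in degree $0$, and whose differentials are the compositions $P(\syzygy[u-1]L_n)\to \syzygy[u-1]L_n \hookrightarrow P(\syzygy[u-2]L_n)$ — i.e., a truncated minimal projective resolution of $L_n$ shifted by $r-d(n)$. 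Since a truncated minimal projective resolution of $L_n$ is quasi-isomorphic to $L_n$ (the truncation is by a subcomplex with zero homology, as in the last paragraph of the proof of \cref{simpleminded}, or equally well the quotient map $C' \to L_n$ from the full resolution is a quasi-isomorphism and $C'$ itself is quasi-isomorphic to its truncation because the omitted tail is exact), we conclude $V_n\otimes_B C \cong L_n[r-d(n)]$ in $\derb(A)$.

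The main obstacle I anticipate is verifying that the truncation defining $C$ is compatible with tensoring by $V_n$ in the precise way claimed — that is, that $V_n \otimes_B D^{-t} = 0$ for the complementary summand $D$ is \emph{not} what makes the argument work (indeed $V_n \otimes_B D^{-t}$ need not vanish); rather, one must check that $V_n \otimes_B C$ really is a subcomplex of $V_n\otimes_B P^\bullet$ whose quotient has vanishing homology in the relevant degrees, so that the inclusion $V_n\otimes_B C \hookrightarrow V_n\otimes_B P^\bullet$ is a quasi-isomorphism; combined with $V_n\otimes_B P^\bullet \cong V_n\otimes_B^{\mathbb L} M[\text{shift}]$ and the computation of $V_n\otimes_B M$ in the stable category, this pins down the homology. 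Alternatively, and perhaps more cleanly, I would bypass $P^\bullet$ and argue directly that the explicitly-described complex $V_n\otimes_B C$ above has homology concentrated in degree $-(r-d(n))$ equal to $L_n$, by using \cref{essentialproj} to see that the projective terms in positive degree form the start of a minimal projective resolution of $L_n$ and that the degree-$0$ term $\syzygy[d(n)-r]L_n$ glues on correctly. The bookkeeping with the multiple shifts ($d(n)-r$ inside the Heller operators, $r-d(n)$ for the complex shift) and the index ranges is where care is needed, but no new idea beyond \cref{essential,essentialproj,prop:hellermove,simpleminded} should be required.
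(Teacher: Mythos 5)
Your proposal is correct and follows essentially the same route as the paper: compute the terms of $V_n\otimes_B C$ from $V_n\otimes_B Q_\ell^\ast\cong\delta_{n\ell}k$ and $V_n\otimes_B M\cong\syzygy[d(n)-r]L_n$, recognize the result as a shift of a truncated coresolution of $L_n$ ending in $\syzygy[d(n)-r]L_n$, and produce the quasi-isomorphism $L_n[r-d(n)]\to V_n\otimes_B C$ via the injective hull $\iota_0$, noting that the mapping cone is exact. One small caution on the alternative you flag but set aside: the inclusion $V_n\otimes_B C\hookrightarrow V_n\otimes_B P^\bullet$ is not a quasi-isomorphism, since $P^\bullet$ is $B^{\op}$-split and so $V_n\otimes_B P^\bullet$ is exact, whereas $V_n\otimes_B C$ has homology $L_n$ in degree $d(n)-r$; the direct description you give afterward is the correct argument and is what the paper does.
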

\begin{proof}
  We remark that \[V_n \otimes_B Q_{{\ell}}^{\ast}\cong \Hom_B(Q_{\ell}, V_n)=\delta_{\ell n}k\] for all $\ell,n\in\{1,\dots, N\}$ since $Q_{\ell}$ is an indecomposable projective module. Consequently, the $-t$th term of the complex $V_n\otimes_B C$ is
  \[\begin{cases}
      \syzygy[d(n)-r]L_n         & (t=0),         \\
      P(\syzygy[t-1+d(n)-r] L_n) & (d(n)\le r-t), \\ 0 & (d(n)> r-t).
    \end{cases}\]
  The complex $V_n\otimes_B C$ is a truncated projective resolution of $\syzygy[d(n)-r] L_n$ having $P(\syzygy[-1] L_n)\cong I(L_n)$ in $(d(n)-r)$th term and having $0$ in $\ell$th term for $\ell < d(n)-r$ and $0 < \ell$.
  Let $f$ be a homomorphism of complexes from $L_n[r-d(n)]$ to $V_n\otimes_B C$ satisfying $f^{d(n)-r}=\iota_0$
  and $f^{\ell}=0$ for all $\ell \in \mathbb Z$ except for $d(n)-r$. Since the mapping cone of $f$ is an exact sequence, the homomorphism $f$ is a quasi-isomorphism. Therefore, we have $V_n \otimes_B C \cong L_n[r-d(n)]$ in $\derb{(A)}$.
\end{proof}

\begin{theorem}\label{Cistwo-sided}
  The complex $C$ is a two-sided tilting complex of $B^{\op}\otimes_k A$-modules.
\end{theorem}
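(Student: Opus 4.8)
The plan is to invoke \cref{prop:two-sided}, which is exactly tailored to this situation: it says that a bounded complex $C$ of $B^{\op}\otimes_k A$-modules with $C^t=0$ for $t>0$, $C^0=M$, and $C^t$ projective for $t<0$, is a two-sided tilting complex provided that
\[\Hom_{\derb(A)}(V_n\otimes_B C,\, V_\ell\otimes_B C[-n]) \cong \delta_{V_nV_\ell}\delta_{n0}k\]
for all simple $B$-modules $V_n, V_\ell$ and all $n\in\mathbb Z_{\ge 0}$ (with the evident relabelling of the dummy integer index so as not to clash with the edge index). First I would check that $C$ as constructed above satisfies the structural hypotheses of \cref{prop:two-sided}: by definition $C^{-t}=0$ for $t<0$, $C^0=M$, and each $C^{-t}$ for $t>0$ is a direct sum of modules of the form $Q_n^{\ast}\otimes_k P(\syzygy[t-1+d(n)-r]L_n)$, which is projective as a $B^{\op}\otimes_k A$-module since $Q_n^\ast$ is projective over $B^{\op}$ and $P(-)$ is projective over $A$; boundedness was already observed in the text ($C^{-t}=0$ for $t\ge r$). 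So the only real content is the cohomological orthogonality condition.

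The key computation is \cref{where simple moves on C}, which gives $V_n\otimes_B C\cong L_n[r-d(n)]$ in $\derb(A)$. Hence the required $\Hom$-group becomes
\[\Hom_{\derb(A)}\bigl(L_n[r-d(n)],\, L_\ell[r-d(\ell)][-j]\bigr)\cong \Hom_{\derb(A)}\bigl(L_n,\, L_\ell[\,j + d(n)-d(\ell)\,]\bigr)\cong \mathrm{Ext}^{\,j+d(n)-d(\ell)}_A(L_n,L_\ell),\]
for $j\ge 0$. So the theorem reduces to showing that this Ext-group is $k$ when $n=\ell$ and $j=0$, and vanishes otherwise. One direction is essentially free: $F^{-1}$ is an equivalence $\derb(B)\to\derb(A)$ sending $V_n$ (up to shift) to $L_n$, more precisely \cref{simpleminded} gives $F^{-1}(V_n)\cong L_n[i]$ with $i=r-d(n)$, and since the $V_n$ are the simple $B$-modules we have $\Hom_{\derb(B)}(V_n,V_\ell[m])\cong\delta_{n\ell}\delta_{m0}k$; transporting this isomorphism through the equivalence $F^{-1}$ yields precisely $\Hom_{\derb(A)}(L_n[r-d(n)],L_\ell[r-d(\ell)][-m])\cong\delta_{n\ell}\delta_{m0}k$ for all $m\in\mathbb Z$. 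Comparing with the display above (taking $m=j$), this is exactly the orthogonality condition of \cref{prop:two-sided}, now even verified for all integers $j$ rather than just $j\ge 0$.

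Therefore the proof is short: verify the structural hypotheses, cite \cref{where simple moves on C} to identify $V_n\otimes_B C$, cite \cref{simpleminded} together with the fact that $F^{-1}$ is a derived equivalence to get the $\Hom$-orthogonality of the $L_n$'s with their shifts, and conclude by \cref{prop:two-sided}. The main point requiring care — and the step I would write out most carefully — is the bookkeeping that identifies $V_n\otimes_B C$ with the image $F^{-1}(V_n)$ up to the correct shift: \cref{simpleminded} gives $F^{-1}(V_n)\cong L_n[r-d(n)]$ and \cref{where simple moves on C} gives $V_n\otimes_B C\cong L_n[r-d(n)]$ independently, so the two agree, but one should note that this is consistent with $C$ restricting to (a complex quasi-isomorphic to) the tilting complex $T$ and $M$ inducing the stable part of $F^{-1}$; no delicate argument is needed beyond matching shifts. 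I do not anticipate a serious obstacle, since all the hard work — the perverse-equivalence analysis, the computation of $F^{-1}(V_n)$, the well-definedness of $d_C^t$, and the identification of $V_n\otimes_B C$ — has already been done in the preceding propositions.
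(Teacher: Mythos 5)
Your proposal matches the paper's proof: you verify the structural hypotheses of \cref{prop:two-sided}, identify $V_n\otimes_B C\cong L_n[r-d(n)]\cong F^{-1}(V_n)$ via \cref{where simple moves on C} and \cref{simpleminded}, and transport the Hom-orthogonality of the simple $B$-modules through the derived equivalence $F^{-1}$. The intermediate rewriting in terms of $\mathrm{Ext}$-groups is an inessential detour; the argument and its ingredients are the same as the paper's.
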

\begin{proof}
  We note that $C^t$ is projective for all $t < 0$ and $C^0=M$.
  By \cref{prop:two-sided}, it is enough to show \[\Hom_{D^b(A)}(V_n\otimes_B C, V_{\ell}\otimes_B C[-u])\cong \delta_{n\ell}\delta_{u 0}k\]
  for $n,\ell \in E$ and $u\in \mathbb Z_{\ge 0}$. By \cref{where simple moves on C,simpleminded}, we have
  \begin{gather*}
    V_n\otimes_B C\cong L_n[r-d(n)]\cong F^{-1}(V_n),\\
    V_{\ell}\otimes_B C[-u]\cong L_n[r-d(\ell)][-u]\cong F^{-1}(V_{\ell})[-u].
  \end{gather*}
  Applying these to \cref{treetostarcomplexisdecreasingperverse}, we have
  \begin{align*}
    \Hom_{D^b(A)}(V_n\otimes_B C, V_{\ell}\otimes_B C[-u]) & \cong \Hom_{D^b(A)}(F^{-1}(V_n), F^{-1}(V_{\ell})[-u]) \\
                                                           & \cong \Hom_{D^b(B)}(V_n, V_{\ell}[-u])                 \\
                                                           & \cong \delta_{n\ell}\delta_{u0}k.
  \end{align*}
\end{proof}

Since the restriction of $C$ is projective on either side, by \cref{Cistwo-sided,omittingL} we conclude that $G:={-}\otimes_B C$ is a triangulated functor from $D^b(B)$ to $D^b(A)$.
\begin{proposition}\label{perverseG}
  The equivalence $G$ is perverse relative to $(\mathcal{S}_{\bullet}', \mathcal{S}_{\bullet}, -q)$, where $q:\{0,\dots,r-1\}\to \mathbb Z$ is a map given by $q(i)=-i$.
\end{proposition}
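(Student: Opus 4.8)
The plan is to deduce the perversity of $G$ from that of $F^{-1}$, exploiting that $G$ and $F^{-1}$ carry every simple $B$-module to isomorphic objects of $\derb(A)$ — a fact already extracted inside the proof of \cref{Cistwo-sided}.

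First I would record that $G$ is genuinely an equivalence: $C$ is a two-sided tilting complex by \cref{Cistwo-sided}, and since $C$ is projective on either side, \cref{omittingL} identifies $G={-}\otimes_B C$ with ${-}\otimes_B^{\mathbb L}C$, an equivalence $\derb(B)\to\derb(A)$. Next I would unwind the perversity data. By \cref{treetostarcomplexisdecreasingperverse} (through \cref{decreasing}), $F=\Hom_A^{\bullet}(T,{-})\colon\derb(A)\to\derb(B)$ is perverse relative to $(\mathcal{S}_{\bullet},\mathcal{S}'_{\bullet},p)$ with $p(i)=-i$. By \cref{usefulperverse}, $F^{-1}\colon\derb(B)\to\derb(A)$ is then perverse relative to $(\mathcal{S}'_{\bullet},\mathcal{S}_{\bullet},-p)$. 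Since $q(i)=-i$, the maps $-p$ and $-q$ coincide, so $F^{-1}$ is already perverse relative to $(\mathcal{S}'_{\bullet},\mathcal{S}_{\bullet},-q)$ — exactly the datum asserted for $G$. Also, by construction of the filtration $\mathcal{S}'_{\bullet}$ (the third bullet of perversity of $F$ gives the bijection $S_n\mapsto V_n$), one has, for $n\in E$, that $V_n\in\mathcal{S}'_i-\mathcal{S}'_{i-1}$ precisely when $S_n\in\mathcal{S}_i-\mathcal{S}_{i-1}$, that is, when $i=r-d(n)$.

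Then I would compare $G$ and $F^{-1}$ on simples. For $n\in E$ put $i=r-d(n)$. By \cref{where simple moves on C}, $G(V_n)=V_n\otimes_B C\cong L_n[r-d(n)]=L_n[i]$ in $\derb(A)$, while by \cref{simpleminded}, $F^{-1}(V_n)\cong L_n[i]$. Hence $G(V_n)\cong F^{-1}(V_n)$ in $\derb(A)$ for every $n$, and in particular $H^{\ell}(G(V_n))\cong H^{\ell}(F^{-1}(V_n))$ for all $\ell$. The conditions of \cref{def:perverse} expressing perversity of an equivalence relative to $(\mathcal{S}'_{\bullet},\mathcal{S}_{\bullet},-q)$ are, for each $i$ and each simple $B$-module $V\in\mathcal{S}'_i-\mathcal{S}'_{i-1}$, conditions on the cohomology modules of the image of $V$ together with the induced map $V\mapsto L_2/L_1$; they depend only on the isomorphism class of that image in $\derb(A)$. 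Since $F^{-1}$ satisfies them and $G(V)\cong F^{-1}(V)$ for all simple $V$, transporting the filtrations $L_1\subseteq L_2$ along these isomorphisms shows that $G$ satisfies them as well, with the same bijection $\mathcal{S}'_i-\mathcal{S}'_{i-1}\to\mathcal{S}_i-\mathcal{S}_{i-1}$. This gives that $G$ is perverse relative to $(\mathcal{S}'_{\bullet},\mathcal{S}_{\bullet},-q)$.

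The one point that needs to be pinned down is this last step: that perversity of an equivalence is detected on the images of the simple modules and is insensitive to replacing those images by isomorphic complexes. If one prefers not to invoke that, the alternative is a direct verification of \cref{def:perverse} from the explicit structure of $L_n$ recalled before \cref{simpleminded}: $G(V_n)\cong L_n[i]$ is concentrated in cohomological degree $-i$, its unique nonzero cohomology $L_n$ is uniserial with socle $S_n\in\mathcal{S}_i-\mathcal{S}_{i-1}$, and $L_n/\soc(L_n)$ has all composition factors among the $S_{c(n,j)}$, which lie in $\mathcal{S}_{i-1}$ since $d(c(n,j))=d(n)+1$; then $L_1=0$ and $L_2=\soc(L_n)$ verify the first two bullets, and $V_n\mapsto S_n$ is the required bijection by the description of $\mathcal{S}'_{\bullet}$.
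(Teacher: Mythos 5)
Your proposal is correct and follows essentially the same route as the paper's own proof: both deduce perversity of $F^{-1}$ relative to $(\mathcal{S}'_{\bullet},\mathcal{S}_{\bullet},-q)$ from \cref{treetostarcomplexisdecreasingperverse} and \crefenum{usefulperverse}{usefulperverse1}, then transfer it to $G$ via the isomorphisms $G(V_n)\cong F^{-1}(V_n)$ supplied by \cref{simpleminded} and \cref{where simple moves on C}, observing that \cref{def:perverse} constrains only the cohomology of images of simples and hence is invariant under isomorphism in $\derb(A)$. Your extra fallback verification via the explicit structure of $L_n$ is sound but not needed.
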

\begin{proof}
  Since $F$ is perverse relative to $(\mathcal{S}_{\bullet},\mathcal{S}_{\bullet}', q)$ by \cref{decreasing,treetostarcomplexisdecreasingperverse}, $F^{-1}$ is perverse relative to $(\mathcal{S}_{\bullet}',\mathcal{S}_{\bullet}, -q)$ by \crefenum{usefulperverse}{usefulperverse1}. As we have $G(V) \cong F^{-1}(V)$ for all $V\in \mathcal S'$ by \cref{simpleminded,where simple moves on C}, the definition of perverse equivalence stated as in \cref{def:perverse} implies that the functor $G$ is perverse relative to the same filtrations and map as the functor $F^{-1}$ is.
\end{proof}
\begin{theorem}\label{thm:resisC}
  The restriction of the complex $C$ of $(B,A)$-bimodules to $A$-modules is isomorphic to the tilting complex $T$ in $D^b(A)$.
\end{theorem}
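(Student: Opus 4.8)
The plan is to sidestep a direct term‑by‑term comparison of $C|_A$ with $T$ — which would be awkward since $C$ is not minimal as a complex of $A$‑modules — and instead to identify the functor $G={-}\otimes_B C$ with $F^{-1}$ on the regular module $B$. I would first record two elementary reductions. Since $G={-}\otimes_B C$ (as observed after \cref{Cistwo-sided}, using \cref{omittingL}), the left‑unit isomorphism $B\otimes_B C\cong C$ of complexes of right $A$‑modules identifies $G(B)$ with $C|_A$ in $\derb(A)$, so the theorem amounts to $G(B)\cong T$ in $\derb(A)$. On the other hand, since $T=\bigoplus_{n\in E}T_n$ and $Q_n=F(T_n)$, applying $F$ gives $F(T)\cong\bigoplus_{n\in E}Q_n\cong B$ as right $B$‑modules (the $Q_n$ forming a complete irredundant list of the indecomposable projective $B$‑modules, as $B$ is basic), hence $F^{-1}(B)\cong T$ in $\derb(A)$. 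Thus it suffices to prove $G(B)\cong F^{-1}(B)$, equivalently $F\bigl(G(B)\bigr)\cong B$ in $\derb(B)$.

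Next I would examine the self‑equivalence $H:=F\circ G\colon\derb(B)\to\derb(B)$ through the perversity calculus. By \cref{treetostarcomplexisdecreasingperverse} together with \cref{decreasing}, $F$ is perverse relative to $(\mathcal{S}_{\bullet},\mathcal{S}'_{\bullet},q)$ with $q(i)=-i$, and by \cref{perverseG} the equivalence $G$ is perverse relative to $(\mathcal{S}'_{\bullet},\mathcal{S}_{\bullet},-q)$. Composing via \crefenum{usefulperverse}{usefulperverse2}, $H$ is perverse relative to $(\mathcal{S}'_{\bullet},\mathcal{S}'_{\bullet},q+(-q))$, that is, relative to $(\mathcal{S}'_{\bullet},\mathcal{S}'_{\bullet},0)$. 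Since the perversity map is identically zero, \crefenum{usefulperverse}{usefulperverse3} shows that $H$ restricts to a Morita self‑equivalence of $\modcat B$.

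Finally I would conclude. A self‑equivalence of $\modcat B$ permutes the finitely many isomorphism classes of indecomposable projective $B$‑modules; since $B$ is basic we have $B\cong\bigoplus_{n\in E}Q_n$ with $\{Q_n\}_{n\in E}$ exactly that set of classes, so $H(B)\cong\bigoplus_{n\in E}H(Q_n)\cong\bigoplus_{n\in E}Q_n\cong B$ as right $B$‑modules, and the same holds in $\derb(B)$ because $B$ lies in $\modcat B$ and $H$ restricts there. Hence $F\bigl(G(B)\bigr)=H(B)\cong B$, so $G(B)\cong F^{-1}(B)\cong T$ in $\derb(A)$; as $G(B)=C|_A$, this is the assertion.

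The single substantive step is the passage from ``$H$ restricts to a Morita equivalence'' to ``$H(B)\cong B$'', which rests on $B$ being basic — true here because generalized Brauer tree algebras are bound quiver algebras over the algebraically closed field $k$. Everything else is formal manipulation with \cref{usefulperverse}; the point of routing through $F\circ G$ is precisely that this lets the perversity machinery do the work that a hands‑on reduction of $C|_A$ to a minimal complex would otherwise require.
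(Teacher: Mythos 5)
Your proposal is correct and follows essentially the same route as the paper's own proof: both pass through the observation that $F\circ G$ is perverse relative to $(\mathcal{S}'_{\bullet},\mathcal{S}'_{\bullet},0)$, hence restricts to a Morita self-equivalence of $\modcat B$, so that $F(B\otimes_B C)\cong F\circ G(B)\cong B\cong F(T)$ and one applies $F^{-1}$. The only addition in your write-up is the explicit justification that a Morita self-equivalence of the basic algebra $B$ sends the regular module to itself (by permuting the indecomposable projectives), a step the paper treats as immediate.
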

\begin{proof}
  Since $F\circ G$ is perverse relative to $(\mathcal{S}_{\bullet}', \mathcal{S}_{\bullet}', 0)$ by Propositions \ref{perverseG} and \crefenum{usefulperverse}{usefulperverse2}, the functor $F\circ G$ restricts to a Morita equivalence from $B$ to $B$ by \crefenum{usefulperverse}{usefulperverse3}. Therefore, we have \[F(B\otimes_B C)\cong F\circ G (B) \cong B\cong F(T).\]
  By applying $F^{-1}$, we have $B\otimes_B C \cong T$. This concludes that the restriction of $C$ to $A$ is isomorphic to $T$.
\end{proof}
\begin{remark}
  The functor $J:=\Hom_A(C,{-})$ is equivalent to ${-}\otimes_A C^{\ast}:\derb(A)\to\derb(B)$ since $A$ is a symmetric algebra. As the adjoint functor of the equivalent functor $G={-}\otimes_B C$ is the inverse functor, we have
  \[J\cong G^{-1}\cong F\] as triangulated functors.
\end{remark}

\section{Example}\label{s:examples}
We use the hypotheses and notation made in \cref{s:main}. We give an example of the theorems in \cref{s:main}. Let $(\mathcal T,v_0)$ be a generalized Brauer tree algebra with a root $v_0$ stated in \cref{myexample}. Here, we denote the structure of indecomposable projective modules $P_n$ \[P_n\cong \begin{bmatrix}
    S_n                  \\
    U_n^{+}\quad U_n^{-} \\
    S_n
  \end{bmatrix}\] to describe $P_n/\rad{P_n}\cong S_n$, $\soc P_n \cong S_n$ and $\rad{P_n}/\soc{P_n}\cong U_n^+\oplus U_n^{-}$. Following to \cref{indecprojstr}, it holds that
\begin{align*}
  P_1=\begin{bmatrix}
    1                          \\
    \begin{matrix}
      6
    \end{matrix}
    \quad
    \begin{matrix}
      2 \\3\\5
    \end{matrix} \\
    1
  \end{bmatrix}, \quad
  P_2=\begin{bmatrix}
    2 \\
    3 \\5\\1 \\
    2
  \end{bmatrix}, \quad
  P_3=\begin{bmatrix}
    3                          \\
    \begin{matrix}
      5 \\1\\2
    \end{matrix}
    \quad
    \begin{matrix}
      4 \\3\\4
    \end{matrix} \\
    3
  \end{bmatrix},\quad
  P_4=\begin{bmatrix}
    4 \\
    3 \\
    4 \\
    3 \\
    4
  \end{bmatrix},\quad
  P_5=\begin{bmatrix}
    5                          \\
    \begin{matrix}
      1 \\
      2 \\
      3
    \end{matrix}
    \quad
    \begin{matrix}
      5 \\5
    \end{matrix} \\
    5
  \end{bmatrix},\quad
  P_6=\begin{bmatrix}
    6                          \\
    \begin{matrix}
      1
    \end{matrix}
    \quad
    \begin{matrix}
      6
    \end{matrix} \\
    6
  \end{bmatrix}.
\end{align*}
We have a tree-to-star complex $T=\bigoplus_{n=1}^6 T_n$ of $A$-modules made in \cite{MEMBRILLOHERNANDEZ1997231} satisfying as follows:
\[\begin{tikzcd}[column sep=huge]
    \phantom{0}&-2\text{nd}&-1\text{st}& 0\text{th}\\[-10pt]
    T_1\rar[":",phantom]&[-35pt]P_1\rar[]& 0\rar[]& 0\\
    T_2\rar[":",phantom]&P_1\rar["\pathtohom{(\gamma_1^2\gamma_1^3\gamma_1^4)}"]&P_2\rar[]& 0\\
    T_3\rar[":",phantom]&P_1\rar["\pathtohom{(\gamma_1^3\gamma_1^4)}"]&P_3\rar[]&0\\
    T_4\rar[":",phantom]&P_1\rar["\pathtohom{(\gamma_1^3\gamma_1^4)}"]&P_3\rar["\pathtohom{\gamma_3^2}"]&P_4\\
    T_5\rar[":",phantom]&P_1\rar["\pathtohom{\gamma_1^4}"]&P_5\rar[]& 0\\
    T_6\rar[":",phantom]&P_6\rar[]&0\rar[]& 0
  \end{tikzcd}\]
According to \cref{rem:endomorphism}, the algebra $\End_{\homob{(\projcat{A})}}(T)$ derived equivalent to $A$ is a generalized Brauer tree algebra associated to a generalized Brauer star as follows:
\[\begin{tikzpicture}
    \tikzset{block/.style={circle,draw,thick,minimum width=0, line width=1,minimum height=0,inner sep=1.0}};
    \node (P0) [label={[label distance=.1]0:\fbox{1}},block]at (0:0) {$w_0$};
    \foreach \x / \y[count=\i] in {1/1, 2/1, 3/2, 4/1, 5/3, 6/2}
      {
        \node (P\i) [label={[label distance=.1]{90+60*\x}:\fbox{\y}},block] at (90+60*\x:2) {$w_\i$};
      }
    \path[]
    (P0) edge node[above] {$1$} (P1)
    (P0) edge node[above] {$2$} (P2)
    (P0) edge node[left] {$3$} (P3)
    (P0) edge node[below] {$4$} (P4)
    (P0) edge node[above] {$5$} (P5)
    (P0) edge node[right] {$6$} (P6);
  \end{tikzpicture}\]
We note for $(\mathcal T,v_0)$, \[d(1)=d(6)=1,\quad d(2)=d(3)=d(5)=2,\quad d(4)=3.\]
By \cref{treetostarcomplexisdecreasingperverse,perverseandtreetostar}, an equivalence $F=\Hom_A^{\bullet}(T,{-}):\derb(A)\to $\\$ \derb(\End_{\homob(\projcat A)}(T))$
  induces the decreasing perverse with respect to the filtration of a complete set of representatives of isomorphism classes of simple $A$-modules \[ \emptyset \subseteq \{S_4\}\subseteq \{S_2,S_3, S_4, S_5\} \subseteq \{S_1,S_2,S_3, S_4, S_5,S_6\}.\]
  We have uniserial $A$-modules as follows:
  \begin{align*}
    L_1\cong
    \myfbox{$\begin{matrix}
          2 \\3\\5\\1
        \end{matrix}$},\quad
    L_2\cong S_2,\quad
    L_3\cong \myfbox{$\begin{matrix}
          4 \\3
        \end{matrix}$},\quad
    L_4\cong S_4,\quad
    L_5\cong S_5,\quad
    L_6\cong S_6.
  \end{align*}
  \cref{simpleminded} asserts that
  \begin{align*}
    F^{-1}(V_1)\cong
    \myfbox{$\begin{matrix}
          2 \\3\\5\\1
        \end{matrix}$}[2],\quad
    F^{-1}(V_2)\cong S_2[1],\quad
    F^{-1}(V_3)\cong\myfbox{$\begin{matrix}
          4 \\3
        \end{matrix}$}[1], \\
    F^{-1}(V_4)\cong S_4,\quad
    F^{-1}(V_5)\cong S_5[1],\quad
    F^{-1}(V_6)\cong S_6[2]
  \end{align*}
  in the bounded derived category of $A$-modules.
  Let $M$ be an $\End(T)^{\op}\otimes A$-module giving a stable equivalence of Morita type induced by $T$.
  By \cref{prop:projresol}, a minimal projective resolution of $M$ is described as follows:
  \[
    \begin{tikzcd}[row sep=tiny]
      \cdots &-2\text{nd}&-1\text{st}& 0\text{th}\\
      \cdots & {Q^{\ast}_{1}\otimes_k P(\syzygy[-1] L_1)}& Q^{\ast}_{1}\otimes_k P(\syzygy[-2] L_1)& \phantom{0}\\
      &\oplus & \oplus &\\
      \cdots& Q^{\ast}_{2}\otimes_k P( L_2)& Q^{\ast}_{2}\otimes_k P(\syzygy[-1] L_2)& \phantom{0}\\
      &\oplus & \oplus &\\
      \cdots& Q^{\ast}_{3}\otimes_k P( L_3)& Q^{\ast}_{3}\otimes_k P(\syzygy[-1] L_3)\\
      \rar[shorten >=40pt]&\oplus\rar[shorten=35pt] & \oplus\rar[shorten <= 40pt,] &M\\
      \cdots& Q^{\ast}_{4}\otimes_k P(\syzygy[] L_4)& Q^{\ast}_{4}\otimes_k P( L_4)& \phantom{0}\\
      &\oplus & \oplus &\\
      \cdots& Q^{\ast}_{5}\otimes_k P( L_5)& Q^{\ast}_{5}\otimes_k P(\syzygy[-1] L_5)& \phantom{0}\\
      &\oplus & \oplus &\\
      \cdots& Q^{\ast}_{6}\otimes_k P(\syzygy[-1] L_6)& Q^{\ast}_{6}\otimes_k P(\syzygy[-2] L_6)& \phantom{0}\\
    \end{tikzcd}
  \]
  Our two-sided tilting complex $C$ is given as follows:
\[
  \begin{tikzcd}[row sep=tiny]
    \phantom{\cdots} &-2\text{nd}&-1\text{st}& 0\text{th}\\
    \phantom{\cdots} & {Q^{\ast}_{1}\otimes_k P(\syzygy[-1] L_1)}& Q^{\ast}_{1}\otimes_k P(\syzygy[-2] L_1)& \phantom{0}\\
    &\oplus & \oplus &\\
    \phantom{\cdots}& Q^{\ast}_{2}\otimes_k P( L_2)& Q^{\ast}_{2}\otimes_k P(\syzygy[-1] L_2)& \phantom{0}\\
    &\oplus & \oplus &\\
    \phantom{\cdots}& \phantom{Q^{\ast}_{3}\otimes_k P( L_3)}& Q^{\ast}_{3}\otimes_k P(\syzygy[-1] L_3)\\
    \rar[shorten >=40pt,phantom]&\phantom{\oplus}\rar[shorten=35pt] & \oplus\rar[shorten <= 40pt,] &M\\
    \phantom{\cdots}& \phantom{Q^{\ast}_{4}\otimes_k P(\syzygy[] L_4)}& \phantom{Q^{\ast}_{4}\otimes_k P( L_4)}& \phantom{0}\\
    &\phantom{\oplus} & \phantom{\oplus} &\\
    \phantom{\cdots}& \phantom{Q^{\ast}_{5}\otimes_k P( L_5)}& Q^{\ast}_{5}\otimes_k P(\syzygy[-1] L_5)& \phantom{0}\\
    &\phantom{\oplus} & \oplus &\\
    \phantom{\cdots}& Q^{\ast}_{6}\otimes_k P(\syzygy[-1] L_6)& Q^{\ast}_{6}\otimes_k P(\syzygy[-2] L_6)& \phantom{0}\\
  \end{tikzcd}
\]

\section*{Acknowledgements}
The authors would like to thank Professor Naoko Kunugi for her advice and help.

\printbibliography

\vspace{\baselineskip}
\par\noindent
Shuji Fujino~~1124702@ed.tus.ac.jp\par\noindent
Department of Mathematics, Graduate School of Science, Tokyo University of Science, 1-3 Kagurazaka, Shinjuku-ku, Tokyo, 162-8601, Japan
\vspace{\baselineskip}
\par\noindent
Yuta Kozakai~~kozakai@rs.tus.ac.jp\par\noindent
Department of Mathematics, Tokyo University of Science, 1-3 Kagurazaka, Shinjuku-ku, Tokyo 162-8601, Japan
\vspace{\baselineskip}
\par\noindent
Kohei Takamura~~1122514@alumni.tus.ac.jp\par\noindent
Department of Mathematics, Graduate School of Science, Tokyo University of Science, 1-3 Kagurazaka, Shinjuku-ku, Tokyo, 162-8601, Japan

\end{document}